\newtheorem{theorem}{Theorem}[section]
\newtheorem{lemma}[theorem]{Lemma}
\newtheorem{corollary}[theorem]{Corollary}
\newtheorem{proposition}[theorem]{Proposition}
\newtheorem{conjecture}[theorem]{Conjecture}
\newtheorem{example}[theorem]{Example}
\newtheorem{remark}[theorem]{Remark}
\newtheorem{definition}[theorem]{Definition}
\newtheorem{conditions}[theorem]{Conditions}
\newtheorem{notation}[theorem]{Notation}
\numberwithin{equation}{section}
\newcommand{\HH}{\mathcal{H}}
\newcommand{\EE}{\mathcal{E}}
\newcommand{\OO}{\mathcal{O}}
\newcommand{\PZ}{\mathcal{PZ}}
\newcommand{\ZZ}{\mathcal{Z}}
\newcommand{\hprime}[1]{
  {\renewcommand{\arraystretch}{0.55}\renewcommand{\arraycolsep}{0.7pt}
  \begin{array}{ccc}
  #1
  \end{array}}
}
\newcommand{\hsq}{
  \multicolumn{2}{c}{\multirow{2}{*}{\large$\square$}}
}
\begin{document}

\newenvironment{smallarray}[1]
  {\gdef\MatName{#1}\begin{tikzpicture}
    \matrix[
    matrix of math nodes,
    row sep=-\pgflinewidth,
    column sep=-\pgflinewidth,
    nodes={inner sep=1pt,rectangle,text width=2mm,align=center},
    text depth=0mm,
    text height=2mm,
    nodes in empty cells
    ]  (#1)}
  {\end{tikzpicture}}
\def\MyZ(#1,#2){%
  \draw ([xshift=-3.6pt,yshift=3.6pt] $ (\MatName-#1-#2)!0.5!(\MatName-\the\numexpr#1+1\relax-\the\numexpr#2+1\relax) $ ) rectangle ([xshift=3.6pt,yshift=-3.6pt] $ (\MatName-#1-#2)!0.5!(\MatName-\the\numexpr#1+1\relax-\the\numexpr#2+1\relax) $ );
}

\title[Primes of Quantum $SL_3$ and Poisson-primes of $SL_3$]%
{The Prime Spectrum of Quantum $SL_3$ and the Poisson-prime Spectrum of its Semi-classical Limit}
\author{Si\^an Fryer}
\address{Department of Mathematics, University of California at Santa Barbara, California 93106, USA.} 
\email{sianfryer@math.ucsb.edu}


\maketitle

\begin{abstract}
A bijection $\psi$ is defined between the prime spectrum of quantum $SL_3$ and the Poisson prime spectrum of $SL_3$, and we verify that $\psi$ and $\psi^{-1}$ both preserve inclusions of primes, i.e. that $\psi$ is in fact a homeomorphism between these two spaces.  This is accomplished by developing a Poisson analogue of Brown and Goodearl's framework for describing the Zariski topology of spectra of quantum algebras, and then verifying directly that in the case of $SL_3$ these give rise to identical pictures on both the quantum and Poisson sides.  As part of this analysis, we study the Poisson primitive spectrum of $\mathcal{O}(SL_3)$ and obtain explicit generating sets for all of the Poisson primitive ideals.
\end{abstract}

\section{Introduction}

In the study of noncommutative quantum algebras, the theory of $\HH$-stratification (due originally to Goodearl and Letzter \cite{Hprimes2}) allows us to partition the prime spectra of various algebras into strata indexed by the prime ideals which are invariant under the rational action of some algebraic torus $\HH$ (hence the name).  The prime ideals in an individual stratum can be far more easily understood -- each stratum is homeomorphic to the prime spectrum of a commutative Laurent polynomial ring -- and under fairly mild conditions on the algebra and torus action we can even pinpoint the primitive ideals as those prime ideals which are maximal in their strata.  

A corresponding theory for Poisson algebras (usually appearing as the semi-classical limits of various quantum algebras; informally, this means we let $q \rightarrow 1$) has also been developed in \cite{Goodearl_Poisson}; we recall the main results in Section~\ref{sec:background} below.  For more information on semi-classical limits and the relationship between Poisson and quantum algebras see for example \cite{GoodearlSummary}; the definitive references for $\HH$-stratification are \cite{GBbook} for quantum algebras and \cite{Goodearl_Poisson} for Poisson algebras.

Many tools developed originally for quantum algebras (e.g. the $\HH$-stratification mentioned above, and Cauchon's deleting derivations algorithm) turn out to transfer naturally to the Poisson case with very little modification.  These two pictures -- prime ideals in quantum algebras $A$ for $q \in k^{\times}$ not a root of unity, and Poisson prime ideals in their semi-classical limits $R$ -- are often remarkably alike, and there have been many recent results exploring these similarities, for example \cite{GLL2,LaunoisLecoutre1,MR3185525}. In light of this, Goodearl makes the following conjecture in \cite{GoodearlSummary}:

\begin{conjecture}\label{conj:goodearl}
\cite[Conjecture~9.1]{GoodearlSummary} Assume that $k$ is algebraically closed of characteristic zero, and let $A$ be a generic quantized coordinate ring of an affine algebraic variety $V$ over $k$.  Then $A$ should be a member of a flat family of $k$-algebras with semiclassical limit $\OO(V)$, and there should be compatible homeomorphisms $spec(A) \longrightarrow pspec(R)$ and $prim(A) \longrightarrow pprim(R)$, where $pspec$ and $pprim$ denote the spaces of Poisson prime and Poisson primitive ideals respectively.
\end{conjecture}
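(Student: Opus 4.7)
The conjecture is stated in wide generality and appears to be genuinely open, so the plan is not a complete proof but a strategy that reduces the problem to a handful of technical verifications on both sides. The natural framework is $\HH$-stratification: on the quantum side, the Goodearl--Letzter theorem partitions $spec(A)$ into strata indexed by $\HH$-primes, and on the Poisson side the analogue from \cite{Goodearl_Poisson} partitions $pspec(R)$ into strata indexed by Poisson-$\HH$-primes, where $\HH$ acts rationally on $A$ and Poisson-rationally on $R$. Each stratum, on both sides, is homeomorphic to $spec$ of a commutative Laurent polynomial ring whose rank is determined by the corresponding $\HH$-prime. The first step is therefore to argue that the torus action on $A$ inherited from the flat family degenerates to a Poisson torus action on $\OO(V)$ in the semiclassical limit, so that both stratifications are indexed by ``the same'' combinatorial data.

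With the stratifications in hand, the second step is to build a bijection $\psi_0$ between $\HH$-primes of $A$ and Poisson-$\HH$-primes of $R$. The natural tool is Cauchon's deleting-derivations algorithm together with its Poisson analogue due to Launois and Lecoutre, which embed $A$ and $R$ into an iterated Ore extension (respectively a Poisson polynomial extension) over the same commutative base, and produce explicit parametrizations of the torus-invariant primes in terms of the same combinatorial data (Cauchon diagrams, admissible sequences, or more generally the data arising from CGL and symmetric Poisson-CGL extension structures in the sense of Goodearl--Yakimov and Goodearl--Launois). The hypothesis that $A$ is a \emph{generic} quantized coordinate ring should be used here to guarantee that this iterated extension structure is genuinely available and that the deleting-derivations algorithms are well defined; in general one must restrict to the class of $V$ for which such a presentation exists.

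The third step is to extend $\psi_0$ to a bijection $\psi \colon spec(A) \to pspec(R)$ using the stratum description: within corresponding strata, the two spaces are canonically identified with $spec$ of the same commutative Laurent polynomial ring. Compatibility with primitive ideals then follows from the Dixmier-type characterizations (Goodearl--Letzter on the quantum side, Goodearl on the Poisson side), which identify primitives with those primes that are maximal within their stratum, provided $k$ is algebraically closed of characteristic zero so that the relevant centers agree.

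The main obstacle is verifying that $\psi$ and $\psi^{-1}$ preserve inclusions; this is the entire content of the homeomorphism claim, and it is the step for which no general machinery currently exists. Containments inside a single stratum transfer automatically, but inclusions between primes lying in \emph{different} strata encode subtle geometric information about how the strata glue together in the Zariski topology, and even in low-rank cases such as $SL_3$ this requires direct case-by-case analysis (and forms the bulk of the present paper). A realistic plan is therefore to reduce the conjecture to the purely combinatorial statement that the specialization order on $\HH$-prime strata is the same on both sides, and then to attempt an induction on the length of the deleting-derivations algorithm, showing that each elimination step preserves the containment relations. Finding such a uniform argument --- or exhibiting an obstruction that forces additional hypotheses on $V$ --- is the critical problem and the point at which the proposal genuinely halts short of a proof.
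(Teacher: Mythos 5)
The statement you are asked about is a conjecture quoted from Goodearl; this paper does not prove it (and neither do you), so the only meaningful comparison is between your proposed strategy and the program the paper actually carries out for its verified cases. Your outline agrees with the paper in its first three steps: stratify both $spec(A)$ and $pspec(R)$ by ($\HH$-)primes, match the strata, identify each stratum with $spec$ of a commutative Laurent polynomial ring, and use the Dixmier--Moeglin equivalences to handle primitives. Where you diverge is at the step you correctly identify as the crux. The paper's mechanism for cross-strata inclusions is the Brown--Goodearl framework and its Poisson analogue built in Section~3: the gluing maps $\varphi_{JK}$ on closed sets are shown to equal $f_{JK}^{\circ}\overline{|}g_{JK}^{\circ}$, defined through intermediate commutative algebras $\ZZ_{JK}$ and $\PZ_{JK}$ and comorphisms of their spectra, with the sufficient condition being that every prime $P \in spec_J$ (resp.\ $pspec_J$) has $P/J$ generated by normal (resp.\ Poisson-normal) elements; for $SL_3$ this is verified via explicit generating sets for primitive ideals and a UFD argument, and the homeomorphism then follows by matching the commutative data on both sides through the map $\theta$ on products of minors.

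Two points in your plan would fail as stated. First, reducing the conjecture to ``the specialization order on $\HH$-prime strata is the same on both sides'' is strictly weaker than what is needed: the closure of a point of $spec_J(A)$ meets $spec_K(A)$ in a set determined by the map $\varphi_{JK}$ applied to that point's closure within its stratum, not merely by whether $K \supseteq J$; two homeomorphic stratified spaces with identical strata posets can still glue differently. Second, in your third step the identification of corresponding strata ``with $spec$ of the same Laurent polynomial ring'' is not canonical; for the global map to be a homeomorphism the stratum-wise identifications must be compatible with the gluing maps, which in the paper requires exhibiting isomorphisms $\ZZ(A_J) \cong \PZ(R_J)$ and $\ZZ_{JK} \cong \PZ_{JK}$ given by matching distinguished generators (products of quantum minors to products of minors) so that the relevant squares of maps commute (Corollary~\ref{res:commutativity of middle square} and Figure~\ref{fig:magnificent diagram}). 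Your proposed induction on the deleting-derivations algorithm is a reasonable candidate for producing such compatible identifications in greater generality, but as written it is an aspiration rather than an argument, and it is not the route this paper takes.
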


This conjecture has been verified for some types of quantum algebras, e.g. $q$-commuting polynomial rings and quantum affine toric varieties \cite[Theorem~4.2]{GLz2}, multiparameter quantized symplectic and Euclidean spaces \cite{MR2416732,MR2724222}, and $\OO_q(SL_2)$ \cite[Example~9.7]{GoodearlSummary}.  In addition, it is known that there are bijections $prim(A) \longrightarrow pprim(R)$ for many algebras of the form $A=\OO_q(G)$ (under various conditions on $G$ and $q$, see e.g. Joseph \cite{MR1297159}, Hodges, Levasseur, Toro \cite{MR1440253}, Yakimov \cite{MR3185525}) but the topological properties of these maps remain unknown.

The problem is as follows: while we have an excellent understanding of the individual strata in $spec(A)$ or $pspec(R)$, which is sufficient to obtain the bijections mentioned above, this tells us nothing about the interaction of primes from different strata.  A homeomorphism $\psi: spec(A) \longrightarrow pspec(R)$ and its inverse $\psi^{-1}$ must preserve inclusions between primes; if $A$ and $R$ are both noetherian and $char(k)=0$, the converse is also true \cite[Lemma~9.4]{GoodearlSummary}.  In $\OO_q(SL_2)$ the picture is simple enough that we can check these inclusions directly, but for larger algebras this quickly becomes impossible.

In \cite{GBrown}, Brown and Goodearl develop a conjectural framework to tackle this problem in the case of quantum algebras, by encoding the information about inclusions of primes from different strata in terms of certain commutative algebras and maps between them.  This commutative data can then (in theory) be computed, providing a full picture of the topology of the space $spec(A)$.  Verifying that this framework applies to a given algebra reduces to a question of normal generation of prime ideals modulo their $\HH$-prime, which is currently unknown in general but is accomplished in \cite[\S5-7]{GBrown} for the cases of $\OO_q(GL_2)$, $\OO_q(M_2)$ and $\OO_q(SL_3)$.

In Section~\ref{sec:framework} below we develop the corresponding framework for the Poisson case, and obtain a very similar picture (up to the obvious modifications of replacing ``centre'' with ``Poisson centre'', ``normal'' with ``Poisson normal'', etc).  In particular, since the quantum and Poisson pictures each reduce to understanding a finite number of commutative algebras and associated maps (commutative with respect to both the associative multiplication and the Poisson bracket), this suggests a possible approach to tackling Conjecture~\ref{conj:goodearl}: describe these commutative algebras, and the homeomorphism will follow.  As proof of concept, we carry out this program in the case of $\OO_q(SL_3)$ and $\OO_q(GL_2)$ in Sections \ref{sec:SL3 poisson stuff} and \ref{sec:homeomorphism}.

In Section~\ref{sec:SL3 poisson stuff} we verify that the results of Section~\ref{sec:framework} apply to the Poisson algebras $\OO(SL_3)$ and $\OO(GL_2)$ (with respect to the Poisson bracket induced from the corresponding quantum algebras); these results follow naturally from the examination of Poisson prime and Poisson primitive ideals in $\OO(SL_3)$ which formed part of the author's PhD thesis \cite{MeThesis}.  In Section~\ref{sec:homeomorphism} we prove the following main theorem:

\begin{theorem}\label{res:theorem homeomorphism intro} (Theorem~\ref{res:theorem homeomorphism}, Corollary~\ref{res:homeomorphism sl3 primitives theorem})
Let $k$ be an algebraically closed field of characteristic zero and $q \in k^{\times}$ not a root of unity.  Then there is a bijection $\psi: spec(\OO_q(SL_3)) \longrightarrow pspec(\OO(SL_3))$ which is a homeomorphism with respect to the Zariski topologies on each space, and which restricts to a homeomorphism $prim(\OO_q(SL_3)) \longrightarrow pprim(\OO(SL_3))$.
\end{theorem}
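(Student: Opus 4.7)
The plan is to construct $\psi$ stratum-by-stratum using the (quantum and Poisson) Brown--Goodearl frameworks, so that verifying the homeomorphism reduces to checking that the finite collection of ``commutative data'' on each side matches.

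First I would record the combinatorial backbone: $\OO_q(SL_3)$ has a known finite list of $\HH$-primes (the Cauchon/Launois picture), and by Section~\ref{sec:SL3 poisson stuff} the Poisson $\HH$-primes of $\OO(SL_3)$ form an identical list indexed in the same way. For each $\HH$-prime $J \in \HH\text{-}spec(\OO_q(SL_3))$, the Goodearl--Letzter theorem identifies the stratum $spec_J(\OO_q(SL_3))$ with $spec$ of the centre $Z_J$ of the localization $(\OO_q(SL_3)/J)[\EE_J^{-1}]$, a commutative Laurent polynomial ring; analogously, on the Poisson side, for each Poisson $\HH$-prime $P$, the stratum $pspec_P(\OO(SL_3))$ is homeomorphic to $pspec$ of the Poisson centre $Z_P^{Pois}$ of the Poisson localization, which is again a commutative Laurent polynomial ring. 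For $SL_3$ these are small enough to write down explicitly, so the first step is to fix, for each matching pair $(J,P)$, an isomorphism $Z_J \to Z_P^{Pois}$ and hence a homeomorphism $\psi_{J,P}: spec_J \to pspec_P$. Assembling these gives the set-theoretic bijection $\psi$.

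Next I would use the Brown--Goodearl framework (quantum side, from \cite{GBrown}) and its Poisson analogue developed in Section~\ref{sec:framework} to describe inclusions \emph{between} strata. For each pair $J_1 \subset J_2$ of $\HH$-primes one obtains a \emph{bridging algebra} and a map encoding exactly which primes in the $J_1$-stratum contain which primes in the $J_2$-stratum; the same holds on the Poisson side, for $P_1 \subset P_2$, with Poisson normal generators replacing normal ones. The crucial check is that the bridging data on the quantum side coincides, under the stratum isomorphisms $\psi_{J,P}$ fixed above, with the bridging data on the Poisson side. In $\OO_q(SL_3)$ this amounts to a finite (but nontrivial) verification: for each pair of $\HH$-primes one must identify the normal generators modulo the smaller $\HH$-prime, their quantum commutation data with the torus, and the resulting contraction map of commutative algebras, and check these match Poisson normal generators, Poisson brackets with the torus, and the contraction map on the Poisson side. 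Because the results of Section~\ref{sec:SL3 poisson stuff} already verify normal (respectively Poisson normal) generation of primes modulo $\HH$-primes in both settings, this step is reduced to a direct comparison of explicit generators, which I would tabulate $\HH$-prime by $\HH$-prime.

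Once the bridging data matches, a prime $Q_2$ of $\OO_q(SL_3)$ in stratum $J_2$ contains a prime $Q_1$ in stratum $J_1$ if and only if $\psi(Q_2)$ contains $\psi(Q_1)$, so both $\psi$ and $\psi^{-1}$ preserve all inclusions of primes. Since $\OO_q(SL_3)$ and $\OO(SL_3)$ are both noetherian and $char(k)=0$, \cite[Lemma~9.4]{GoodearlSummary} upgrades this inclusion-preserving bijection to a homeomorphism for the Zariski topologies, giving Theorem~\ref{res:theorem homeomorphism}. The corollary on primitive spectra (Corollary~\ref{res:homeomorphism sl3 primitives theorem}) then follows because, under the hypotheses on $k$, the Dixmier--Moeglin equivalence holds on both sides \cite{Hprimes2,Goodearl_Poisson}, so primitives are exactly the primes that are maximal within their stratum; this is an intrinsic, inclusion-theoretic condition preserved by $\psi$, so $\psi$ restricts to the desired homeomorphism $prim \to pprim$.

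The main obstacle I expect is the stratum-by-stratum comparison of bridging data: the $\HH$-stratification of $\OO_q(SL_3)$ has $36$ strata, and one must match normal/Poisson-normal generators and their commutation/bracket relations across every inclusion of $\HH$-primes. The payoff is that nothing subtle happens outside this bookkeeping --- once the commutative data is shown to agree, the framework does the rest, and the Poisson analogue from Section~\ref{sec:framework} was designed precisely so that no quantum-specific arguments are needed for the inclusion-preservation conclusion.
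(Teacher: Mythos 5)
Your proposal matches the paper's approach: build $\psi$ stratum-by-stratum from the two stratification theorems, verify that the stratum centres $\ZZ(A_J)\cong\PZ(R_J)$ and the Brown--Goodearl bridging algebras $\ZZ_{JK}$, $\PZ_{JK}$ agree under the ``preservation of notation'' map (the case-by-case content of Theorem~\ref{res:main theorem on equality of intermediate spaces}), and use the resulting commuting diagram together with \cite[Lemma~9.4]{GoodearlSummary} to conclude that $\psi$ and its restriction to primitives are homeomorphisms. The only cosmetic difference is that the paper establishes $\psi(V(P))=V(\psi(P))$ directly from the commuting diagram rather than first stating inclusion-preservation abstractly, and cites Lemma~9.4(c) for the primitive case where you invoke Dixmier--Moeglin maximality-within-stratum, which amounts to the same thing.
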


\begin{corollary} (Corollary~\ref{res:homeomorphism theorem gl2})
Let $k$, $q$ be as above.  There is a homeomorphism $\psi': spec(\OO_q(GL_2)) \longrightarrow pspec(\OO(GL_2))$, which restricts to a homeomorphism $prim(\OO_q(GL_2))\longrightarrow pprim(\OO(GL_2))$.
\end{corollary}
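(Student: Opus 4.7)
The plan is to mimic, in the simpler setting of $\OO_q(GL_2)$, the proof of Theorem~\ref{res:theorem homeomorphism intro}. By Section~\ref{sec:SL3 poisson stuff}, the Poisson framework of Section~\ref{sec:framework} has been shown to apply to $\OO(GL_2)$, and by \cite{GBrown} the quantum framework of Brown--Goodearl applies to $\OO_q(GL_2)$. Thus the task reduces to checking that the two frameworks produce identical commutative data on the two sides, just as was done for $SL_3$.

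I would first enumerate the $\HH$-primes of $\OO_q(GL_2)$ under the standard torus action of $(k^{\times})^4$ and the Poisson $\HH$-primes of $\OO(GL_2)$ under the corresponding Poisson torus action. Since $\OO_q(GL_2)$ has only four generators together with the inverse of the quantum determinant, the $\HH$-stratification is small (far fewer strata than the $SL_3$ case), and a bijection at the level of $\HH$-primes can be defined by matching generating sets entry-by-entry with their semi-classical analogues. This defines $\psi'$ on the Poisson $\HH$-spectrum; the map is then extended stratum by stratum using the Laurent-polynomial parametrizations supplied by each framework.

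The remaining verification is that the commutative data attached to corresponding $\HH$-primes agrees on the two sides: for each $\HH$-prime $Q$ of $\OO_q(GL_2)$ with image Poisson $\HH$-prime $P$ of $\OO(GL_2)$, one compares the centre (respectively the Poisson centre) of the appropriate localization of $\OO_q(GL_2)/Q$ (respectively $\OO(GL_2)/P$), together with the maps between these commutative algebras induced by inclusions $Q \subsetneq Q'$ of $\HH$-primes. The quantum determinant is central and Poisson-central on the two sides respectively, which simplifies the analysis considerably. Once these algebras and maps are shown to coincide, the abstract inclusion-preserving argument used in Section~\ref{sec:homeomorphism} for $SL_3$ applies verbatim: $\psi'$ and $(\psi')^{-1}$ both preserve inclusions between primes from different strata, so $\psi'$ is a homeomorphism. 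The restriction to primitive ideals is automatic because under the standing hypotheses on $k$ and $q$ the primitive ideals are, on both sides, exactly those primes maximal in their stratum, a property manifestly preserved by $\psi'$.

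I expect the main obstacle to be purely computational bookkeeping rather than any new conceptual input: the $\HH$-stratification of $\OO_q(GL_2)$ is small enough that the commutative data on each stratum can be written down explicitly, and then matched directly against the Poisson side. No new machinery beyond that already developed for $\OO_q(SL_3)$ should be required, which is why the statement is naturally phrased as a corollary rather than a separate theorem.
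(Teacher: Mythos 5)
Your proposal is correct in principle but takes a more laborious route than the paper, which simply observes that $\OO_q(GL_2) \cong \OO_q(SL_3)/I_{132,132}$ and $\OO(GL_2) \cong \OO(SL_3)/I_{132,132}$, so that $spec(\OO_q(GL_2))$ and $pspec(\OO(GL_2))$ are realized as the closed subspaces $V(I_{132,132})$ of $spec(\OO_q(SL_3))$ and $pspec(\OO(SL_3))$ respectively; the homeomorphism $\psi$ of Theorem~\ref{res:theorem homeomorphism} sends the quantum $\HH$-prime $I_{132,132}$ to the Poisson $\HH$-prime $I_{132,132}$ by construction (it is the identity on $\HH$-spectra), hence restricts to a homeomorphism between these closed subspaces, and the primitive-ideal statement is then inherited from Corollary~\ref{res:homeomorphism sl3 primitives theorem}. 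Your plan instead re-runs the whole machinery from scratch for $GL_2$: enumerating the four $\HH$-primes under the $(k^{\times})^4$ action, computing the centres $\ZZ(A_J)$, $\PZ(R_J)$, the intermediate algebras $\ZZ_{JK}$, $\PZ_{JK}$, and the maps $f_{JK}$, $g_{JK}$, and then checking they match. This would succeed — and indeed the required verifications already appear embedded in Case~III of the proof of Theorem~\ref{res:main theorem on equality of intermediate spaces} (which handles exactly $J = I_{132,132}$ and the three $\HH$-primes above it) and in Corollaries~\ref{res:Poisson conjecture holds for GL2} and the corresponding parts of \cite{GBrown} — but you would end up duplicating that case analysis rather than reusing the already-proved $SL_3$ theorem as a black box. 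The paper's reduction buys a two-line proof; your direct verification buys a self-contained $GL_2$ argument that does not presuppose the full $SL_3$ machinery, at the cost of an explicit (if small) computation and the need to reconcile the $(k^{\times})^4$ torus on $GL_2$ with the $(k^{\times})^5$ torus inherited from $SL_3$ (which does not change the $\HH$-primes, but should be noted).
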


While the direct computational approach used here will not be feasible for algebras of larger dimension, it does demonstrate how the techniques of \cite{GBrown} and this paper for ``patching together'' the topologies of individual strata in well-behaved quantum and Poisson algebras could be used to tackle Conjecture~\ref{conj:goodearl} in greater generality.

\section{Background and Definitions}\label{sec:background}

Throughout, fix $k$ to be an algebraically closed field of characteristic 0.  

We will restrict our attention exclusively to commutative Poisson algebras in this paper.  If $R$ is a Poisson algebra, an ideal $I$ is called a \textit{Poisson ideal} if it is also an ideal with respect to the Poisson bracket, i.e. $\{I,R\} \subseteq I$; it is called \textit{Poisson prime} if it is a Poisson ideal which is prime in the usual commutative sense\footnote{This is equivalent to the standard definition of a Poisson-prime ideal when $R$ is noetherian and $k$ has characteristic 0; all algebras considered here will be of this type.  See \cite[Lemma 1.1(d)]{Goodearl_Poisson}.}, and \textit{Poisson primitive} if it is the largest Poisson ideal contained in some maximal ideal of $R$.  We write $pspec(R)$ for the set of Poisson primes in $R$, and $pprim(R)$ for the set of Poisson primitives.

The space $pspec(R)$ naturally inherits the Zariski topology of $spec(R)$, and in turn induces a topology on $pprim(R)$.  The closed sets of these topologies are defined to be:
\[V(I):= \{P \in pspec(R): P \supseteq I\}, \qquad V_p(I):= V(P) \cap pprim(R),\]
where we may assume that $I$ is a \textit{Poisson} ideal in $R$ (if it is not, simply replace it with the intersection of all of the ideals in $V(I)$; this is a Poisson ideal that defines the same closed set as $V(I)$).  When $R$ is noetherian, the sets $\{V(P): P \in pspec(R)\}$ form a basis for this topology: this follows from the fact that $I = \bigcap_{i=1}^nP_i$, where $I$ is a Poisson ideal and $P_1, \dots, P_n$ are the finitely many Poisson prime ideals minimal over $I$ (see \cite[Lemma 1.1]{Goodearl_Poisson}).

For any topological space $T$, we will denote the set of its closed sets by $CL(T)$.

An element $z \in R$ is called \textit{Poisson central} if $\{z,a\} = 0$ for all $a \in R$; define the \textit{Poisson centre} of $R$ to be
\[\PZ(R) = \Big\{z \in R: \{z,R\} = 0\Big\}.\]
Similarly, we say that $r \in R$ is \textit{Poisson normal} if $\{r,R\} \subseteq rR$.

If $I$ is a Poisson ideal, then there is an induced Poisson structure on $R/I$ given by 
\begin{equation}\label{eq:poisson bracket on quotient}\{a + I,b+I\} = \{a,b\} + I.\end{equation}
In a similar vein, if $R$ is a domain and $X$ is a multiplicative set, then the Poisson bracket on $R$ extends uniquely to a Poisson bracket on $R[X^{-1}]$ via the formula
\begin{equation}\label{eq:extension of bracket to localization}
\{ax^{-1},by^{-1}\} = \{a,b\}x^{-1}y^{-1} - \{a,y\}bx^{-1}y^{-2} - \{x,b\}ax^{-2}y^{-1} + \{x,y\}abx^{-2}y^{-2}.
\end{equation}
The Poisson centre of $R[X^{-1}]$ can be related to $R$ as follows: 
\begin{equation}\label{eq:poisson centre of localization}\PZ(R[X^{-1}]) = \big\{ax^{-1} \in R[X^{-1}] : \{ax^{-1},R\} = 0\big\}.\end{equation}
The $\subseteq$ direction of this equality is clear, while the $\supseteq$ direction follows from \eqref{eq:extension of bracket to localization}.

The Poisson structures in which we are interested arise naturally as the semi-classical limits of various quantized coordinate rings.  However, since we will quickly narrow our attention to specific examples, we will not need the formal details of this relationship here and the interested reader is referred to \cite{GoodearlSummary}.  In this context, however, it makes sense to impose the following conditions on our Poisson algebras.

\begin{conditions}\label{conditions}\ 

\begin{enumerate}
\item $R$ is a commutative affine $k$-algebra with a Poisson bracket.
\item $\HH = (k^{\times})^r$ is an algebraic torus acting rationally on $R$ by Poisson automorphisms (see \cite[\S2]{Goodearl_Poisson}).
\item $R$ has only finitely many Poisson prime ideals which are invariant under the action of $\HH$.
\end{enumerate}
\end{conditions}

Denote by $\HH$-$pspec(R)$ the set of prime ideals in $R$ which are simultaneously Poisson ideals and invariant under the action of $\HH$; by \cite[Lemma 3.1]{Goodearl_Poisson}, these are precisely the Poisson $\HH$-prime ideals.  Although we will often refer to these ideals simply as ``$\HH$-primes'', in the context of Poisson algebras we will always mean the \textit{Poisson} $\HH$-primes only.

\begin{remark}\label{rem:homogeneous elements}
Under our assumptions on $\HH$ and $k$, rational $\HH$-actions on $R$ correspond to gradings by the rational character group $X(H) \cong \mathbb{Z}^r$, with $\HH$-eigenvectors corresponding to non-zero homogeneous elements (see e.g. \cite[II.2]{GBbook}, \cite[\S2]{Goodearl_Poisson}).  This will be the only grading we consider here, and the terms ``eigenvectors'' and ``homogeneous elements'' will be used interchangably. 
\end{remark}

The $\HH$-primes induce partitions on $pspec(R)$ and $pprim(R)$ as follows: for $J \in \HH$-$pspec(R)$, define
\begin{gather*}
pspec_J(R) = \left\{P \in pspec(R) : \bigcap_{h \in \HH} h(P) = J\right\};\\
pprim_J(R) = pspec_J(R) \cap pprim(R).
\end{gather*}
For many algebras (in particular, those satisfying Conditions~\ref{conditions}) the topology of the individual strata can be understood using the following theorem:

\begin{theorem}[Poisson Stratification Theorem and Dixmier-Moeglin Equivalence]\label{res:poisson stratification} \cite[Theorems 4.2, 4.3, 4.4]{Goodearl_Poisson} Let $R$ be a commutative noetherian Poisson algebra, $k$ an algebraically closed field of characteristic 0, and $\HH = (k^{\times})^r$ an algebraic torus acting rationally on $R$ by Poisson automorphisms.  For $J \in \HH$-$pspec(R)$, we have:
\begin{enumerate}
\item The localization of $R/J$ at the set $\EE_J$ of homogeneous non-zero elements in $R/J$ is again a Poisson algebra upon which $\HH$ acts rationally, and $R_J:= R/J[\EE_J^{-1}]$ is a Poisson $\HH$-simple ring, i.e. admits no non-zero Poisson $\HH$-primes.  The Poisson centre $\PZ(R_J)$ is a Laurent polynomial ring over $k$ in at most $r$ indeterminates.
\item $pspec_J(R)$ is homeomorphic to $pspec(R_J)$ via localization and contraction, and $pspec(R_J)$ is in turn homeomorphic to $spec(\PZ(R_J))$ via contraction and extension.
\item If $\HH$-$pspec(R)$ is finite and $R$ is an affine $k$-algebra, then the Poisson primitive ideals in $pprim_J(R)$ correspond to maximal ideals in $\PZ(R_J)$, and the homeomorphisms above restrict to homeomorphisms $pprim_J(R) \approx pprim(R_J) \approx max(\PZ(R_J))$.
\end{enumerate}
\end{theorem}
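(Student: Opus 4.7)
The plan is to establish the three parts in order, mirroring the strategy used for the quantum $\HH$-stratification theorem in \cite{GBbook}, with the associative product replaced by the Poisson bracket in the appropriate places.

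For~(1), I would begin by noting that since $J$ is Poisson $\HH$-prime, the quotient $R/J$ inherits a Poisson domain structure and a rational $\HH$-action via~\eqref{eq:poisson bracket on quotient}, and $\EE_J$ is an $\HH$-stable multiplicative set. The extension formula~\eqref{eq:extension of bracket to localization} then gives $R_J := (R/J)[\EE_J^{-1}]$ a Poisson bracket compatible with the $\HH$-action. Poisson $\HH$-simplicity follows because any non-zero Poisson $\HH$-ideal of $R_J$ is spanned by its $\HH$-eigenvectors, and after clearing denominators any such eigenvector represents an element of $\EE_J$, which is already a unit. For the Poisson centre, $\PZ(R_J)$ is $\HH$-stable and graded by $X(\HH) \cong \mathbb{Z}^r$: if $z \in \PZ(R_J)$ is a non-zero homogeneous element, then $zR_J$ is a non-zero Poisson $\HH$-ideal (using Poisson centrality of $z$ to check the Leibniz condition), hence equals $R_J$ by $\HH$-simplicity, so $z$ is invertible. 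A commutative graded domain in which every non-zero homogeneous element is a unit is then, by a standard cocycle-trivialization argument in characteristic zero, a Laurent polynomial ring over $k$ in at most $r$ indeterminates.

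For~(2), the first homeomorphism $pspec_J(R) \approx pspec(R_J)$ is the standard extension--contraction bijection: a Poisson prime $P \supseteq J$ lies in $pspec_J(R)$ exactly when its $\HH$-core is $J$, equivalently when its image in $R/J$ is disjoint from $\EE_J$. The second homeomorphism $pspec(R_J) \approx spec(\PZ(R_J))$ is given by the maps $P \mapsto P \cap \PZ(R_J)$ and $\mathfrak{p} \mapsto \mathfrak{p} R_J$. The two nontrivial steps are (a) showing that $\mathfrak{p} R_J$ is always Poisson prime with $(\mathfrak{p} R_J) \cap \PZ(R_J) = \mathfrak{p}$, via a leading--term argument in the Laurent polynomial grading of $\PZ(R_J)$; and (b) showing that every Poisson prime $P$ of $R_J$ is generated by $P \cap \PZ(R_J)$, which I would approach by passing to a further localization inverting a well-chosen homogeneous element, showing the resulting quotient has trivial Poisson centre, and using $\HH$-simplicity together with uniqueness of minimal Poisson primes to recover $P$.

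Finally for~(3), the Dixmier--Moeglin correspondence at the stratum level follows from~(2) combined with the Nullstellensatz: under Conditions~\ref{conditions} the algebra $R$ is affine, hence its maximal ideals correspond to $k$-points, and Poisson primitives are the Poisson cores of maximal ideals. A diagram chase through the homeomorphisms of~(2) then shows that a Poisson prime in $pspec_J(R)$ is Poisson primitive if and only if the corresponding prime of $\PZ(R_J)$ is maximal, with the finiteness hypothesis on $\HH$-$pspec(R)$ guaranteeing that the Jacobson-type arguments go through uniformly across all strata. The main obstacle, I expect, is step~(b) above: that every Poisson prime of $R_J$ is generated by its intersection with the Poisson centre is the structural heart of the theorem and requires careful coordination of the Poisson bracket, the $\HH$-grading, and the $\HH$-simplicity of $R_J$.
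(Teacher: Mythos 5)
The paper does not prove this theorem---it is quoted, with attribution, from \cite[Theorems 4.2--4.4]{Goodearl_Poisson} and used as a black box throughout---so there is no ``paper's own proof'' against which to compare your attempt; the relevant comparison is with Goodearl's original argument. Your outlines of parts (1) and (3) are broadly sound. For (1): $\HH$-simplicity because any non-zero Poisson $\HH$-ideal of $R_J$ contains, after clearing denominators, a homogeneous element of $\EE_J$, hence a unit; Laurent structure of $\PZ(R_J)$ because its non-zero homogeneous elements generate $\HH$-stable Poisson ideals and are therefore units. (One small remark: since $\PZ(R_J)$ is commutative, the $2$-cocycle can be trivialized directly by choosing a monomial basis along a $\mathbb{Z}$-basis of the support subgroup; the characteristic-zero hypothesis does no work at that step.) Part (3) as you describe it is the expected deduction from (2) together with the Nullstellensatz and the finiteness of $\HH$-$pspec(R)$.

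The genuine gap is in your step (2)(b), exactly as you anticipate. Your proposal---localize further so the Poisson centre of the quotient becomes trivial, then invoke $\HH$-simplicity and uniqueness of minimal Poisson primes---runs in the wrong direction. Once enough homogeneous elements have been inverted to kill the Poisson centre, the contraction $P \cap \PZ$ carries no information and cannot be used to recover $P$; and $\HH$-simplicity is already in hand for $R_J$ itself, so a further localization buys nothing new. The argument in \cite{Goodearl_Poisson} (paralleling the quantum prototype in \cite[II.3]{GBbook}) works with $T := R_J$ directly: one shows both that extension $Q \mapsto QT$ sends primes of $Z := \PZ(T)$ to Poisson primes of $T$ with $QT \cap Z = Q$, and that contraction $P \mapsto P \cap Z$ is its inverse, by exploiting the $X(\HH)$-grading on $T$ and its structure as a graded $Z$-module over the $\HH$-simple ring. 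That structural lemma is the heart of the theorem, and the localization-to-trivial-centre route cannot reach it.
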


While this allows us to understand each stratum individually, it carries no information about interactions between different strata.  In order to understand the topology of the Poisson spectrum as a whole, some additional tools will be required.

\begin{definition}\label{def:finite stratification}
Let $T$ be a topological space and $\Pi$ a poset.  A partition $T = \bigsqcup_{i \in \Pi} S_i$ is called a \textit{finite stratification} of $T$ if $\Pi$ is finite and
\begin{enumerate}
\item Each $S_i$ is nonempty and locally closed in $T$;
\item $\overline{S_i} = \bigsqcup_{j \in \Pi, \ j \geq i} S_j$, where $\overline{S_i}$ denotes the closure of $S_i$ in $T$.
\end{enumerate}
\end{definition}

If $T = \bigsqcup_{i \in\Pi}S_i$ is a finite stratification of a topological space $T$ then we can equip each $S_i$ with the induced topology from $T$ and, for any pair $i$, $j \in \Pi$, define a map $CL(S_i)\rightarrow CL(S_j)$ by 
\begin{equation}\label{eq:definition of varphi}\varphi_{ij}(Y) = \overline{Y}\cap S_j.\end{equation}
(Here $\overline{Y}$ denotes the closure of $Y$ in $T$.)  By property (ii) of Definition~\ref{def:finite stratification} above this map will only be interesting when $i < j$, so we will often focus only on this case.  Given a finite stratification of a topological space $T$, and its associated family of maps $\{\varphi_{ij}: i, j \in \Pi,\ i < j\}$, we can deduce some information about the structure of $T$ as follows:

\begin{lemma}\label{res:properties of finite stratification} \cite[Lemma 2.3]{GBrown}
Let $T$ be a topological space with a finite stratification $T = \bigsqcup_{i \in \Pi} S_i$, and let $\{\varphi_{ij} : i < j \in \Pi\}$ be the collection of maps defined in \eqref{eq:definition of varphi}.
\begin{enumerate}
\item Each $\varphi_{ij}$ satisfies $\varphi_{ij}(\emptyset) = \emptyset$ and $\varphi_{ij}(S_i) = S_j$.
\item Each $\varphi_{ij}$ preserves finite unions.
\item A subset $X \subseteq T$ is closed in $T$ if and only if
\begin{enumerate}
\item $X \cap S_i \in CL(S_i)$ for all $i \in \Pi$; and
\item $\varphi_{ij}(X\cap S_i) \subseteq X \cap S_j$ for all $i < j \in \Pi$.
\end{enumerate}
\end{enumerate}
\end{lemma}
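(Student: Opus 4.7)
The plan is to work directly from the definitions. Parts (i) and (ii) are essentially formal: $\varphi_{ij}(\emptyset) = \overline{\emptyset}\cap S_j = \emptyset$ is immediate, and $\varphi_{ij}(S_i) = \overline{S_i}\cap S_j = S_j$ follows from property (ii) of Definition~\ref{def:finite stratification}, since $\overline{S_i} = \bigsqcup_{\ell \geq i} S_\ell$ contains $S_j$ whenever $j \geq i$. Preservation of finite unions in (ii) is the standard identity $\overline{Y_1 \cup Y_2} = \overline{Y_1} \cup \overline{Y_2}$, intersected with $S_j$.

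The real content is part (iii). For the forward direction, assume $X$ is closed in $T$. Then $X\cap S_i$ is closed in the subspace topology on $S_i$, giving (a); and $\overline{X\cap S_i} \subseteq \overline{X} = X$, so $\varphi_{ij}(X\cap S_i) = \overline{X\cap S_i}\cap S_j \subseteq X\cap S_j$, giving (b).

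For the reverse direction, assume (a) and (b). Because $\Pi$ is finite, $\overline{X} = \bigcup_{i \in \Pi} \overline{X\cap S_i}$, so it suffices to show $\overline{X\cap S_i} \subseteq X$ for each $i$. The key observation is that $\overline{X\cap S_i} \subseteq \overline{S_i} = \bigsqcup_{j \geq i} S_j$, so we may decompose
\[
\overline{X\cap S_i} \;=\; \bigsqcup_{j \geq i}\bigl(\overline{X\cap S_i}\cap S_j\bigr).
\]
For the diagonal term $j=i$, hypothesis (a) supplies a closed set $C\subseteq T$ with $X\cap S_i = C\cap S_i$, whence $\overline{X\cap S_i}\cap S_i \subseteq C\cap S_i = X\cap S_i$. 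For each off-diagonal term $j > i$, the set $\overline{X\cap S_i}\cap S_j$ is exactly $\varphi_{ij}(X\cap S_i)$, which lies in $X\cap S_j$ by (b). Summing over $j \geq i$ yields $\overline{X\cap S_i} \subseteq X$, and hence $\overline{X} = X$.

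The only place any thought is required is the bookkeeping in the reverse direction of (iii): one must exploit the disjointness of the stratification together with the fact that $\overline{X\cap S_i}$ is trapped inside $\overline{S_i} = \bigsqcup_{j \geq i} S_j$, so that the hypotheses (a) and (b) handle the diagonal and off-diagonal pieces respectively. Finiteness of $\Pi$ enters only to commute closure with the union $\bigcup_i (X\cap S_i)$; everything else is formal manipulation of closures and induced topologies.
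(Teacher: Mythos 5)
Your argument is correct and, as far as I can tell, essentially the only sensible direct proof; the paper itself does not reproduce the argument but simply cites \cite[Lemma 2.3]{GBrown}, and what you have written matches the obvious route one would take there. The one step worth highlighting as well-handled is the reverse direction of part (iii): you correctly reduce to showing $\overline{X\cap S_i}\subseteq X$ for each $i$ (using finiteness of $\Pi$ to commute closure with the finite union $X=\bigcup_i(X\cap S_i)$), and then exploit the containment $\overline{X\cap S_i}\subseteq\overline{S_i}=\bigsqcup_{j\geq i}S_j$ together with the disjointness of the strata to split $\overline{X\cap S_i}$ into a diagonal piece (handled by (a), since $X\cap S_i=C\cap S_i$ for some closed $C$ forces $\overline{X\cap S_i}\subseteq C$) and off-diagonal pieces (handled by (b), since $\overline{X\cap S_i}\cap S_j=\varphi_{ij}(X\cap S_i)$). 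One minor point you gloss over: applying $\varphi_{ij}$ to $X\cap S_i$ presupposes $X\cap S_i\in CL(S_i)$, which in the reverse direction is exactly hypothesis (a) and in the forward direction is exactly what you just established; it would be worth one phrase to flag this so the reader sees the domain of $\varphi_{ij}$ is respected, but this is cosmetic.
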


Understanding the topology of the spaces $S_i$ and the action of the maps $\varphi_{ij}$ is enough to completely describe the topology of $T$.  The aim of Section~\ref{sec:framework} is to give (in certain particularly nice settings) an alternative definition of the $\varphi_{ij}$ which does not rely on already knowing the topology of $T$ itself.  First, we look at how the spaces $pspec(R)$ and $pprim(R)$ fit into this finite stratification framework:

\begin{lemma}\label{res:pspec has a finite stratification}
Let $R$ be a commutative Poisson algebra satisfying Conditions~\ref{conditions}.  The partitions
\[pspec(R) =  \mkern-27mu\bigsqcup_{J \in \HH\textrm{-}pspec(R)} \mkern-27mu pspec_J(R) \qquad \textrm{ and }\qquad pprim(R) = \mkern-27mu\bigsqcup_{J \in \HH\textrm{-}pspec(R)}\mkern-27mu pprim_J(R)\]
form finite stratifications of the topological spaces $pspec(R)$ and $pprim(R)$ respectively.
\end{lemma}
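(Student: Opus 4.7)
The plan is to take the indexing poset $\Pi = \HH\textrm{-}pspec(R)$, ordered by inclusion of ideals; this is finite by Conditions~\ref{conditions}(iii), so the cardinality hypothesis of Definition~\ref{def:finite stratification} is automatic. I would verify the remaining clauses for $pspec(R)$ first, then adapt the argument to $pprim(R)$.

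For $pspec(R)$, nonemptiness of each stratum is immediate: since $J$ is $\HH$-invariant, $\bigcap_{h \in \HH} h(J) = J$, and hence $J \in pspec_J(R)$. The main computation is the closure formula $\overline{pspec_J(R)} = V(J)$. One containment is clear: $V(J)$ is closed and contains $pspec_J(R)$, because every $P$ in the stratum satisfies $P \supseteq \bigcap_h h(P) = J$. For the reverse, the closure of $pspec_J(R)$ equals $V(I)$ where $I = \bigcap_{P \in pspec_J(R)} P$; since $J$ itself lies in the stratum, $I \subseteq J$, while the previous observation gives $J \subseteq I$, forcing $I = J$. A direct unpacking of the definitions then yields $V(J) = \bigsqcup_{J' \supseteq J} pspec_{J'}(R)$: any prime $P \supseteq J$ has $\bigcap_h h(P)$ equal to an $\HH$-invariant Poisson prime containing $J$, hence to some $J' \in \HH\textrm{-}pspec(R)$ with $J' \supseteq J$; the reverse containment is trivial. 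Local closedness drops out for free: $pspec_J(R) = V(J) \setminus \bigcup_{J' \supsetneq J} V(J')$ is closed minus a finite union of closed sets.

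For $pprim(R)$, nonemptiness comes directly from Theorem~\ref{res:poisson stratification}(iii): $pprim_J(R) \approx \max(\PZ(R_J))$, and $\PZ(R_J)$ is a Laurent polynomial ring over $k$, which certainly has maximal ideals. As before the closure formula reduces to $V_p(J) \subseteq \overline{pprim_J(R)}^{pprim(R)}$, and I would approach this by first showing that $pprim_J(R)$ is dense in $pspec_J(R)$ with the subspace topology. Transferring through the two homeomorphisms of Theorem~\ref{res:poisson stratification}, this is the statement that $\max(\PZ(R_J))$ is dense in $spec(\PZ(R_J))$, which holds because $\PZ(R_J)$ is an affine commutative $k$-algebra and hence Jacobson. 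Upgrading this density to the ambient space: given any Poisson ideal $I$ with $V_p(I) \supseteq pprim_J(R)$, the set $V(I) \cap pspec_J(R)$ is closed in $pspec_J(R)$ and contains the dense subset $pprim_J(R)$, so it equals all of $pspec_J(R)$; in particular $J \in V(I)$, giving $I \subseteq J$ and $V_p(J) \subseteq V_p(I)$. The decomposition $V_p(J) = \bigsqcup_{J' \supseteq J} pprim_{J'}(R)$ is obtained by intersecting the analogous decomposition of $V(J)$ with $pprim(R)$, and local closedness again follows from the closure formula.

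The main obstacle is the density step for $pprim_J(R) \subseteq pspec_J(R)$: everything else amounts to bookkeeping about $\HH$-invariant primes and elementary Zariski closure computations, but the Jacobson argument is the one place where the affine hypothesis on $R$ and the explicit description of $\PZ(R_J)$ from Theorem~\ref{res:poisson stratification}(i) are genuinely used.
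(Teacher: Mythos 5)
Your proof is correct. For the $pspec(R)$ half it is essentially the paper's argument: nonemptiness via $J \in pspec_J(R)$, the closure formula $\overline{pspec_J(R)} = V(J)$, the decomposition $V(J) = \bigsqcup_{K \supseteq J} pspec_K(R)$, and local closedness $pspec_J(R) = V(J)\setminus\bigsqcup_{K\supsetneq J}V(K)$. For the $pprim(R)$ half, you take a slightly different route. The paper simply cites \cite[Lemma 1.1]{Goodearl_Poisson} to get that $J = \bigcap_{P \in pprim_J(R)} P$ (using that $R$ is affine), from which the closure formula $\overline{pprim_J(R)} = V_p(J)$ falls out in one line, and nonemptiness is immediate since a proper ideal cannot be an empty intersection. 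You instead establish density of $pprim_J(R)$ in $pspec_J(R)$ by transferring through the homeomorphisms of Theorem~\ref{res:poisson stratification} to $\max(\PZ(R_J)) \subseteq spec(\PZ(R_J))$ and invoking that a Laurent polynomial ring over a field is Jacobson, then upgrade to the ambient closure. This is valid and arguably more self-contained, but it is really the same fact in disguise: Goodearl's Lemma 1.1 is proved by exactly this kind of Nullstellensatz argument. The paper's route is shorter because it outsources that step to a single citation; yours makes the mechanism explicit at the cost of invoking the full Stratification Theorem machinery. Your observation at the end is apt: the density (equivalently, the identity $J = \bigcap_{P\in pprim_J(R)} P$) is indeed the one place where affineness of $R$ is genuinely used.
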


\begin{proof}
Under the assumptions of Conditions~\ref{conditions}, $\HH$-$pspec(R)$ is a finite set, and we can view it as a poset with respect to inclusion of ideals.  By standard properties of the $\HH$-strata and the Zariski topology on $pspec(R)$,  for each $J \in \HH$-$pspec(R)$ we have:
\begin{gather*}
J \in pspec_J(R),\\
pspec_J(R) = V(J) \Big\backslash \Big(\bigsqcup_{\substack{K \in \HH\textrm{-}pspec(R), \\ K \supsetneqq J}} \mkern-27mu V(K)\Big),\\
\overline{pspec_J(R)} = V(J) = \mkern-27mu \bigsqcup_{\substack{K \in \HH\textrm{-}pspec(R),\\ K \supseteq J}} \mkern-27mu pspec_K(R);
\end{gather*}
i.e. each $pspec_J(R)$ is nonempty, locally closed, and satisfies the closure condition given in Definition~\ref{def:finite stratification}.  The claim for $pprim(R)$ follows similarly, using the fact that $R$ is affine and so $J$ is precisely the intersection of the Poisson primitive ideals in $pprim_J(R)$ (e.g. \cite[Lemma 1.1]{Goodearl_Poisson}).
\end{proof}

If $R$ is a Poisson algebra as in Lemma~\ref{res:pspec has a finite stratification} and $P \in pspec_J(R)$ for some Poisson $\HH$-prime $J$, it follows that
\[V(P) = \mkern-18mu \bigsqcup_{\substack{K \in \HH\textrm{-}pspec,\\ K \supseteq J}} \mkern-18mu \varphi_{JK}\Big(V(P) \cap pspec_J(R)\Big);\]
this is because the $\subseteq$ in Lemma~\ref{res:properties of finite stratification}$(3)(ii)$ becomes an equality when $X = \overline{\{x\}}$ for some $x \in S_i$.

In other words, in order to understand inclusions of primes in $pspec(R)$ it is enough to find an equivalent definition of the $\varphi_{JK}$ which does not depend on already knowing the topology of $pspec(R)$.  In \cite{GBrown} Brown and Goodearl examine the corresponding problem for non-commutative quantum algebras $A$, which admit a similar stratification by $\HH$-primes; the interested reader is referred to \cite[II.1-2]{GBbook} for the background and definitions in this setting.  They develop a framework for understanding the action of the maps $\varphi_{JK}$ in the setting of $A$ in terms of certain commutative ``intermediate'' algebras $\ZZ_{JK}$ (see \cite[Definition~3.8]{GBrown}) and maps between them of the form
\[f_{JK}: \ZZ_{JK} \longrightarrow \ZZ(A_K), \qquad g_{JK}: \ZZ_{JK} \longrightarrow \ZZ(A_J),\]
where $A_J$ denotes the localization of $A/J$ at the set of all homogeneous $\HH$-eigenvectors as before, and $\ZZ(-)$ denotes the centre.  The maps $f_{JK}$ and $g_{JK}$ are homomorphisms of commutative $k$-algebras, and so they induce morphisms of schemes
\[f_{JK}^{\circ}: spec(\ZZ(A_K)) \longrightarrow spec(\ZZ_{JK}), \qquad g_{JK}^{\circ}: spec(\ZZ(A_J)) \longrightarrow spec(\ZZ_{JK}).\]
In \cite{GBrown}, Brown and Goodearl make the following general conjecture:

\begin{conjecture}\label{conj:GB conjecture}\cite[Conjecture 3.11]{GBrown} Let $A$ be a quantum algebra satisfying \cite[Assumptions~3.1]{GBrown}, let $J \subset K$ be $\HH$-primes, and let $f_{JK}$ and $g_{JK}$ be defined as in \cite[\S3]{GBrown}.  Then
\begin{equation}\label{eq:statement of conjecture in quantum case}\varphi_{JK} = f_{JK}^{\circ}\overline{|}g_{JK}^{\circ},\end{equation}
where
\begin{equation}\label{eq:definition of overline notation}f_{JK}^{\circ}\overline{|}g_{JK}^{\circ}(Y) := f_{JK}^{\circ \ -1}\left(\overline{g_{JK}^{\circ}(Y)}\right), \quad Y \in CL\big(spec(\ZZ(A_J))\big).\end{equation}
\end{conjecture}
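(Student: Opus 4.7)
The plan is to reduce the equality \eqref{eq:statement of conjecture in quantum case} of set-valued maps on $CL(spec(\ZZ(A_J)))$ to a statement about individual pairs of primes, and then use the intermediate algebra $\ZZ_{JK}$ to translate the topological containment $Q \supseteq P$ into an algebraic condition carried by $f_{JK}$ and $g_{JK}$.

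First I would check that both sides of \eqref{eq:statement of conjecture in quantum case} preserve finite unions: the left side does by Lemma~\ref{res:properties of finite stratification}(2), and the right side does because scheme-theoretic preimages commute with arbitrary unions and Zariski closures commute with finite unions. Since every closed subset of the noetherian scheme $spec(\ZZ(A_J))$ is a finite union of irreducible closed sets $\overline{\{\mathfrak{p}\}}$, it suffices to verify \eqref{eq:statement of conjecture in quantum case} on such $Y$. Under the $\HH$-stratification homeomorphism $spec_J(A) \approx spec(\ZZ(A_J))$ coming from the quantum analogue of Theorem~\ref{res:poisson stratification}, each $\mathfrak{p}$ corresponds to some $P \in spec_J(A)$, and $\varphi_{JK}(\overline{\{\mathfrak{p}\}})$ records exactly those $Q \in spec_K(A)$ with $Q \supseteq P$, re-expressed inside $spec(\ZZ(A_K))$.

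The core of the argument is then to show that the containment $Q \supseteq P$ is completely detected by pulling back along $f_{JK}$ and $g_{JK}$ to $\ZZ_{JK}$. One implication is essentially formal: because $f_{JK}$ and $g_{JK}$ arise naturally from the quotient/localization constructions defining $A_J$ and $A_K$, any containment $Q \supseteq P$ forces the corresponding containment of pullbacks in $\ZZ_{JK}$, and $f_{JK}^{\circ}(Q)$ therefore lies in $\overline{g_{JK}^{\circ}(\mathfrak{p})}$. The reverse inclusion — every $\mathfrak{q}$ whose pullback dominates $\mathfrak{p}\cap \ZZ_{JK}$ actually comes from some $Q \supseteq P$ — is the delicate part, since it asks that no prime inclusions be ``invisible'' to $\ZZ_{JK}$.

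The main obstacle is precisely this reverse implication, which is the \emph{normal generation} condition flagged in the introduction: one needs each $P/J$ to be generated as a two-sided ideal of $A/J$ by elements which become normal after inverting the $\HH$-eigenvectors, so that their images populate $\ZZ_{JK}$ richly enough to recover $P$ from $P\cap\ZZ_{JK}$. This property is currently open in general but is established case-by-case in \cite[\S5--7]{GBrown} for $\OO_q(GL_2)$, $\OO_q(M_2)$ and $\OO_q(SL_3)$. For the algebras considered in this paper, my approach would be to enumerate the finite set of (Poisson) $\HH$-primes, exhibit an explicit (Poisson-)normal generating set for each $P$ modulo its $\HH$-prime whose elements lift into $\ZZ_{JK}$, and then verify \eqref{eq:statement of conjecture in quantum case} by direct inspection of the resulting diagram of commutative algebras.
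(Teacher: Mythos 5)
The statement you are being asked about is Brown--Goodearl's Conjecture~3.11, which is genuinely open: neither this paper nor the literature offers a general proof. The paper merely cites it and then records (in the unnumbered proposition immediately following) the Brown--Goodearl sufficient criterion, namely that the conjecture holds whenever each $P/J$ admits a generating set of normal elements in $A/J$, a condition verified case-by-case in \cite[\S5--7]{GBrown} for $\OO_q(GL_2)$, $\OO_q(M_2)$, $\OO_q(SL_3)$. Your proposal correctly recognizes this: you do not claim a proof, you identify the reduction (via preservation of finite unions on both sides) to irreducible closed sets, the formal forward inclusion $\varphi_{JK}(Y) \subseteq f_{JK}^{\circ}\overline{|}g_{JK}^{\circ}(Y)$, and the fact that the reverse inclusion is exactly where normal generation enters. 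This is precisely the structure of the Poisson analogue the paper develops in Section~3 (Lemma~\ref{res:first description of f|g action}, Propositions~\ref{res:condition for maps equal when J=0}, \ref{res:main equality of maps result}, and \ref{res:conjecture satisfied if normal generation of primes}), which in turn mirrors the quantum argument of \cite[\S4]{GBrown}.

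One small stylistic difference worth noting: you reduce directly to irreducible closed subsets $\overline{\{\mathfrak{p}\}}$ of $spec(\ZZ(A_J))$, i.e.\ to primes $P$ that already lie in $spec_J(A)$, whereas the paper's Poisson version (Proposition~\ref{res:main equality of maps result}) reduces to sets $V(P)\cap pspec_J(R)$ with $P$ a prime minimal over an arbitrary Poisson ideal $I$, so $P$ may fail to lie in $pspec_J(R)$, and that extra case must be disposed of separately (it gives the empty set). Your route through the homeomorphism $spec_J(A)\approx spec(\ZZ(A_J))$ avoids this bookkeeping and is slightly cleaner. In summary, your proposal is accurate: it is not, and cannot be, a proof of the conjecture, but it correctly isolates normal generation of $P/J$ as the crux and correctly describes how the paper (and \cite{GBrown}) leverage that condition to verify the conjecture in the specific examples at hand.
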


\begin{proposition}\cite[Proposition~4.2]{GBrown}
Let $A$ be a quantum algebra satisfying \cite[Assumptions~3.1]{GBrown}.  Suppose that for each $\HH$-prime $J$ in $\HH$-$spec(A)$ and for every $P \in spec_J(A)$, $P/J$ has a generating set consisting of normal elements in $A/J$.  Then Conjecture~\ref{conj:GB conjecture} holds in $A$.
\end{proposition}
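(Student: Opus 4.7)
The plan is to verify $\varphi_{JK} = f_{JK}^{\circ}\overline{|}g_{JK}^{\circ}$ by reducing both sides to their behaviour on closures of single primes, and then using the normal-generation hypothesis to reconstruct each prime $P\in spec_J(A)$ from its image in the commutative centre $\mathcal{Z}(A_J)$.

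First I would reduce to principal closures. Both $\varphi_{JK}$ and $f_{JK}^{\circ}\overline{|}g_{JK}^{\circ}$ preserve finite unions (the former by Lemma~\ref{res:properties of finite stratification}(2), the latter because scheme-theoretic images and preimages behave well under finite unions of closed sets), and every closed subset of the noetherian space $spec_J(A)$ is a finite union of closures $\overline{\{P\}}\cap spec_J(A)$. So it suffices to prove the equality on $Y=\overline{\{P\}}\cap spec_J(A)$ for an arbitrary $P\in spec_J(A)$. Under the quantum analogue of Theorem~\ref{res:poisson stratification}, which gives homeomorphisms $spec_J(A)\approx spec(\mathcal{Z}(A_J))$ and $spec_K(A)\approx spec(\mathcal{Z}(A_K))$ (with $P\leftrightarrow P_J$ and $Q\leftrightarrow Q_K$), the desired statement becomes
\[Q\supseteq P \iff f_{JK}^{\circ}(Q_K)\in \overline{g_{JK}^{\circ}(\overline{\{P_J\}})}\]
for all $Q\in spec_K(A)$.

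The forward direction is essentially formal: an inclusion $Q\supseteq P$ induces a compatible inclusion of contracted ideals in the intermediate algebra $\mathcal{Z}_{JK}$, and the definitions of $f_{JK}$ and $g_{JK}$ translate this into the assertion that $f_{JK}^{\circ}(Q_K)$ belongs to the closure of $g_{JK}^{\circ}(P_J)$ in $spec(\mathcal{Z}_{JK})$. The reverse direction is where the normal-generation hypothesis is essential. Writing $P/J=\langle x_1,\dots,x_n\rangle$ with each $x_i$ normal in $A/J$, I would show that after passing to the localization $A_J$ each $x_i$ can be rescaled by a suitable $\HH$-eigenvector to land in $\mathcal{Z}(A_J)$: normality of $x_i$ together with the $\HH$-simplicity of $A_J$ forces its skew-commutation character to be trivialized by an invertible element of the localization, and this rescaling is precisely the construction underlying $\mathcal{Z}_{JK}$. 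The pullback hypothesis then pushes each rescaled $x_i$ into the ideal of $\mathcal{Z}(A_K)$ corresponding to $Q_K$; clearing the eigenvector denominators yields $x_i\in Q$, and since the $x_i$ generate $P$ modulo $J\subseteq Q$ we conclude $P\subseteq Q$.

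I expect the main obstacle to be the bookkeeping in the reverse direction: a normal element becomes central only after an appropriate localization-and-rescaling step, and one must verify carefully that this step is precisely what is encoded in the definitions of $\mathcal{Z}_{JK}$, $f_{JK}$ and $g_{JK}$ in \cite{GBrown}, so that the commutative pullback condition translates back faithfully into membership in $Q$. Without normal generation there is no mechanism by which the commutative data $\mathcal{Z}_{JK}$ can detect the specific prime $P$, which explains why this hypothesis is unavoidable for the conjecture to follow.
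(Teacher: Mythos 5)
Your reduction to closed sets of the form $\overline{\{P\}}\cap spec_J(A)$, the further reduction to $J=0$, and the identification of a rescaling trick as the role of normal generation all match the strategy in \cite[\S4]{GBrown}, which Section~\ref{sec:framework} of the present paper mirrors on the Poisson side. The ``forward direction'' you call formal is indeed the unconditional containment $\varphi_{JK}(Y)\subseteq f_{JK}^{\circ}\overline{|}g_{JK}^{\circ}(Y)$.

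The gap is in the rescaling step. You claim each normal generator $x_i$ can be rescaled by an $\HH$-eigenvector so as to land in $\ZZ(A_J)$, citing $\HH$-simplicity of $A_J$, and then assert that ``this rescaling is precisely the construction underlying $\ZZ_{JK}$.'' But $\ZZ_{JK}$ is the centre of a \emph{strictly smaller} localization than $A_J$: the allowed denominators form $\EE_{JK}$ (homogeneous elements of $A/J$ that are also regular modulo $K$), not all of $\EE_J$. An element of $\ZZ(A_J)$ need not lie in $\ZZ_{JK}$, and the machinery of $f_{JK}^{\circ}\overline{|}g_{JK}^{\circ}$ only ``sees'' $\ZZ_{JK}$. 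The mechanism that actually works is not $\HH$-simplicity but the following concrete fact: write the normal element $f$ as a sum of its homogeneous components $f=f_1+\cdots+f_n$; each $f_i$ is again normal with the \emph{same} normalizing twist as $f$, so $ff_i^{-1}$ is central as soon as $f_i$ is invertible. The hypothesis $f\notin Q$ then guarantees some $f_i\notin Q\supseteq K$, whence $f_i\in\EE_{JK}$ and $ff_i^{-1}\in\ZZ_{JK}$. This is exactly why the argument is phrased as a contrapositive (compare the proof of Proposition~\ref{res:conjecture satisfied if normal generation of primes}): the admissible denominator $f_i\in\EE_{JK}$ is produced \emph{from} the assumption $f\notin Q$. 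Your direct version can be repaired --- treat $x_i\in K$ separately (there $x_i\in Q$ is automatic) and for $x_i\notin K$ take a homogeneous component of $x_i$ outside $K$ as the denominator --- but as written it leaves the crucial membership of the denominator in $\EE_{JK}$ unjustified and appeals to the wrong mechanism.
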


While it is not immediately apparent how to check this condition in general, it certainly holds in low-dimensional examples which have been computed directly; in particular, it is verified for the algebras $\OO_q(GL_2)$, $\OO_q(M_2)$ and $\OO_q(SL_3)$ in \cite[\S5-7]{GBrown}.

Our first aim is to develop the corresponding Poisson version of this framework; we will see that it is very similar to the quantum case, and indeed many of the proofs go through with almost no modification.  By making it possible to reduce both the topologies of $spec(A)$ for quantum $A$ and $pspec(R)$ for $R$ the semi-classical limit of $A$ to questions of commutative algebra, we will then be able to compare the two topologies directly.

\section{A framework for patching together the strata of $pspec(R)$}\label{sec:framework}
In this section our goal is to construct intermediate commutative algebras $\PZ_{JK}$ for each pair of $\HH$-primes $J \subset K$ in Poisson algebras satisfying Conditions~\ref{conditions}, and also certain homomorphisms between these algebras and the Poisson centres $\PZ(R_J)$, $\PZ(R_K)$.  The results in this section closely follow the corresponding quantum results in \cite{GBrown}.

Throughout, let $R$ be a commutative Poisson algebra satisfying Conditions~\ref{conditions}, and let $J \subset K$ be Poisson $\HH$-primes in $R$.

Recall from Theorem~\ref{res:poisson stratification} the definitions
\begin{gather*}\EE_J := \{\textrm{all homogeneous non-zero elements in }R/J\}, \\ R_J := R/J[\EE_J^{-1}]. \end{gather*}
By Theorem~\ref{res:poisson stratification}, the topological spaces $pspec_J(R)$ and $spec(\PZ(R_J))$ are homeomorphic via the map $P \mapsto PR_J \cap \PZ(R_J)$; we will often identify these two spaces without comment. 

Define a new multiplicative set by
\begin{equation}\label{eq:E_JK}\EE_{JK} := \{c \in R/J: c \textrm{ is homogeneous in }R/J\textrm{ and is not in }K/J\}.\end{equation}
Unhampered by the problems of non-commutative localization faced in \cite{GBrown}, we can immediately define
\begin{equation}\label{eq:PZ}\PZ_{JK} := \PZ(R/J[\EE_{JK}^{-1}]),\end{equation}
The Poisson bracket on $R/J[\EE_{JK}^{-1}]$ is the unique extension of the one on $R$ via the formulas given in \eqref{eq:poisson bracket on quotient} and \eqref{eq:extension of bracket to localization}.  It is clear that \eqref{eq:PZ} is equivalent to (a Poisson version of) the definition used in \cite{GBrown}.  

Since $\EE_{JK} \subset \EE_J$, the natural inclusion map $R/J[\EE_{JK}^{-1}] \hookrightarrow R_J$ restricts to a map on the Poisson centres.  This is a homomorphism of commutative $k$-algebras and we will make use of it below, so we name it as follows:
\begin{equation}\label{eq:map g}
g_{JK}: \PZ_{JK} \hookrightarrow \PZ(R_J).
\end{equation}

We also require a map $\PZ_{JK} \longrightarrow \PZ(R_K)$; this is the purpose of the next lemma.

\begin{lemma}\label{res:definition of f}Let $\pi_{JK}: R/J \longrightarrow R/K$ denote the natural projection map.  Then the map $f_{JK}: \PZ_{JK} \longrightarrow \PZ(R_K)$ defined by $f_{JK}(ab^{-1}) = \pi_{JK}(a)\pi_{JK}(b)^{-1}$ is a homomorphism of commutative algebras.
\end{lemma}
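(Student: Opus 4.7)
The plan is to realize $f_{JK}$ as the restriction to Poisson centres of a natural Poisson algebra homomorphism $\widehat{\pi}_{JK}: R/J[\EE_{JK}^{-1}] \longrightarrow R_K$ induced by $\pi_{JK}$. Once this ambient map is constructed, well-definedness of $f_{JK}$, multiplicativity, and its identification with the formula $ab^{-1}\mapsto \pi_{JK}(a)\pi_{JK}(b)^{-1}$ are automatic from the universal property of localization; the only nontrivial point is that the image really does land in the \emph{Poisson} centre of $R_K$, rather than merely in the centraliser (under $\{-,-\}$) of some subring.

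First I would check that $\pi_{JK}$ sends $\EE_{JK}$ into $\EE_K$: an element of $\EE_{JK}$ is by definition homogeneous in $R/J$ and not in $K/J$, so its image in $R/K$ is a nonzero homogeneous element, i.e. lies in $\EE_K$. The composition $R/J \xrightarrow{\pi_{JK}} R/K \hookrightarrow R_K$ therefore sends every element of $\EE_{JK}$ to a unit, and the universal property of localization yields a unique commutative $k$-algebra homomorphism $\widehat{\pi}_{JK}: R/J[\EE_{JK}^{-1}] \longrightarrow R_K$ with $\widehat{\pi}_{JK}(ab^{-1}) = \pi_{JK}(a)\pi_{JK}(b)^{-1}$. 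Since $K$ is a Poisson ideal, $\pi_{JK}$ is a Poisson homomorphism by \eqref{eq:poisson bracket on quotient}, and by the uniqueness of the extension of the Poisson bracket to localizations expressed in \eqref{eq:extension of bracket to localization}, $\widehat{\pi}_{JK}$ is automatically a homomorphism of Poisson algebras.

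The key remaining step is surjectivity of $\widehat{\pi}_{JK}$, which I expect to be the main (mild) obstacle and the reason the proof is not entirely formal. Given any $b \in \EE_K$, one lifts $b$ to a homogeneous element $\tilde b \in R/J$ using the fact that $K/J$ is an $\HH$-invariant ideal of the graded ring $R/J$, so that $R/K$ inherits the grading and every nonzero homogeneous element lifts to a homogeneous preimage; since the image of $\tilde b$ in $R/K$ is nonzero, $\tilde b\notin K/J$, so $\tilde b \in \EE_{JK}$. Hence every generator $ab^{-1}\in R_K$ lies in the image of $\widehat{\pi}_{JK}$, and $\widehat{\pi}_{JK}$ is onto.

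Finally, to conclude that $f_{JK}:= \widehat{\pi}_{JK}|_{\PZ_{JK}}$ lands in $\PZ(R_K)$, take $z \in \PZ_{JK}$ and any $y \in R_K$. By surjectivity, $y = \widehat{\pi}_{JK}(\tilde y)$ for some $\tilde y \in R/J[\EE_{JK}^{-1}]$, and since $\widehat{\pi}_{JK}$ respects the brackets,
\[\bigl\{\widehat{\pi}_{JK}(z),\, y\bigr\} \;=\; \widehat{\pi}_{JK}\bigl(\{z,\tilde y\}\bigr) \;=\; \widehat{\pi}_{JK}(0)\;=\;0.\]
Thus $f_{JK}(z) \in \PZ(R_K)$, and $f_{JK}$ is a homomorphism of commutative $k$-algebras, as it is the restriction of the commutative $k$-algebra homomorphism $\widehat{\pi}_{JK}$ to the subalgebra $\PZ_{JK}$. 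This completes the plan.
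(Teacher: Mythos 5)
Your proposal is correct and follows essentially the same route as the paper: construct the Poisson homomorphism $\widehat{\pi}_{JK}: R/J[\EE_{JK}^{-1}] \to R_K$ via the universal property of localization (using $\pi_{JK}(\EE_{JK}) \subseteq \EE_K$), then restrict to Poisson centres. The one place you diverge is in justifying that the image lands in $\PZ(R_K)$. You prove full surjectivity of $\widehat{\pi}_{JK}$ by lifting elements of $\EE_K$ to homogeneous elements of $R/J$ not in $K/J$; that lifting argument is correct (since $K/J$ is $\HH$-stable, $R/K$ inherits the grading, and taking the homogeneous component of a preimage gives a homogeneous lift). However, this is slightly heavier than needed. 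It suffices to observe that $\pi_{JK}$ is onto $R/K$: for $z \in \PZ_{JK}$ and $r \in R/J$ one has $\{\widehat{\pi}_{JK}(z), \pi_{JK}(r)\} = \widehat{\pi}_{JK}(\{z,r\}) = 0$, so $\widehat{\pi}_{JK}(z)$ Poisson-commutes with all of $R/K$, and then \eqref{eq:poisson centre of localization} gives $\widehat{\pi}_{JK}(z) \in \PZ(R_K)$ without having to show that all denominators in $\EE_K$ are in the image. That said, your surjectivity claim is true and does the job; you have simply solved a marginally harder problem than necessary, and in the process made explicit a point that the paper's own proof elides with the remark that projections send Poisson-central elements to Poisson-central elements.
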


\begin{proof}
Let $\widetilde{f}$ be the composition of Poisson homomorphisms $\widetilde{f}: R/J \rightarrow R/K[\EE_K^{-1}]: a \mapsto \pi_{JK}(a)1^{-1}$; this extends uniquely to a Poisson homomorphism $R/J[\EE_{JK}^{-1}] \rightarrow R/K[\EE_{K}^{-1}]$ (which we also denote by $\widetilde{f}$) since $\widetilde{f}(\EE_{JK}) \subseteq \EE_K$.

Since projection to a quotient algebra sends Poisson-central elements to Poisson-central elements, it is easy to check that the following restriction is well defined:
\begin{equation}\label{eq:map f}
\begin{gathered}f_{JK}:= \widetilde{f}|_{\PZ_{JK}}: \PZ_{JK} \rightarrow \PZ(R_K),\\
ab^{-1} \mapsto \pi_{JK}(a)\pi_{JK}(b)^{-1},
\end{gathered}\end{equation}
and this is exactly the homomorphism of commutative $k$-algebras promised in the statement of the lemma.
\end{proof}

By rewriting $\PZ_{JK}$ in the same form as \cite[Definition 3.6]{GBrown}, it is an easy exercise to check that this definition of $f_{JK}$ agrees with (the Poisson equivalent of) the map defined in \cite[Lemma 3.10]{GBrown}.

We are now ready to put everything together.  Recall that whenever $h: R \rightarrow S$ is a homomorphism of commutative algebras, $h^{\circ}$ will denote the induced comorphism $spec(S) \rightarrow spec(R)$.

\begin{definition}\label{def:exciting map of interest}
Let $J \subset K$ be Poisson $\HH$-primes, and let $\PZ_{JK}$, $g_{JK}$ and $f_{JK}$ be as defined in \eqref{eq:PZ}, \eqref{eq:map g} and \eqref{eq:map f} respectively.  Define a map $CL(spec(\PZ(R_J))) \rightarrow CL(spec(\PZ(R_K)))$ as follows:
\[f_{JK}^{\circ} \overline{|} g_{JK}^{\circ} (Y) := f_{JK}^{\circ \ -1}\Big(\overline{g_{JK}^{\circ}(Y)}\Big), \quad Y \in CL(spec(\PZ(R_J))).\]
\end{definition}

Corresponding to \cite[Conjecture 3.11]{GBrown}, we expect that for all pairs of $\HH$-primes $J \subset K$ in Poisson algebras $R$ satisfying Conditions~\ref{conditions} we will have 
\begin{equation}\label{eq:varphi=fg}\varphi_{JK} =f_{JK}^{\circ} \overline{|} g_{JK}^{\circ}.\end{equation}
(Recall that we are identifying $pspec_J(R) \approx spec(\PZ(R_J))$ via the homeomorphism from Theorem~\ref{res:poisson stratification}.)

In order to establish conditions under which \eqref{eq:varphi=fg} holds, we need to describe the action of both $\varphi_{JK}$ and $f_{JK}^{\circ}\overline{|}g_{JK}^{\circ}$ on closed sets.  Since most of the action takes place in purely commutative algebras, the proofs necessarily follow those of \cite[\S4]{GBrown} closely.

We begin by restricting to the case $J = 0$ and describing the action of $f_{0K}^{\circ}\overline{|}g_{0K}^{\circ}$ on closed sets of the form $V(P) \cap pspec_0(R)$ for some $P \in pspec_0(R)$.  Since the next few proofs are quite technical, we hope that this approach will serve to illustrate the main ideas before we state the result in full generality (Proposition~\ref{res:main equality of maps result} below).

\begin{lemma}\label{res:first description of f|g action}
Set $J=0$, let $K$ be any $\HH$-prime, and suppose $P \in pspec_0(R)$.  Define $g_{0K}$ and $f_{0K}$ as in \eqref{eq:map g},\eqref{eq:map f} respectively, and let $Y = V(P) \cap pspec_0(R)$.  Then
\[f_{0K}^{\circ}\overline{|}g_{0K}^{\circ}(Y) = \{Q \in pspec_K(R) : f(PR_0 \cap \PZ_{0K}) \subseteq QR_K\}.\]
\end{lemma}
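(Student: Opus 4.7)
The plan is to unwind the definition of $f_{0K}^{\circ}\overline{|}g_{0K}^{\circ}(Y)$ directly, using the homeomorphisms from Theorem~\ref{res:poisson stratification} to identify $pspec_0(R)$ with $spec(\PZ(R_0))$ (via $P \mapsto \mathfrak{p}_0 := PR_0 \cap \PZ(R_0)$) and $pspec_K(R)$ with $spec(\PZ(R_K))$ (via $Q \mapsto QR_K \cap \PZ(R_K)$). Under these identifications, $Y = V(P) \cap pspec_0(R)$ becomes the closed set $V(\mathfrak{p}_0) \subseteq spec(\PZ(R_0))$, and $\mathfrak{p}_0$ is a prime (in particular radical) ideal.

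The first computational step is to describe $\overline{g_{0K}^{\circ}(Y)}$ as a closed subset of $spec(\PZ_{0K})$. Since $g_{0K}$ is the natural inclusion $\PZ_{0K} \hookrightarrow \PZ(R_0)$, the comorphism $g_{0K}^{\circ}$ is contraction: $\mathfrak{p} \mapsto \mathfrak{p} \cap \PZ_{0K}$. The closure of $g_{0K}^{\circ}(Y)$ is $V(\mathfrak{a})$, where $\mathfrak{a}$ is the intersection of all primes in the image. Using that preimages and intersections commute, together with the standard identity $\bigcap_{\mathfrak{p} \supseteq \mathfrak{p}_0} \mathfrak{p} = \mathfrak{p}_0$ (valid because $\mathfrak{p}_0$ is prime, hence radical, in the noetherian ring $\PZ(R_0)$), I compute
\[\mathfrak{a} = \bigcap_{\mathfrak{p} \supseteq \mathfrak{p}_0} (\mathfrak{p} \cap \PZ_{0K}) = \Big(\bigcap_{\mathfrak{p} \supseteq \mathfrak{p}_0} \mathfrak{p}\Big) \cap \PZ_{0K} = \mathfrak{p}_0 \cap \PZ_{0K} = PR_0 \cap \PZ_{0K}.\]
Thus $\overline{g_{0K}^{\circ}(Y)} = V(PR_0 \cap \PZ_{0K})$ in $spec(\PZ_{0K})$.

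Next, I pull this back through $f_{0K}^{\circ}$: a point $Q' \in spec(\PZ(R_K))$ lies in $f_{0K}^{\circ\ -1}(V(PR_0 \cap \PZ_{0K}))$ precisely when $f_{0K}^{-1}(Q') \supseteq PR_0 \cap \PZ_{0K}$, equivalently when $Q' \supseteq f_{0K}(PR_0 \cap \PZ_{0K})$. Translating back to $pspec_K(R)$, write $Q' = QR_K \cap \PZ(R_K)$. Because the image of $f_{0K}$ already lies inside $\PZ(R_K)$, the containment $f_{0K}(PR_0 \cap \PZ_{0K}) \subseteq QR_K \cap \PZ(R_K)$ is equivalent to $f_{0K}(PR_0 \cap \PZ_{0K}) \subseteq QR_K$, which is exactly the description claimed in the lemma.

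The argument is essentially bookkeeping, and the only genuine subtlety is the intersection identity in the middle step: I need $\mathfrak{p}_0 = \bigcap_{\mathfrak{p} \supseteq \mathfrak{p}_0} \mathfrak{p}$, which relies on $\mathfrak{p}_0$ being prime (guaranteed by the homeomorphism in Theorem~\ref{res:poisson stratification}(2)) and on $\PZ(R_0)$ being a (noetherian) Laurent polynomial ring by Theorem~\ref{res:poisson stratification}(1). The commuting of preimage with intersection, and the fact that $g_{0K}$ is literally an inclusion so that contraction amounts to intersection with $\PZ_{0K}$, make that step immediate. This is the main thing to verify carefully; everything else follows from the definitions and the two stratification homeomorphisms.
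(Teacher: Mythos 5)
Your proof is correct and follows essentially the same route as the paper: identify the strata with $spec(\PZ(R_0))$ and $spec(\PZ(R_K))$, observe that $g_{0K}^{\circ}$ is contraction along the inclusion and compute $\overline{g_{0K}^{\circ}(Y)} = V(PR_0 \cap \PZ_{0K})$, then pull back through $f_{0K}^{\circ}$ and use $im(f_{0K}) \subseteq \PZ(R_K)$ to drop the intersection with $\PZ(R_K)$. The only cosmetic difference is that the paper gets the closure by noting $g_{0K}^{\circ}(Y)$ has a least element $PR_0 \cap \PZ_{0K}$, while you compute $\bigcap_{\mathfrak{p} \supseteq \mathfrak{p}_0} \mathfrak{p}$ directly; also note that the identity $\mathfrak{p}_0 = \bigcap_{\mathfrak{p} \supseteq \mathfrak{p}_0} \mathfrak{p}$ is immediate for any prime $\mathfrak{p}_0$ in any commutative ring (since $\mathfrak{p}_0$ itself is one of the primes in the intersection), so the appeal to noetherianness and radical ideals there is unnecessary.
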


\begin{proof}
As always, we identify $pspec_0(R)$ and $spec(\PZ(R_0))$ via the homeomorphism from Theorem~\ref{res:poisson stratification}.  To simplify the notation we will drop the subscripts from maps and write $\varphi$, $\pi$, $f$ and $g$ for $\varphi_{0K}$, $\pi_{0K}$, $f_{0K}$ and $g_{0K}$ respectively.

First, consider the set $\overline{g^{\circ}(Y)}$: since $g$ is the natural embedding map $\PZ_{0K} \hookrightarrow \PZ(R_0)$, the comorphism $g^{\circ}$ is given by contraction to $\PZ_{0K}$.  Contraction clearly preserves inclusions and so $g^{\circ}(Y)$ contains a minimal element, namely $(PR_0\cap \PZ(R_0)) \cap \PZ_{0K} = PR_0 \cap \PZ_{0K}$.  We conclude that
\[\overline{g^{\circ}(Y)} = \{T \in spec(\PZ_{0K}) : T \supseteq (PR_0 \cap \PZ_{0K})\}. \]
For $Q \in pspec_K(R)$, we can now see that
\begin{align*}
Q \in f^{\circ}\overline{|}g^{\circ}(Y) &\iff f^{\circ}(QR_K \cap \PZ(R_K)) \in \overline{g^{\circ}(Y)}\\
&\iff f^{-1}(QR_K \cap \PZ(R_K)) \supseteq PR_J \cap \PZ_{0K} \\
&\iff QR_K \cap \PZ(R_K) \supseteq f(PR_0 \cap \PZ_{0K}).
\end{align*}
Finally, since $im(f) \subseteq \PZ(R_K)$ it is clear that $f(PR_0 \cap \PZ_{0K})$ is contained in $QR_K \cap \PZ(R_K)$ if and only if it is contained in $QR_K$, as required.
\end{proof}

Recall that we defined $\PZ_{JK} := \PZ(R/J[\EE_{JK}^{-1}])$, and that $f_{JK}$ is induced from the projection map $\pi_{JK}: R/J \rightarrow R/K$.  This allows us to rewrite $f(PR_0 \cap \PZ_{0K})$ as
\[f(PR_0 \cap \PZ_{0K}) = \{\pi_{JK}(p)\pi_{JK}(c)^{-1}: p \in P, \ c \in \EE_{0K}, \ pc^{-1} \in \PZ_{0K}\}.\]
As in \cite[Proposition 4.1]{GBrown}, we now observe that for $p \in P$ and $c \in \EE_{0K}$, we have 
\begin{align*}
\pi_{JK}(p)\pi_{JK}(c)^{-1} \in QR_K &\iff \pi_{JK}(p) \in QR_K \\
&\iff \pi_{JK}(p) \in QR_K \cap (R/K) = Q/K \\
&\iff p \in Q.
\end{align*}
We conclude that 
\[f(PR_0\cap \PZ_{0K}) \subseteq QR_K \iff \{p \in P : pc^{-1} \in \PZ_{0K} \textrm{ for some }c \in \EE_{0K}\} \subseteq Q.\]
This motivates the following result, as in \cite{GBrown}.

\begin{proposition}\label{res:condition for maps equal when J=0}
Set $J = 0$, let $K$ be any Poisson $\HH$-prime, and define $\PZ_{0K} \cdot \EE_{0K} = \{zc : z \in \PZ_{0K}, \ c \in \EE_{0K}\}$.  For $P \in pspec_0(R)$ and $Y = V(P) \cap pspec_0(R)$, we have
\begin{equation}\label{eq:characterisation of f|g}f_{0K}\overline{|}g_{0K}(Y) = \{Q \in pspec_K(R) : P \cap (\PZ_{0K} \cdot \EE_{0K}) \subseteq Q\}\end{equation}
and hence
\begin{equation}\label{eq:thing to prove}
\begin{array}{ccc}\multirow{2}{*}{$\varphi_{0K}(Y) = f_{0K}\overline{|}g_{0K}(Y)$}& $\textrm{ if and }$ &
P \cap (\PZ_{0K} \cdot \EE_{0K}) \subseteq Q \implies P \subseteq Q, \\ 
& $\textrm{ only if }$& \forall Q \in pspec_K(R).\end{array}\end{equation}


\end{proposition}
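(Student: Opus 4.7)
The plan is to prove the two equations in sequence, deducing \eqref{eq:thing to prove} as a formal consequence of \eqref{eq:characterisation of f|g} together with a direct computation of $\varphi_{0K}(Y)$.

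\textbf{Step 1: establishing \eqref{eq:characterisation of f|g}.} I would start from Lemma~\ref{res:first description of f|g action}, which already gives
\[
f_{0K}^{\circ}\overline{|}g_{0K}^{\circ}(Y) = \{Q \in pspec_K(R) : f_{0K}(PR_0 \cap \PZ_{0K}) \subseteq QR_K\},
\]
and combine it with the displayed chain of equivalences immediately preceding the proposition, which translates the condition $f_{0K}(PR_0 \cap \PZ_{0K}) \subseteq QR_K$ into
\[
\{p \in P : pc^{-1} \in \PZ_{0K}\ \text{for some}\ c \in \EE_{0K}\} \subseteq Q.
\]
The single remaining observation is that this set of $p$'s is exactly $P \cap (\PZ_{0K} \cdot \EE_{0K})$: if $pc^{-1}=: z \in \PZ_{0K}$ with $c \in \EE_{0K}$, then $p=zc \in \PZ_{0K}\cdot\EE_{0K}$; conversely, if $p=zc$ with $z \in \PZ_{0K}$ and $c \in \EE_{0K}$, then multiplying through in $R[\EE_{0K}^{-1}]$ gives $pc^{-1} = z \in \PZ_{0K}$. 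This yields \eqref{eq:characterisation of f|g}.

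\textbf{Step 2: deducing \eqref{eq:thing to prove}.} Next, I would compute $\varphi_{0K}(Y)$ directly. By definition $\varphi_{0K}(Y) = \overline{Y} \cap pspec_K(R)$, where the closure is taken in $pspec(R)$. In the Zariski topology, the closure of a set of primes equals $V$ of their intersection, so $\overline{Y} = V\bigl(\bigcap_{Q \in Y} Q\bigr)$. Since $P \in Y$ and $Y \subseteq V(P)$, that intersection equals $P$ exactly, giving $\overline{Y} = V(P)$ and hence $\varphi_{0K}(Y) = \{Q \in pspec_K(R) : P \subseteq Q\}$. Comparing with the characterization from Step~1 and noting the trivial inclusion $P \cap (\PZ_{0K}\cdot\EE_{0K}) \subseteq P$, one direction of the set equality $\varphi_{0K}(Y) = f_{0K}^{\circ}\overline{|}g_{0K}^{\circ}(Y)$ is automatic, and the reverse inclusion is equivalent to the implication stated in \eqref{eq:thing to prove}.

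\textbf{Expected obstacle.} There is essentially no technical difficulty: both parts are formal consequences of Lemma~\ref{res:first description of f|g action} and the elementary calculations the authors have already displayed. The only point requiring a moment's care is the identification $\overline{Y} = V(P)$ in Step~2; one must verify that no extra primes appear in the closure beyond those already containing $P$, which boils down to the standard Zariski fact that $\overline{\{P_i\}_i} = V(\bigcap_i P_i)$ in $pspec(R)$, combined with the observation that $P$ itself lies in $Y$.
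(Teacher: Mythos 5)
Your proof is correct and follows essentially the same route as the paper: it invokes Lemma~\ref{res:first description of f|g action} together with the preceding discussion to obtain \eqref{eq:characterisation of f|g}, and then compares against the direct computation $\varphi_{0K}(Y) = V(P) \cap pspec_K(R)$ to deduce \eqref{eq:thing to prove}. The only difference is that you spell out a few steps the paper leaves implicit (the identification of $\{p \in P : pc^{-1} \in \PZ_{0K}\ \text{for some}\ c \in \EE_{0K}\}$ with $P \cap (\PZ_{0K}\cdot\EE_{0K})$, and the closure identification $\overline{Y} = V(P)$), which is fine.
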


\begin{proof}
The equality \eqref{eq:characterisation of f|g} is clear from Lemma~\ref{res:first description of f|g action} and following discussion.  To prove \eqref{eq:thing to prove}, first recall that since $P \in pspec_0(R)$ the action of $\varphi_{0K}$ on $Y$ is as follows:
\begin{align*}\varphi_{0K}(Y) &= V(P) \cap pspec_K(R). \\
&= \{Q \in pspec_K(R) : Q \supseteq P\},\end{align*}
Combining this with \eqref{eq:characterisation of f|g}, the inclusion $\varphi_{0K}(Y) \subseteq f_{0K}\overline{|}g_{0K}(Y)$ is clear; meanwhile, the reverse inclusion holds if and only if
\[P \cap (\PZ_{0K} \cdot \EE_{0K}) \subseteq Q \implies P \subseteq Q, \quad \forall Q \in pspec_K(R).\]
\end{proof}

With all of the pieces in place, we can now expand our attention to arbitrary pairs of $\HH$-primes $J \subset K$ and to arbitrary closed sets $Y \in CL(pspec_J(R))$.  The next proposition shows that the general case follows easily from the specific cases considered above.

\begin{proposition}\label{res:main equality of maps result}
Let $R$ be a commutative Poisson algebra satisfying Conditions~\ref{conditions}, let $J \subset K$ be $\HH$-primes, and define $g_{JK}$, $f_{JK}$ as in \eqref{eq:map g}, \eqref{eq:map f} respectively.  Then the equality $\varphi_{JK} = f_{JK}^{\circ}\overline{|}g_{JK}^{\circ}$ holds for all closed sets $Y \in CL(pspec_J(R))$ if and only if
\begin{equation}\label{eq:main condition for equality of the two maps}(P/J) \cap \PZ_{JK} \cdot \EE_{JK} \subseteq Q/J \implies P \subseteq Q\end{equation}
for all $P \in pspec_J(R)$ and all $Q \in pspec_K(R)$.
\end{proposition}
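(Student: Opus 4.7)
The plan is to reduce the general statement to Proposition~\ref{res:condition for maps equal when J=0} in two steps: first, reduce from arbitrary closed sets $Y$ to those of the form $V(P)\cap pspec_J(R)$ for a single $P\in pspec_J(R)$; and second, reduce from a general $\HH$-prime $J$ to the case $J=0$ by passing to the quotient Poisson algebra $R/J$.

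For the first step, I would verify that both $\varphi_{JK}$ and $f_{JK}^{\circ}\overline{|}g_{JK}^{\circ}$ preserve finite unions of closed sets. For $\varphi_{JK}$ this is Lemma~\ref{res:properties of finite stratification}(2). For $f_{JK}^{\circ}\overline{|}g_{JK}^{\circ}$, it follows because set-theoretic images, topological closures, and preimages all commute with finite unions (the image and closure operations are applied inside $spec(\PZ_{JK})$, and $(f_{JK}^{\circ})^{-1}$ preserves arbitrary unions). Since $R$ is noetherian, every closed set $Y\in CL(pspec_J(R))$ is a \emph{finite} union of sets of the form $V(P)\cap pspec_J(R)$ with $P\in pspec_J(R)$ (using Lemma~\ref{res:pspec has a finite stratification} and the fact that $\{V(P):P\in pspec(R)\}$ is a basis for the closed sets on each stratum). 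Therefore the equality of the two maps on all closed $Y$ reduces to checking it on all such $V(P)\cap pspec_J(R)$.

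For the second step, I would show that passing to $\overline{R}:=R/J$ translates the whole setup into the $J=0$ situation. Observe that $\overline{R}$ is a commutative affine Poisson $k$-algebra satisfying Conditions~\ref{conditions}, with the induced rational $\HH$-action by Poisson automorphisms, and that $\overline{K}:=K/J$ is a Poisson $\HH$-prime of $\overline{R}$. Under the standard correspondence $P\leftrightarrow P/J$, we have $pspec_J(R)\leftrightarrow pspec_0(\overline{R})$ and $pspec_K(R)\leftrightarrow pspec_{\overline{K}}(\overline{R})$, and the topological stratifications match. Moreover, the multiplicative set $\EE_{JK}$ for the pair $(J,K)$ in $R$ coincides (via the quotient identification) with the set $\EE_{0,\overline{K}}$ for the pair $(0,\overline{K})$ in $\overline{R}$, so $\PZ_{JK}=\PZ_{0,\overline{K}}$, and the maps $g_{JK}$, $f_{JK}$ become $g_{0,\overline{K}}$, $f_{0,\overline{K}}$ respectively. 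Consequently $\varphi_{JK}$ corresponds to $\varphi_{0,\overline{K}}$ and $f_{JK}^{\circ}\overline{|}g_{JK}^{\circ}$ corresponds to $f_{0,\overline{K}}^{\circ}\overline{|}g_{0,\overline{K}}^{\circ}$.

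Once these two translations are in place, the equivalence in the statement follows directly: applying Proposition~\ref{res:condition for maps equal when J=0} to $\overline{R}$ with the $\HH$-prime $\overline{K}$ gives that $\varphi_{0,\overline{K}}(V(P/J)\cap pspec_0(\overline{R}))=f_{0,\overline{K}}^{\circ}\overline{|}g_{0,\overline{K}}^{\circ}(V(P/J)\cap pspec_0(\overline{R}))$ for all $P/J\in pspec_0(\overline{R})$ if and only if $(P/J)\cap \PZ_{JK}\cdot\EE_{JK}\subseteq Q/J\implies P/J\subseteq Q/J$ for all choices of $P\in pspec_J(R)$ and $Q\in pspec_K(R)$, which is exactly \eqref{eq:main condition for equality of the two maps} (using $J\subseteq P\cap Q$ to pull the last inclusion back to $P\subseteq Q$ in $R$). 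The main substantive work, and the only mild obstacle, is the bookkeeping needed to verify that all of the constructions $\EE$, $\PZ_{\bullet\bullet}$, $g$, $f$, and $\varphi$ really are natural with respect to the quotient $R\twoheadrightarrow R/J$; once stated carefully, each verification is immediate from the defining formulas \eqref{eq:poisson bracket on quotient}, \eqref{eq:extension of bracket to localization}, \eqref{eq:E_JK}, \eqref{eq:PZ}, \eqref{eq:map g}, and \eqref{eq:map f}.
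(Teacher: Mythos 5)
Your proposal is correct and follows essentially the same strategy as the paper: reduce to the case $J=0$, reduce to closed sets of the form $V(P)\cap pspec_J(R)$ via finite unions, and invoke Proposition~\ref{res:condition for maps equal when J=0}. The only cosmetic differences are that you perform the two reductions in the opposite order and spell out the $R\twoheadrightarrow R/J$ naturality more formally than the paper does (which simply notes that $V(I)\cap pspec_L(R)=V(I+J)\cap pspec_L(R)$ for $L\supseteq J$, so one may replace $I$ by $I+J$ and work as if $J=0$); one small point worth making explicit in your first step is precisely this identity $V(I)\cap pspec_J(R)=V(I+J)\cap pspec_J(R)$, which is needed to guarantee that the minimal Poisson primes in your decomposition all contain $J$, so that after discarding the empty terms you are left only with $P\in pspec_J(R)$.
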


\begin{proof}
We begin by considering an arbitrary closed set $Y \in CL(pspec_J(R))$; this has the form $Y = V(I) \cap pspec_J(R)$ for some Poisson ideal $I$.  As in the quantum case, we can first observe that for any Poisson $\HH$-prime $L \supseteq J$ and any $Q \in V(I) \cap pspec_L(R)$ we have $J \subseteq Q$; hence
\[V(I) \cap pspec_L(R) = V(I+J) \cap pspec_L(R),\]
i.e. we may reduce to the case $J = 0$ without loss of generality.

Next, recall that $V(I) \cap pspec_J(R)$ is a finite union of sets of the form $V(P_i) \cap pspec_J(R)$, where the $P_i$ are the Poisson primes minimal over $I$.  Both of the maps $\varphi_{JK}$ and $f_{JK}^{\circ}\overline{|}g_{JK}^{\circ}$ preserve finite unions, so we can focus our attention on closed sets of the form $V(P) \cap spec_J(R)$ for $P \in pspec(R)$.  Since we have reduced to the case $J = 0$ we know that $J \subseteq P$, but we need to consider the possibility that there is some $L \in \HH$-$pspec(R)$ such that $J \subsetneq L \subseteq P$.  In this case, however, $V(P) \cap pspec_J(R) = \emptyset$, and it follows trivially that $\varphi_{JK} = f_{JK}^{\circ}\overline{|}g_{JK}^{\circ}$ on $V(P) \cap pspec_J(R)$.

The proposition therefore holds if and only if it holds in the case $J = 0$ and $Y = V(P) \cap pspec_J(R)$ for any prime $P$ in $pspec_J(R)$.  The result now follows from Proposition~\ref{res:condition for maps equal when J=0}.
\end{proof}

\begin{proposition}\label{res:conjecture satisfied if normal generation of primes}
Apply the same hypotheses as Proposition \ref{res:main equality of maps result}, and fix $P \in pspec_J(R)$.  If $P/J$ has a generating set consisting of Poisson-normal elements in $R/J$, then \eqref{eq:main condition for equality of the two maps} holds for that choice of Poisson prime $P$.
\end{proposition}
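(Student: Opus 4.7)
My plan is to follow the structure of the analogous quantum argument in \cite[Proposition~4.2]{GBrown}. The first move will be to invoke Proposition~\ref{res:main equality of maps result} and reduce to the case $J = 0$: after replacing $R$ by $R/J$, I may assume $P \in pspec_0(R)$, so $P$ contains no nonzero $\HH$-eigenvector, and the task becomes to show that the assumption $P \cap (\PZ_{0K} \cdot \EE_{0K}) \subseteq Q$ forces $P \subseteq Q$.

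Since $Q$ is an ideal, it will suffice to show $p \in Q$ for every Poisson-normal generator $p$ of $P$. Fix one such $p$. If $p \in K$, then $p \in Q$ already, because $Q \in pspec_K(R)$ implies $K \subseteq Q$; so I may assume $p \notin K$. My aim will then be to exhibit $p$ itself as a product $zc$ with $z \in \PZ_{0K}$ and $c \in \EE_{0K}$, which will place $p$ in $P \cap (\PZ_{0K} \cdot \EE_{0K}) \subseteq Q$ and finish this case.

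To produce such a factorization, Poisson normality gives a derivation $\lambda_p : R \to R$ with $\{p, a\} = p \, \lambda_p(a)$ for all $a \in R$ (well-defined since $R$ is a domain). A short calculation in $R[\EE_{0K}^{-1}]$ using \eqref{eq:extension of bracket to localization} shows that $pc^{-1}$ is Poisson-central precisely when $c$ is Poisson-normal with the same logarithmic derivative, i.e.\ $\{c, a\} = c \, \lambda_p(a)$ for all $a$. I will construct $c$ by decomposing $p$ into $\HH$-homogeneous components $p = \sum_\chi p_\chi$ and selecting $c = p_{\chi_0}$ for some weight $\chi_0$ with $p_{\chi_0} \notin K$; such a weight exists because $K$ is $\HH$-invariant, so if every $p_\chi$ lay in $K$ then $p$ would too.

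The hard part of the argument will be verifying that each nonzero homogeneous component $p_\chi$ of a Poisson-normal $p$ inherits Poisson normality with the \emph{same} logarithmic derivation $\lambda_p$, which is what makes the candidate $c$ work. This will require combining the $\HH$-equivariance of the Poisson bracket with a weight-by-weight decomposition of the equation $\{p, a\} = p\,\lambda_p(a)$ applied to homogeneous $a$; it is the Poisson analog of the structural observation about normal elements exploited in \cite[\S4]{GBrown}. Once that is in place, $p = (pc^{-1}) \cdot c$ furnishes the required factorization, placing $p$ in $P \cap (\PZ_{0K} \cdot \EE_{0K}) \subseteq Q$, and the proof concludes.
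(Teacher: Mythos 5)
Your proposal is correct and follows essentially the same argument as the paper: both reduce to $J=0$, decompose a Poisson-normal generator into $\HH$-homogeneous components (each of which inherits Poisson normality with the same logarithmic derivative), select a component $c$ lying outside $K$, and show $pc^{-1}$ is Poisson central so that $p = (pc^{-1})c \in P \cap (\PZ_{0K}\cdot\EE_{0K})$. The only difference is presentational --- you prove the implication directly (handling the trivial sub-case $p \in K$ separately), whereas the paper argues by contrapositive, taking a Poisson-normal $f \in P\setminus Q$ so that $f \notin K$ automatically.
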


\begin{proof}
Fix some $Q \in pspec_K(R)$.  Since $J \subset Q$, we may as well assume $J = 0$.  We prove the contrapositive of \eqref{eq:main condition for equality of the two maps}, so suppose $P \not\subseteq Q$.  By assumption there is some non-zero element $f \in P \backslash Q$ which is Poisson-normal in $R$, which we can decompose as $f = f_1 + \dots + f_n$ for some $\HH$-eigenvectors $\{f_1, \dots, f_n\}$ with distinct eigenvalues.  It is standard that each $f_i$ will also be Poisson-normal in $R$, and further that they are compatible with $f$ in the following sense: for any $r \in R$, let $r_f \in R$ be such that $\{f,r\} = fr_f$, then we must have $\{f_i,r\} = f_ir_f$ as well.  (This follows from the corresponding result for homogeneous $r$, which can easily be seen by expanding out $\{f,r\} = \{f_1 + \dots + f_n,r\}$.)

Since $f \in P \backslash Q$, there is some $i$ such that $f_i \not\in Q$.  Now $f_i$ is regular, homogeneous and regular mod $K$, i.e. $f_i \in \EE_{JK}$.  Observe that in $R[\EE_{JK}^{-1}]$,
\[\{ff_i^{-1},r\} = \{f,r\}f_i^{-1} - \{f_i,r\}ff_i^{-2} = fr_ff_i^{-1} - f_ir_fff_i^{-2} = 0, \quad \forall r \in R,\]
so that $ff_i^{-1} \in \PZ_{JK}$ by \eqref{eq:poisson centre of localization}. We have shown that $f = (ff_i^{-1})f_i \in P \cap (\PZ_{JK} \cdot \EE_{JK})$ but $f \not\in Q$, as required.
\end{proof}

Finally, when applying these results we will often want to replace the set $\EE_{JK}$ with a subset $\widetilde{\EE}_{JK}$ such that $\PZ_{JK} = \PZ(R/J[\widetilde{\EE}_{JK}^{-1}])$; the conditions under which this can be done are described in the next lemma.

\begin{lemma}\label{res:shrinking E_JK set}
Let $\EE_{JK}$ be as defined in \eqref{eq:E_JK}, and suppose $\widetilde{\EE}_{JK} \subset \EE_{JK}$ is a multiplicative set which satisfies the following condition: for all $\HH$-primes $L$ such that $J \subset L$ and $L \not\subseteq K$, we have
\[L/J \cap \widetilde{\EE}_{JK} \neq \emptyset.\]
Then $\PZ_{JK} = \PZ(R/J[\widetilde{\EE}_{JK}^{-1}])$.
\end{lemma}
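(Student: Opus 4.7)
The plan is to establish both inclusions $\PZ(R/J[\widetilde{\EE}_{JK}^{-1}]) \subseteq \PZ_{JK}$ and $\PZ_{JK} \subseteq \PZ(R/J[\widetilde{\EE}_{JK}^{-1}])$, viewed as subrings of $R/J[\EE_{JK}^{-1}]$. Since $\widetilde{\EE}_{JK} \subseteq \EE_{JK}$, the uniqueness in \eqref{eq:extension of bracket to localization} means that $R/J[\widetilde{\EE}_{JK}^{-1}]$ sits inside $R/J[\EE_{JK}^{-1}]$ as a Poisson subalgebra. The inclusion $\PZ(R/J[\widetilde{\EE}_{JK}^{-1}]) \subseteq \PZ_{JK}$ is then immediate from \eqref{eq:poisson centre of localization}: an element that Poisson-commutes with every element of $R/J$ lies in the Poisson centre of either localization.

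For the reverse inclusion, fix $z \in \PZ_{JK}$. Because $\HH$ acts by Poisson automorphisms and $R/J[\EE_{JK}^{-1}]$ is $\HH$-graded, each $\HH$-homogeneous component of $z$ is itself Poisson-central, so we may assume $z$ is homogeneous. The task is to rewrite $z$ with a denominator in $\widetilde{\EE}_{JK}$. Introduce the denominator ideal
\[I_z := \{b \in R/J : bz \in R/J\}.\]
Writing $z = ac^{-1}$ with $c \in \EE_{JK}$ gives $c \in I_z$, so $I_z$ is a nonzero ideal of $R/J$ not contained in $K/J$. Short computations show that $I_z$ is $\HH$-invariant (using that $z$ is homogeneous, so $h(b)z$ differs from $h(bz)$ by a scalar) and Poisson (using $\{bz,r\} = \{b,r\}z + b\{z,r\} = \{b,r\}z$, where the last equality holds because $z$ is Poisson-central).

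The crux is to show $I_z \cap \widetilde{\EE}_{JK} \neq \emptyset$. Suppose otherwise. By the noetherian hypothesis, pick a Poisson $\HH$-ideal $L/J$ of $R/J$ which contains $I_z$ and is maximal among Poisson $\HH$-ideals disjoint from $\widetilde{\EE}_{JK}$; a standard argument (analogous to the proof that maximal ideals avoiding a multiplicative set are prime) shows that $L/J$ must be Poisson $\HH$-prime. Since $c \in I_z \subseteq L/J$ is nonzero and not in $K/J$, we have both $J \subsetneq L$ and $L \not\subseteq K$, so the hypothesis of the lemma forces $L/J \cap \widetilde{\EE}_{JK} \neq \emptyset$, contradicting the choice of $L$. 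Hence there is some $\tilde{c} \in I_z \cap \widetilde{\EE}_{JK}$, so $z = (\tilde{c}z)\tilde{c}^{-1} \in R/J[\widetilde{\EE}_{JK}^{-1}]$, and a final appeal to \eqref{eq:poisson centre of localization} places $z$ in $\PZ(R/J[\widetilde{\EE}_{JK}^{-1}])$.

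The main obstacle is the construction of $L$: we must extend $I_z$ to a Poisson $\HH$-prime that simultaneously avoids $\widetilde{\EE}_{JK}$, strictly contains $J$, and lies outside $K$. All three conditions are controlled by the single witness $c \in I_z \cap \EE_{JK}$, so once the denominator ideal $I_z$ is correctly identified and its good properties verified, the rest is formal.
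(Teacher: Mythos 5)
Your proof is correct, and it reaches the same destination as the paper by a genuinely different route in the substantive direction $\PZ_{JK} \subseteq \PZ(R/J[\widetilde{\EE}_{JK}^{-1}])$. The paper works with $I$, the smallest Poisson ideal of $R/J$ containing the denominator $c$; to show $I \cap \widetilde{\EE}_{JK} \neq \emptyset$ it localizes at $\widetilde{\EE}_{JK}$, takes a minimal Poisson prime over the extended ideal, and passes to its $\HH$-prime; and it then must do an explicit computation with iterated Poisson brackets to verify $zI \subseteq R/J$ before clearing denominators. You instead use the full denominator ideal $I_z = \{b : bz \in R/J\}$, which is a strictly larger Poisson $\HH$-ideal containing $c$, and for which $zI_z \subseteq R/J$ holds by definition — so the bracket computation is absorbed into the (brief) verification that $I_z$ is a Poisson ideal, at the cost of first reducing to the case that $z$ is $\HH$-homogeneous (which the paper avoids, since the Poisson ideal generated by the homogeneous element $c$ is automatically $\HH$-stable). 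For the nonemptiness of $I_z \cap \widetilde{\EE}_{JK}$ you invoke a maximal Poisson $\HH$-ideal avoiding $\widetilde{\EE}_{JK}$ and argue it is Poisson $\HH$-prime; this is indeed a standard multiplicative-avoidance argument (sums and products of Poisson $\HH$-ideals are again Poisson $\HH$-ideals, and $\widetilde{\EE}_{JK}$ consists of $\HH$-eigenvectors), combined with the cited characterisation of Poisson $\HH$-primes as prime ideals that are simultaneously Poisson and $\HH$-invariant. Both approaches are sound; yours trades the paper's bracket computation for a homogeneity reduction, and is arguably slightly tidier.
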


\begin{proof}
As above, it is enough to consider $J = 0$.  

One direction is clear: if $ac^{-1} \in \PZ(R[\widetilde{\EE}_{JK}^{-1}])$ then in particular $ac^{-1}$ Poisson-commutes with every element of $R$.  It now follows from \eqref{eq:poisson centre of localization} that $ac^{-1} \in \PZ_{JK}$ as well.

Conversely, suppose that $ac^{-1} \in \PZ_{JK}$; we need to show that we can find $a' \in R$, $c' \in \widetilde{\EE}_{JK}$ such that $ac^{-1} = a'c'^{-1}$.  Let $I$ be the Poisson ideal generated by $c$ in $R$: this is the smallest ideal in $R$ which contains $c$ and satisfies $\{I,R\} \subseteq I$.  We first show that $I \cap \widetilde{\EE}_{JK} \neq \emptyset$.  Suppose not: then the extension $I[\widetilde{\EE}_{JK}^{-1}]$ is a non-trivial Poisson ideal in $R[\widetilde{\EE}_{JK}^{-1}]$.  Let $P$ be any minimal Poisson prime ideal over $I[\EE_{JK}^{-1}]$, and
\[L:=\bigcap_{h \in \HH} h(P)\]
its Poisson $\HH$-prime.  $L$ is non-zero (in particular $c \in L$) so by standard localization theory it corresponds to a non-zero $\HH$-prime $L'$ in $R$ which is disjoint from $\widetilde{\EE}_{JK}$.  By the definition of $\widetilde{\EE}_{JK}$ this means $L' \subset K$, but now $c \in L' \subset K$ is a contradiction since $c$ is regular mod $K$.  We conclude that $I \cap \widetilde{\EE}_{JK} \neq \emptyset$.

As a commutative ideal, $I$ is generated by $c$ and all $n$-fold Poisson brackets $\{\dots\{c,b_1\},b_2\}\dots,b_n\}$, $n \geq 1$, $b_1, \dots, b_n \in R$ homogeneous.  In addition, it follows from the Poisson-centrality of $ac^{-1}$ that $\{a,b\}\{c,b\}^{-1} = ac^{-1}$ for any homogeneous $b \in R$, and hence inductively
\[ac^{-1} = \big\{\dots\{a,b_1\},b_2\}\dots,b_n\big\}\big\{\dots\{c,b_1\},b_2\},\dots,b_n\big\}^{-1},\]
with $n$, $b_1, \dots, b_n$ as above.  In particular, we can conclude from this that $ac^{-1}\cdot I \subset R$.

Let $c' \in I \cap \widetilde{\EE}_{JK}$, and let $a':=ac^{-1}c' \in R$.  But now we are done, since $a'c'^{-1} = ac^{-1}$ with $a' \in R$, $c' \in \widetilde{\EE}_{JK}$ as required.\end{proof}

\section{The Case of $\OO(SL_3)$}\label{sec:SL3 poisson stuff}

The results of the previous section give us a way to understand the topological structure of $pspec(R)$ in terms of certain commutative algebras and maps between them.  On the other hand, we have yet to see any examples of Poisson algebras which fit into this framework.  With this in mind, we now turn our attention to certain specific Poisson matrix varieties, and verify that they do indeed satisfy the conditions of Proposition~\ref{res:conjecture satisfied if normal generation of primes}.

We begin by defining $\OO(M_2)$, the coordinate ring of all $2 \times 2$ matrices with entries in $k$.  As a commutative algebra this is just the polynomial ring $k[a,b,c,d]$; viewed as the semi-classical limit of the corresponding quantum algebra $\OO_q(M_2)$ (defined in \S\ref{sec:homeomorphism} below) it acquires the structure of a Poisson algebra, with Poisson bracket defined by
\begin{equation}\label{eq:2x2 Poisson relations}\begin{gathered}
\{a,b\} = ab, \qquad \{a,c\} = ac \\
\{b,d\} = bd, \qquad \{c,d\} = cd \\
\{b,c\} = 0,\\
\{a,d\} = 2bc.
\end{gathered}\end{equation}
For more details about the semi-classical limit process, and in particular to see that \eqref{eq:2x2 Poisson relations} does indeed define a Poisson bracket on $\OO(M_2)$, see \cite[\S2]{GoodearlSummary}.

We can now scale this up to define the matrix Poisson variety $\OO(M_{m,p})$: this is the commutative polynomial ring on $mp$ variables $\{Y_{ij}: 1 \leq i \leq m, \ 1 \leq j \leq p\}$, where the Poisson bracket is defined by requiring that all ``rectangles'' of four variables $\{Y_{ij}, Y_{il}, Y_{kj}, Y_{kl}\}$ satisfy a copy of the relations \eqref{eq:2x2 Poisson relations}.  That this is a well-defined Poisson bracket on $\OO(M_{m,p})$ follows again from the general theory of semi-classical limits, e.g. \cite[\S2]{GoodearlSummary}.

If $I \subset \{1, \dots, m\}$ and $J \subset \{1, \dots, p\}$ with $|I| = |J|$, we write $[I|J]$ for the minor with rows in $I$ and columns in $J$.  The determinant in $\OO(M_m)$ will be denoted by $D$; this is always a Poisson-central element (this follows easily from, for example, the Poisson commutation relations for minors in \cite[\S5.1.1]{MeThesis}), and this allows us to define
\[\OO(SL_n) = \OO(M_n)/(D-1), \qquad \OO(GL_n) = \OO(M_n)[D^{-1}],\]
where the Poisson structure in each case is induced by the Poisson bracket on $\OO(M_m)$ as in \eqref{eq:poisson bracket on quotient}, \eqref{eq:extension of bracket to localization}.

As one might expect, all of the Poisson algebras defined above admit rational actions by tori $\HH$ acting by Poisson automorphisms.  In the case of $\OO(M_{m,p})$, the action is given as follows: 
\begin{equation}\label{eq:H action on M and GL}h = (\alpha_1, \dots, \alpha_m, \beta_1, \dots, \beta_p) \in (k^{\times})^{m+p},\qquad  h.Y_{ij} = \alpha_i\beta_jY_{ij}.\end{equation}
When $m=p$, this extends to a rational action on $\OO(GL_m)$.  Meanwhile, for $\OO(SL_m)$ we must instead consider the torus
\begin{equation}\label{eq:torus acting on SL}\{(\alpha_1, \dots, \alpha_m, \beta_1, \dots, \beta_m) \in (k^{\times})^{2m} : \alpha_1\alpha_2\dots\beta_m = 1\} \cong (k^{\times})^{2m-1},\end{equation}
which acts on $\OO(SL_m)$ via the action induced from \eqref{eq:H action on M and GL}.

Finally, each of the algebras $\OO(M_m)$, $\OO(GL_m)$ and $\OO(SL_m)$ admits a transpose automorphism, namely
\[\tau: Y_{ij} \mapsto Y_{ji}\]
which satisfies $\tau(D) = D$.  This is an automorphism of Poisson algebras, and provides a useful tool for reducing the number of computations we need to do when verifying results case-by-case.

\begin{figure}[b]
\setlength{\tabcolsep}{2pt}

\centering
\begin{tabular}{cc|cccccc}

& $\omega_{-}$ & \multirow{2}{*}{321} & \multirow{2}{*}{231} & \multirow{2}{*}{312} & \multirow{2}{*}{132}& \multirow{2}{*}{213}& \multirow{2}{*}{123} \\ 
$\omega_{+}$& &&&&&& \\ \hline 

\multicolumn{2}{c|} {\raisebox{1em}{321}} &

\begin{smallarray}{m321321}
{
 \circ & \circ & \circ \\
\circ & \circ & \circ \\
\circ & \circ & \circ \\
};
\end{smallarray}
&

\begin{smallarray}{m321231}
{
 \circ & & \\
\circ & & \\
\circ & \circ & \circ \\
};
\MyZ(1,2)
\end{smallarray}

&
\begin{smallarray}{m321312}
{
 \circ & \circ & \bullet \\
\circ & \circ & \circ \\
\circ & \circ & \circ \\
};
\end{smallarray}
&

\begin{smallarray}{m321132}
{
 \circ & \bullet & \bullet \\
\circ & \circ & \circ \\
\circ & \circ & \circ \\
};
\end{smallarray}
&

\begin{smallarray}{m321213}
{
 \circ & \circ & \bullet \\
\circ & \circ & \bullet \\
\circ & \circ & \circ \\
};
\end{smallarray}

&
\begin{smallarray}{m321123}
{
 \circ & \bullet & \bullet \\
\circ & \circ & \bullet \\
\circ & \circ & \circ \\
};
\end{smallarray}
\\

\multicolumn{2}{c|}{\raisebox{1em}{231}}
&
\begin{smallarray}{m231321}
{
 \circ & \circ & \circ \\
\circ & \circ & \circ \\
\bullet & \circ & \circ \\
};
\end{smallarray}

&
\begin{smallarray}{m231231}
{
 \circ &  &  \\
\circ &  &  \\
\bullet & \circ & \circ \\
};
\MyZ(1,2)
\end{smallarray}

&
\begin{smallarray}{m231312}
{
 \circ & \circ & \bullet \\
\circ & \circ & \circ \\
\bullet & \circ & \circ \\
};
\end{smallarray}
&

\begin{smallarray}{m231132}
{
 \circ & \bullet & \bullet \\
\circ & \circ & \circ \\
\bullet & \circ & \circ \\
};
\end{smallarray}

&
\begin{smallarray}{m231213}
{
 \circ & \circ & \bullet \\
\circ & \circ & \bullet \\
\bullet & \circ & \circ \\
};
\end{smallarray}

&
\begin{smallarray}{m231123}
{
 \circ & \bullet & \bullet \\
\circ & \circ & \bullet \\
\bullet & \circ & \circ \\
};
\end{smallarray}

\\

\multicolumn{2}{c|}{\raisebox{1em}{312}} 
&
\begin{smallarray}{m312321}
{
 \circ & \circ & \circ \\
 &  & \circ \\
 &  & \circ \\
};
\MyZ(2,1)
\end{smallarray}

&
\begin{smallarray}{m312231}
{
 \circ &  &  \\
 &  &  \\
 &  & \circ \\
};
\MyZ(1,2)
\MyZ(2,1)
\end{smallarray}

&
\begin{smallarray}{m312312}
{
 \circ & \circ & \bullet \\
 &  & \circ \\
 &  & \circ \\
};
\MyZ(2,1)
\end{smallarray}

&
\begin{smallarray}{m312132}
{
 \circ & \bullet & \bullet \\
 &  & \circ \\
 &  & \circ \\
};
\MyZ(2,1)
\end{smallarray}

&
\begin{smallarray}{m312213}
{
 \circ & \circ & \bullet \\
 &  & \bullet \\
 &  & \circ \\
};
\MyZ(2,1)
\end{smallarray}

&
\begin{smallarray}{m312123}
{
 \circ & \bullet & \bullet \\
 &  & \bullet \\
 &  & \circ \\
};
\MyZ(2,1)
\end{smallarray}

\\

\multicolumn{2}{c|}{\raisebox{1em}{132}} 

&
\begin{smallarray}{m132321}
{
 \circ & \circ & \circ \\
 \bullet & \circ & \circ \\
 \bullet & \circ & \circ \\
};
\end{smallarray}
&
\begin{smallarray}{m132231}
{
 \circ &  &  \\
 \bullet &  &  \\
 \bullet & \circ & \circ \\
};
\MyZ(1,2)
\end{smallarray}

&
\begin{smallarray}{m132312}
{
 \circ & \circ & \bullet \\
 \bullet & \circ & \circ \\
 \bullet & \circ & \circ \\
};
\end{smallarray}
&
\begin{smallarray}{m132132}
{
 \circ & \bullet & \bullet \\
 \bullet & \circ & \circ \\
 \bullet & \circ & \circ \\
};
\end{smallarray}
&
\begin{smallarray}{m132213}
{
 \circ & \circ & \bullet \\
 \bullet & \circ & \bullet \\
 \bullet & \circ & \circ \\
};
\end{smallarray}
&
\begin{smallarray}{m132123}
{
 \circ & \bullet & \bullet \\
 \bullet & \circ & \bullet \\
 \bullet & \circ & \circ \\
};
\end{smallarray}
\\

\multicolumn{2}{c|}{\raisebox{1em}{213}} 

&

\begin{smallarray}{m213321}
{
 \circ & \circ & \circ \\
 \circ & \circ & \circ \\
 \bullet & \bullet & \circ \\
};
\end{smallarray}
&
\begin{smallarray}{m213231}
{
 \circ &  &  \\
 \circ &  &  \\
 \bullet & \bullet & \circ \\
};
\MyZ(1,2)
\end{smallarray}

&
\begin{smallarray}{m213312}
{
 \circ & \circ & \bullet \\
 \circ & \circ & \circ \\
 \bullet & \bullet & \circ \\
};
\end{smallarray}
&
\begin{smallarray}{m213132}
{
 \circ & \bullet & \bullet \\
 \circ & \circ & \circ \\
 \bullet & \bullet & \circ \\
};
\end{smallarray}
&
\begin{smallarray}{m213213}
{
 \circ & \circ & \bullet \\
 \circ & \circ & \bullet \\
 \bullet & \bullet & \circ \\
};
\end{smallarray}
&
\begin{smallarray}{m213123}
{
 \circ & \bullet & \bullet \\
 \circ & \circ & \bullet \\
 \bullet & \bullet & \circ \\
};
\end{smallarray}
\\

\multicolumn{2}{c|}{\raisebox{1em}{123}}
&
\begin{smallarray}{m123321}
{
 \circ & \circ & \circ \\
 \bullet & \circ & \circ \\
 \bullet & \bullet & \circ \\
};
\end{smallarray}
&
\begin{smallarray}{m123231}
{
 \circ & & \\
 \bullet &  &  \\
 \bullet & \bullet & \circ \\
};
\MyZ(1,2)
\end{smallarray}

&
\begin{smallarray}{m123312}
{
 \circ & \circ & \bullet \\
 \bullet & \circ & \circ \\
 \bullet & \bullet & \circ \\
};
\end{smallarray}
&
\begin{smallarray}{m123132}
{
 \circ & \bullet & \bullet \\
 \bullet & \circ & \circ \\
 \bullet & \bullet & \circ \\
};
\end{smallarray}
&
\begin{smallarray}{m123213}
{
 \circ & \circ & \bullet \\
 \bullet & \circ & \bullet \\
 \bullet & \bullet & \circ \\
};
\end{smallarray}
&
\begin{smallarray}{m123123}
{
 \circ & \bullet & \bullet \\
 \bullet & \circ & \bullet \\
 \bullet & \bullet & \circ \\
};
\end{smallarray}

\end{tabular}

\caption{Generators for $\HH$-primes in $\OO(SL_3)$.}\label{fig:H_primes_gens}
\end{figure}

We now restrict our attention to the algebra $\OO(SL_3)$.  Recall that a $\HH$-prime of $\OO(SL_3)$ is a Poisson-prime ideal which is stable under the action of $\HH = (k^{\times})^{3}$ as defined in \eqref{eq:H action on M and GL}, \eqref{eq:torus acting on SL} above.  Explicit generating sets are known for each of the 36 $\HH$-primes in $\OO(SL_3)$ and are listed in Figure~\ref{fig:H_primes_gens}.  This diagram originally appeared in \cite[Figure~1]{GL1} and described generators of $\HH$-primes in the quantum algebra $\OO_q(SL_3)$, and it was verified in \cite[Chapter~5]{MeThesis} that we can use the same generating sets to obtain the Poisson $\HH$-primes of $\OO(SL_3)$ over arbitrary fields $k$.  Generators are depicted as follows: the dots in the $3 \times 3$ grids represent the indeterminates $Y_{ij}$, $1 \leq i,j\leq 3$ in the natural way; a black dot in position $(i,j)$ means that $Y_{ij}$ is included in the generating set for that ideal, and a square indicates that the corresponding $2\times 2$ minor is in the generating set.  For example, $\hprime{
 \circ &  \hsq  \\
\circ &  &  \\
\bullet & \circ & \circ 
}$
denotes the ideal $\langle Y_{31}, [12|23]\rangle$.

We retain the notation of \cite{GL1} for indexing the 36 $\HH$-primes by elements of $S_3 \times S_3$: for $\omega = (\omega_{+}, \omega_{-}) \in S_3 \times S_3$, the corresponding $\HH$-prime $I_{\omega}$ has the generating set listed in row $\omega_{+}$ and column $\omega_{-}$ of Figure~\ref{fig:H_primes_gens}.  Elements of $S_3$ are written in a truncated form of two-line notation, e.g. $321$ corresponds to the permutation $1 \mapsto 3$, $2 \mapsto 2$, $3 \mapsto 1$.

Our aim in this section is to verify that $R:=\OO(SL_3)$ satisfies the conditions of Proposition~\ref{res:conjecture satisfied if normal generation of primes}, that is:
\begin{itemize}\item[$\bullet$] for each $J \in \HH$-$pspec(R)$, and for every Poisson prime ideal $P$ in $pspec_J(R)$, $P/J$ has a Poisson-normal generating set in $R/J$.\end{itemize}
In order to do this, we will need to understand the centres $\PZ(R_J)$ and obtain explicit generating sets for the Poisson-primitive ideals in $R$.  This formed part of the author's PhD thesis \cite{MeThesis}; since the proofs are computational and long we simply state the results here and refer the interested reader to \cite[Chapter 5]{MeThesis} for the details.

{\renewcommand{\arraystretch}{5}
{\setlength{\tabcolsep}{1.5em}
\begin{sidewaystable}[h]
\begin{tabular}{c|cccccc}
  & 321 & 231 & 312 & 132 & 213 & 123 \\ \hline
321 & \makecell{$[23|12]Y_{13}^{-1}$ \\ $[12|23]Y_{31}^{-1}$} & $[23|12]Y_{13}^{-1}$ & $Y_{12}Y_{23}Y_{31}^{-1}$ &   &   & $Y_{22}[23|12]Y_{31}^{-1}$ \\
231 & $Y_{21}Y_{32}Y_{13}^{-1}$ & \makecell{$[13|23]Y_{21}^{-1}$ \\ $[12|13]Y_{32}^{-1}$} &   & $Y_{11}Y_{23}Y_{32}^{-1}$ & $Y_{12}Y_{33}Y_{21}^{-1}$ &   \\
312 & $[12|23]Y_{31}^{-1}$ &   & \makecell{$[23|13]Y_{12}^{-1}$ \\ $[13|12]Y_{23}^{-1}$} & $Y_{11}Y_{32}Y_{23}^{-1}$ & $Y_{21}Y_{33}Y_{12}^{-1}$ &   \\
132 &   & $Y_{11}Y_{23}Y_{32}^{-1}$ & $Y_{11}Y_{32}Y_{23}^{-1}$ & \makecell{$Y_{11}$ \\ $Y_{23}Y_{32}^{-1}$} &   & $Y_{11}$ \\
213 &   & $Y_{12}Y_{33}Y_{21}^{-1}$ & $Y_{21}Y_{33}Y_{12}^{-1}$ &   & \makecell{$Y_{33}$ \\ $Y_{12}Y_{21}^{-1}$} & $Y_{33}$ \\
123 & $Y_{22}[12|23]Y_{13}^{-1}$ &   &   & $Y_{11}$ & $Y_{33}$ & \makecell{$Y_{11}$ \\ $Y_{22}$}
\end{tabular}\caption{Indeterminates for Poisson centres of $R_{I_{\omega}}$, where $R = \OO(SL_3)$.  Blank entries indicate that the corresponding centre is trivial.}\label{tab:gens of centre}
\end{sidewaystable}
}}

\begin{proposition}\label{res:poisson centre gens for sl3}  
Let $R = \OO(SL_3)$, and let $I_{\omega}$ be a Poisson $\HH$-prime in $R$.  Then the corresponding Poisson centre $\PZ(R_{I_{\omega}})$ is a Laurent polynomial ring in the indeterminates in position $\omega$ of Table~\ref{tab:gens of centre}.  (A blank entry in the table indicates that the corresponding Poisson centre is trivial, i.e. $\PZ(R_{I_{\omega}}) = k$ for that value of $\omega$.)
\end{proposition}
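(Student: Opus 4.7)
The Poisson Stratification Theorem~\ref{res:poisson stratification}(1) already guarantees that each $\PZ(R_{I_\omega})$ is a commutative Laurent polynomial ring of rank at most $5$, so the task is simply to exhibit an explicit system of generators. The plan is to proceed case-by-case through the thirty-six $\HH$-primes, using two labour-saving observations: first, the transpose automorphism $\tau$ is Poisson and permutes the $\HH$-primes of Figure~\ref{fig:H_primes_gens}, so it suffices to treat one representative of each $\tau$-orbit and transport the generators in Table~\ref{tab:gens of centre} along $\tau$; second, the global Poisson-central element $D$ satisfies $D = 1$ in $\OO(SL_3)$, and this yields one linear relation among the candidate centre generators which we must factor out at the end.

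For a fixed $\omega$, my strategy has four steps. First, read off the generators of $I_\omega$ from Figure~\ref{fig:H_primes_gens} and compute an explicit presentation of $R/I_\omega$; in cases whose generating set contains a $2\times 2$ minor, use that relation to eliminate one of the $Y_{ij}$'s algebraically. Second, identify the surviving $Y_{ij}$ that are non-zero and homogeneous in $R/I_\omega$ and pass to $R_{I_\omega} = (R/I_\omega)[\EE_{I_\omega}^{-1}]$; the expectation (verified in each case by inspection of the Poisson relations \eqref{eq:2x2 Poisson relations} transferred to $R/I_\omega$) is that $R_{I_\omega}$ becomes a Poisson torus of the form $k[z_1^{\pm 1},\dots,z_n^{\pm 1}]/(D-1)$ with log-canonical bracket $\{z_i,z_j\} = \lambda_{ij} z_i z_j$ for an explicit skew-symmetric integer matrix $\lambda$.

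Third, invoke the standard fact that in such a Poisson torus a Laurent monomial $z^a = \prod z_i^{a_i}$ is Poisson central if and only if the exponent vector $a$ lies in $\ker(\lambda)$, so that $\PZ(k[z^{\pm 1}]) = k[\ker(\lambda)]$ is a Laurent polynomial ring of rank $\operatorname{corank}(\lambda)$. Quotienting by the Poisson-central principal ideal $(D-1)$ reduces this rank by one. Fourth, verify directly using \eqref{eq:2x2 Poisson relations}, the Leibniz rule, and the localized bracket formula \eqref{eq:extension of bracket to localization} that the specific elements listed in position $\omega$ of Table~\ref{tab:gens of centre} are Poisson central, and check (by computing the corresponding integer exponent vectors) that they span a complement to the line spanned by the exponents of $D$ inside $\ker(\lambda)$. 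This exhibits them as a full set of algebraically independent generators for $\PZ(R_{I_\omega})$.

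The main obstacle will be Step~2: producing the Poisson-torus presentation of $R_{I_\omega}$, especially for the mixed entries whose generating set contains both individual $Y_{ij}$'s and a $2\times 2$ minor (the boxed entries of Figure~\ref{fig:H_primes_gens}), where one must use the minor relation to eliminate a variable and then check that the induced bracket on the remaining variables is genuinely log-canonical rather than merely multiplicative up to lower-order terms. Once this is done, Steps~3 and~4 are essentially linear algebra on exponent vectors and routine Leibniz-rule verifications; the handful of blank entries in the table correspond exactly to those $\omega$ for which $\ker(\lambda)$ is spanned by the exponent vector of $D$ alone.
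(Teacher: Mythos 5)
The paper gives no internal proof of this proposition; it simply cites \cite[Proposition~5.3.14]{MeThesis}.  So the comparison can only be against what your sketch would actually establish, and there I see a genuine gap in Step~2.

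You assert that the full $\HH$-simple localization $R_{I_\omega} = (R/I_\omega)[\EE_{I_\omega}^{-1}]$ becomes a Poisson torus with log-canonical bracket.  But $\EE_{I_\omega}$ is the set of \emph{all} non-zero homogeneous elements, and for most $\omega$ this set contains non-monomial elements: the $\HH$-grading is by $\mathbb{Z}^{5}$ while $R/I_\omega$ can have Krull dimension up to~$8$, so homogeneous components typically have dimension $\ge 2$ (e.g.\ for $\omega = (321,321)$ the element $Y_{11}Y_{22}Y_{33}-Y_{12}Y_{23}Y_{31}$ is homogeneous and non-monomial).  Inverting such elements destroys the Laurent-polynomial structure, so $R_{I_\omega}$ is not of the form $k[z_1^{\pm1},\dots,z_n^{\pm1}]$.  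What is true — and what the paper does systematically in Section~\ref{sec:homeomorphism}, see Example~\ref{ex:invert stuff in sl3} — is that a \emph{smaller} localization of $R/I_\omega$, at a finite set of minors and after a change of variables to minors, is a genuine algebra of type $P_{\mathbf{q}}^{r,n}$.  One then still needs a separate argument, in the spirit of Lemma~\ref{res:shrinking E_JK set} and using formula~\eqref{eq:poisson centre of localization}, that the Poisson centre of this smaller localization already exhausts $\PZ(R_{I_\omega})$.  Your plan omits this comparison-of-centres step, and without it the corank computation in Step~3 gives only a lower bound on the number of generators, not the centre itself.

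A secondary issue: ``quotienting by $(D-1)$ reduces the rank by one'' is not quite how the determinant relation enters.  In $\OO(SL_3)$ the relation $D=1$ is already imposed; after inverting $[23|23]$ and $Y_{33}$ one uses it to \emph{eliminate} $Y_{11}$ as a generator (Example~\ref{ex:invert stuff in sl3}), so the resulting algebra is an honest Poisson torus with no residual relation, or alternatively one computes in $\OO(GL_3)$ and uses $\OO(GL_3)\cong\OO(SL_3)[z^{\pm1}]$ as in Proposition~\ref{res:centres of h simple localizations are the same}.  Framing it as ``compute $\PZ$ of a torus, then quotient by $(D-1)$'' would require a separate argument that $\PZ$ commutes with that quotient, which you have not supplied.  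The remaining ingredients — the use of $\tau$ to halve the case analysis, the linear-algebra characterization of central monomials, and the Leibniz-rule verification in Step~4 — are sound.
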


\begin{proof}
\cite[Proposition~5.3.14]{MeThesis}.
\end{proof}
For $\omega \in S_3 \times S_3$, let $\{U_1V_1^{-1}, \dots, U_dV_d^{-1}\}$ denote the generating set for $\PZ(R_{I_{\omega}})$ given in Table~\ref{tab:gens of centre}.  Recall that by Theorem~\ref{res:poisson stratification}, $pprim_{I_{\omega}}(R)$ is homeomorphic to $max(\PZ(R_{I_{\omega}}))$; since $k$ is assumed to be algebraically closed, the maximal ideals of $\PZ(R_{I_{\omega}})$ are
\[M_{\lambda} = \langle U_1V_1^{-1} - \lambda_1, \dots, U_dV_d^{-1} - \lambda_d\rangle, \quad \lambda = (\lambda_1, \dots, \lambda_d) \in (k^{\times})^d,\]
and so the Poisson primitive ideals in $R/I_{\omega}$ are precisely those of the form
\[P_{\lambda} = \big(M_{\lambda}R_{I_{\omega}}\big) \cap R/I_{\omega}.\]
A natural guess at a generating set for $P_{\lambda}$ in $R/I_{\omega}$ would be $\{U_1 - \lambda_1 V_1, \dots, U_d - \lambda_d V_d\}$.  It is quite easy to check that these generators are Poisson-normal in $R/I_{\omega}$ (e.g. see the proof of Theorem~\ref{res:Poisson normal generators for SL3} below) and hence that $\langle U_1 - \lambda_1 V_1, \dots, U_d - \lambda_d V_d \rangle$ is a Poisson ideal, but as yet there is no general approach to proving that this ideal is actually $P_{\lambda}$ itself.  For $\OO(SL_3)$ this is again done using case-by-case analysis in \cite{MeThesis}, and we quote the relevant result below.

{\renewcommand{\arraystretch}{5}
{\setlength{\tabcolsep}{1em}
\small
\begin{sidewaystable}[t!]
\begin{tabular}{c|cccccc}
  & 321 & 231 & 312 & 132 & 213 & 123 \\ \hline
321 & \makecell{$[23|12] - \lambda_1 Y_{13}$ \\ $[12|23] - \lambda_2 Y_{31}$} & $[23|12] - \lambda_1 Y_{13}$ & $Y_{12}Y_{23} - \lambda_1 Y_{31}$ & $0$ & $0$ & $Y_{22}[23|12] - \lambda_1 Y_{31}$ \\
231 & $Y_{21}Y_{32} - \lambda_1 Y_{13}$ & \makecell{$[13|23] - \lambda_1 Y_{21}$ \\ $[12|13] - \lambda_2 Y_{32}$} & $0$ & $Y_{11}Y_{23} - \lambda_1 Y_{32}$ & $Y_{12}Y_{33} - \lambda_1 Y_{21}$ & $0$ \\
312 & $[12|23] - \lambda_1 Y_{31}$ & $0$ & \makecell{$[23|13] - \lambda_1 Y_{12}$ \\ $[13|12] - \lambda_2 Y_{23}$} & $Y_{11}Y_{32} - \lambda_1 Y_{23}$ & $Y_{21}Y_{33} - \lambda_1 Y_{12}$ & $0$ \\
132 & $0$ & $Y_{11}Y_{23} - \lambda_1 Y_{32}$ & $Y_{11}Y_{32} - \lambda_1 Y_{23}$ & \makecell{$Y_{11} - \lambda_1$ \\ $Y_{23} - \lambda_2 Y_{32}$} & $0$ & $Y_{11} - \lambda_1$ \\
213 & $0$ & $Y_{12}Y_{33} - \lambda_1 Y_{21}$ & $Y_{21}Y_{33} - \lambda_1 Y_{12}$ & $0$ & \makecell{$Y_{33} -\lambda_1$ \\  $Y_{12} - \lambda_2 Y_{21}$} & $Y_{33} - \lambda_1$ \\
123 & $Y_{22}[12|23] - \lambda_1 Y_{13}$ & $0$ & $0$ & $Y_{11} -\lambda_1$ & $Y_{33} - \lambda_1$ & \makecell{$Y_{11} - \lambda_1$ \\ $Y_{22} - \lambda_2$}
\end{tabular}
\caption{Generators for Poisson primitive ideals in $\OO(SL_3)/I_{\omega}$; $\lambda_1$, $\lambda_2$ are arbitrary scalars in $k^{\times}$.}\label{tab:gens for primitives}
\end{sidewaystable}
}
}

\begin{theorem} 
Let $R = \OO(SL_3)$, let $I_{\omega}$ be a Poisson $\HH$-prime in $R$, and assume that $k$ is algebraically closed of characteristic 0.  Then generating sets for the Poisson-primitive ideals of $R/I_{\omega}$ are given in position $\omega$ of Table~\ref{tab:gens for primitives}.
\end{theorem}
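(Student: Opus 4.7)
The plan is to verify the theorem by showing that the ideal $L_\lambda$ of $R/I_\omega$ generated by the entries of Table~\ref{tab:gens for primitives} coincides with $P_\lambda = M_\lambda R_{I_\omega} \cap R/I_\omega$. The first step is to confirm that each proposed generator $U_i - \lambda_i V_i$ is Poisson-normal in $R/I_\omega$: using Proposition~\ref{res:poisson centre gens for sl3}, the Poisson-centrality of $U_i V_i^{-1}$ in $R_{I_\omega}$ forces $\{U_i, r\} V_i \equiv U_i \{V_i, r\}$ modulo $I_\omega$ for every $r \in R/I_\omega$, and the Leibniz rule then yields $\{U_i - \lambda_i V_i, r\} \in (U_i - \lambda_i V_i) \cdot (R/I_\omega)$. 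In particular $L_\lambda$ is a Poisson ideal.

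Next, the containment $L_\lambda \subseteq P_\lambda$ comes for free: in $R_{I_\omega}$ we have $U_i - \lambda_i V_i = V_i(U_i V_i^{-1} - \lambda_i) \in M_\lambda R_{I_\omega}$, so each generator lies in $M_\lambda R_{I_\omega} \cap R/I_\omega = P_\lambda$. Extending back to $R_{I_\omega}$ and using that $V_i$ is a unit there, $L_\lambda R_{I_\omega}$ contains each $U_i V_i^{-1} - \lambda_i$, whence $L_\lambda R_{I_\omega} = M_\lambda R_{I_\omega} = P_\lambda R_{I_\omega}$. Therefore $L_\lambda = P_\lambda$ if and only if $L_\lambda R_{I_\omega} \cap R/I_\omega = L_\lambda$; equivalently, every element of $\EE_{I_\omega}$ must be regular modulo $L_\lambda$.

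The remainder of the argument is a case-by-case analysis over the 36 Poisson $\HH$-primes $I_\omega$. Using the explicit generators of $I_\omega$ from Figure~\ref{fig:H_primes_gens}, each quotient $R/I_\omega$ is a concrete commutative algebra, and the relations $U_i = \lambda_i V_i$ coming from $L_\lambda$ permit further elimination of variables; in each case $(R/I_\omega)/L_\lambda$ can then be presented as a polynomial or Laurent polynomial algebra in a small number of remaining indeterminates. Once this presentation is in hand, the quotient is manifestly a domain, and the images in it of the $\HH$-eigenvectors of $R/I_\omega$ (easily enumerated from the $\HH$-grading) are visibly nonzero, yielding the required regularity.

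The main obstacle is the sheer bookkeeping. Two devices keep it manageable. First, the transpose automorphism $\tau: Y_{ij} \mapsto Y_{ji}$ identifies the data for $(\omega_+, \omega_-)$ with that for $(\omega_-, \omega_+)$ and maps the corresponding entries of Tables~\ref{tab:gens of centre} and~\ref{tab:gens for primitives} to one another, roughly halving the number of cases to be done explicitly. Second, most $\HH$-primes kill a large block of the $Y_{ij}$, so $R/I_\omega$ already involves only a handful of variables and the identification of $(R/I_\omega)/L_\lambda$ is almost immediate. The genuinely substantive cases are the few diagonal entries (e.g.\ $\omega_+ = \omega_- \in \{321,231,312\}$) where the Poisson centre has rank two and the elimination must be carried out in full.
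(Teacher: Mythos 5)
The paper defers the proof of this statement to \cite[Theorem~5.4.3]{MeThesis}, so a direct comparison with the paper's text is not possible; but your overall strategy --- show $L_\lambda \subseteq P_\lambda$ with equal extensions to $R_{I_\omega}$, then prove that $L_\lambda$ is $\EE_{I_\omega}$-saturated --- is the natural one. Two steps in your sketch need repair. The Poisson-normality of $U_i - \lambda_i V_i$ does not follow from the identity $\{U_i,r\}V_i = U_i\{V_i,r\}$ and the Leibniz rule alone: these only give $\{U_i - \lambda_i V_i, r\}\,V_i = (U_i - \lambda_i V_i)\{V_i,r\}$, and to cancel $V_i$ one needs $V_i \mid \{V_i, r\}$, i.e.\ that $V_i$ is itself Poisson-normal in $R/I_\omega$. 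That is an additional fact about the specific choice of denominators in Table~\ref{tab:gens of centre} (and it is exactly what the proof of Theorem~\ref{res:Poisson normal generators for SL3} relies on), not an automatic consequence of $U_iV_i^{-1}$ being Poisson-central.

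The more serious gap is in the saturation step. You assert that after eliminating variables, $(R/I_\omega)/L_\lambda$ is always a polynomial or Laurent polynomial algebra, so that domain-ness is ``manifest.'' This is very doubtful in the diagonal cases. For $\omega = (321,321)$ one has $L_\lambda = \langle [23|12] - \lambda_1 Y_{13},\ [12|23] - \lambda_2 Y_{31}\rangle$ inside $\OO(SL_3)$, and trying to solve the two relations for $Y_{13}, Y_{31}$ requires inverting $\lambda_1\lambda_2 - Y_{22}^2$, which is not a unit; there is no evident (Laurent) polynomial presentation of this quotient, and similar problems appear for $\omega = (231,231)$ and $(312,312)$. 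A genuine primality argument (or at least a proof that no associated prime of $L_\lambda$ meets $\EE_{I_\omega}$) is required here, and this is precisely where the cited thesis does its ``computational and long'' case-by-case work. Finally, the phrase ``images of the $\HH$-eigenvectors are visibly nonzero'' is not the right formulation: $\EE_{I_\omega}$ consists of \emph{all} nonzero homogeneous elements and cannot be checked generator by generator. The cleaner route is to note that $L_\lambda \subseteq P_\lambda$ already forces $L_\lambda \cap \EE_{I_\omega} = \emptyset$ (because $P_\lambda$ lies in the zero stratum of $R/I_\omega$), after which primality of $L_\lambda$ alone yields saturation.
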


\begin{proof}
\cite[Theorem~5.4.3]{MeThesis}.
\end{proof}
We will need one further result from \cite{MeThesis}, which is the following:

\begin{proposition}\label{res:SL3 ufd} 
Let $I_{\omega}$ be a Poisson $\HH$-prime in $R$.  Then the quotient $R/I_{\omega}$ is a UFD (in the standard commutative sense).
\end{proposition}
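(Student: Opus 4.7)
The plan is to verify the UFD property case-by-case for each of the 36 Poisson $\HH$-primes $I_\omega$ listed in Figure~\ref{fig:H_primes_gens}, exploiting the fact that each quotient $R/I_\omega$ has a very explicit presentation. First I would use the transpose Poisson automorphism $\tau: Y_{ij}\mapsto Y_{ji}$: this sends $D$ to $D$ and permutes the $\HH$-primes, so it identifies quotients pairwise and roughly halves the workload. The remaining cases can then be organized by which variables $Y_{ij}$ appear as generators of $I_\omega$, which is the coarsest feature of the pictures in Figure~\ref{fig:H_primes_gens}.

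For each $I_\omega$, the concrete reduction proceeds as follows. Setting the variables $Y_{ij}\in I_\omega$ to zero in $R$ kills several entries of the generic matrix, and the defining equation $D-1=0$ collapses to a much shorter polynomial relation in the surviving $Y_{ij}$; similarly the $2\times 2$ minor generators of $I_\omega$ simplify. For most $\omega$ (those with many variable generators), the simplified determinant relation exhibits one of the remaining variables as a unit, so that $R/I_\omega$ is isomorphic to a localization of a polynomial ring in few variables, hence manifestly a UFD. For the handful of cases where a genuine minor relation persists, I would localize $R/I_\omega$ at one of the distinguished elements appearing in Table~\ref{tab:gens of centre} — a generator of $\PZ(R_{I_\omega})$ — then verify that the localized algebra is either a Laurent polynomial ring or a clean single-relation UFD, and invoke Nagata's criterion (a Noetherian domain $A$ is a UFD provided $A[S^{-1}]$ is a UFD for a multiplicative set $S$ generated by prime elements) to descend back to $R/I_\omega$.

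Two subsidiary points need attention. One is that I have to know $R/I_\omega$ is a domain to begin with — this is exactly the statement that $I_\omega$ is Poisson-prime, which is already built into the classification used for Figure~\ref{fig:H_primes_gens}. The other is that the elements of $R/I_\omega$ at which I localize in Nagata's criterion must in fact be prime (not merely irreducible); this is checked directly, using that after localization the quotient is a polynomial or Laurent polynomial algebra, where primality of the images is transparent.

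The main obstacle is the intermediate regime — the $\omega$ for which $I_\omega$ has only one or two generators, so that the quotient ring is largest and carries a nontrivial determinantal constraint. Here the direct reduction to a polynomial ring is not immediate, and one really needs the localize-and-descend strategy together with explicit knowledge of the Poisson centres from Proposition~\ref{res:poisson centre gens for sl3}. Fortunately, there are only a few such cases (the $\omega$ close to the $\HH$-prime $(0)$ in the stratification poset), and for each the Poisson-central generators in Table~\ref{tab:gens of centre} provide natural candidates to invert; once inverted, the rings become small enough that the UFD property can be read off.
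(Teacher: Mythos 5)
The overall strategy — case-by-case over the 36 $\HH$-primes, halving the work via $\tau$, and reducing to localizations of polynomial rings with Nagata's criterion for the cases where a genuine relation survives — is a reasonable one and plausibly in the spirit of the cited thesis argument (the paper itself only gives a citation). However, as written the proposal has a concrete gap in its choice of localizing elements.

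You propose to invert the denominators appearing in Table~\ref{tab:gens of centre}. These are not reliably prime in $R/I_\omega$. Take $\omega=(321,231)$, so $I_\omega=\langle[12|23]\rangle$; Table~\ref{tab:gens of centre} suggests the element $[23|12]Y_{13}^{-1}$, pointing at $Y_{13}$. But
\[
\langle [12|23],\,Y_{13}\rangle \;=\; \langle Y_{12}Y_{23},\,Y_{13}\rangle \quad\text{in }\OO(SL_3),
\]
and since $\OO(SL_3)/(Y_{13})$ is a domain in which $Y_{12}$ and $Y_{23}$ are nonzero non-units, the quotient $\OO(SL_3)/\langle Y_{12}Y_{23},Y_{13}\rangle$ has zero-divisors. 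So $Y_{13}$ is not prime in $R/I_\omega$ and cannot be inverted in Nagata's criterion. The elements that \emph{are} guaranteed prime in $R/I_\omega$ are precisely those $u$ for which $\langle I_\omega,u\rangle$ is a larger Poisson $\HH$-prime (or at least a prime ideal that can be identified); these are read off Figure~\ref{fig:H_primes_gens} by looking at the $\HH$-primes strictly above $I_\omega$, not from Table~\ref{tab:gens of centre}. Relatedly, your explanation of how primality is ``checked directly, using that after localization the quotient is a polynomial or Laurent polynomial algebra'' is logically backwards: primality must be verified in $R/I_\omega$ (or iteratively in partial localizations) by showing the relevant \emph{quotient} is a domain, not by looking at the fully localized ring.

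The base case $I_\omega=0$ is also thinner than suggested. Inverting $Y_{13}$ and $Y_{31}$ alone in $\OO(SL_3)$ does not exhibit the ring as a localization of a polynomial ring: attempting to eliminate $Y_{22}$ from the relation $D=1$ leads to a circular equation, since the surviving minors $[23|23]$ still involve $Y_{22}$. One either needs a longer chain of inverted primes (e.g.\ $Y_{33}$ and $[23|23]$ as in Example~\ref{ex:invert stuff in sl3}, plus more), or a different argument entirely such as $\mathrm{Cl}\big(\OO(SL_3)\big)\cong \mathrm{Cl}\big(\OO(SL_3)[z^{\pm1}]\big)=\mathrm{Cl}\big(\OO(GL_3)\big)=0$ via the Levasseur--Stafford isomorphism. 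With the localizing elements chosen correctly (from the poset of $\HH$-primes rather than from the central fractions) and the $J=0$ case handled separately, the proposal can be made to work, but as stated it would fail on several strata.
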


\begin{proof}
\cite[Proposition~5.3.19]{MeThesis}.
\end{proof}

We are now ready to verify that $\varphi_{JK} = f_{JK}^{\circ}\overline{|}g_{JK}^{\circ}$ in $\OO(SL_3)$.

\begin{theorem}\label{res:Poisson normal generators for SL3}
Let $R = \OO(SL_3)$, let $I_{\omega}$ be a Poisson $\HH$-prime in $R$, and assume that $k$ is algebraically closed of characteristic 0.  Then for any $P \in pspec_{I_{\omega}}(R)$, the ideal $P/I_{\omega}$ has a generating set consisting of Poisson-normal elements in $R/I_{\omega}$.
\end{theorem}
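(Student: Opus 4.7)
The plan is to reduce the statement to a finite, explicit collection of computations by combining the Stratification Theorem with the two structural results of \cite{MeThesis} recorded above.

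By Theorem~\ref{res:poisson stratification}, every $P \in pspec_{I_\omega}(R)$ corresponds bijectively to a prime $\mathfrak{p}$ of $\PZ(R_{I_\omega})$ via $\mathfrak{p} = P R_{I_\omega} \cap \PZ(R_{I_\omega})$ and $P/I_\omega = \mathfrak{p} R_{I_\omega} \cap R/I_\omega$. By Proposition~\ref{res:poisson centre gens for sl3}, $\PZ(R_{I_\omega}) = k[z_1^{\pm 1}, \ldots, z_d^{\pm 1}]$ with $d \leq 2$, where $z_i = U_i V_i^{-1}$ for homogeneous elements $U_i, V_i \in R/I_\omega$ drawn from Table~\ref{tab:gens of centre}; this is a UFD of Krull dimension $d$. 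I organise the argument by the height of $\mathfrak{p}$.

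If $\mathfrak{p} = (0)$ then $P = I_\omega$ and $P/I_\omega = (0)$ needs no generators. If $\mathrm{ht}(\mathfrak{p}) = 1$ then $\mathfrak{p} = (f)$ is principal in the Laurent polynomial ring. Because $f$ is Poisson-central in $R_{I_\omega}$, the extended ideal $(f) R_{I_\omega}$ is a Poisson ideal, and the Leibniz rule forces its contraction $P/I_\omega$ to be a Poisson ideal of $R/I_\omega$. By Proposition~\ref{res:SL3 ufd}, $R/I_\omega$ is a UFD, so this contraction of a principal ideal is itself principal (one takes any lift of $f$ to $R/I_\omega$ and divides out its homogeneous irreducible factors), say $P/I_\omega = (h)$. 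But $\{h, r\} \in P/I_\omega = (h)$ for every $r \in R/I_\omega$ is exactly the statement that $h$ is Poisson-normal, so this case is dispatched with no case analysis at all.

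The only primes left are the maximal ideals of $\PZ(R_{I_\omega})$, which correspond to the Poisson-primitive ideals in $R/I_\omega$. By the Nullstellensatz these are $M_\lambda = (z_1 - \lambda_1, \ldots, z_d - \lambda_d)$, and explicit generating sets for the corresponding $P_\lambda/I_\omega$ are given by $\{U_i - \lambda_i V_i : 1 \leq i \leq d\}$ in Table~\ref{tab:gens for primitives}. Expanding the Poisson-centrality of $U_i V_i^{-1}$ via \eqref{eq:extension of bracket to localization} yields the identity
\[ \{U_i, r\} V_i = U_i \{V_i, r\} \qquad \text{for all } r \in R/I_\omega. \]
If at least one of $U_i, V_i$ is Poisson-normal in $R/I_\omega$, this identity forces the other to be Poisson-normal with the \emph{same} normalisation (cancelling in the domain $R/I_\omega$), and then $\{U_i - \lambda_i V_i, r\} = (U_i - \lambda_i V_i)\alpha_r$, so $U_i - \lambda_i V_i$ is Poisson-normal as required.

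I expect the main obstacle to be the one remaining claim: for each of the 36 $\HH$-strata, at least one of $U_i, V_i$ in the relevant entry of Table~\ref{tab:gens of centre} is Poisson-normal in $R/I_\omega$. This is a finite but bookkeeping-heavy case analysis. All elements in Table~\ref{tab:gens of centre} are either an indeterminate $Y_{ij}$, the constant $1$ (trivially Poisson-normal, which handles immediately each case where a $V_i$ equals $1$), or a $2 \times 2$ minor, and their Poisson brackets with the generators of $R/I_\omega$ are directly computable from \eqref{eq:2x2 Poisson relations} together with the Leibniz rule. For each case one verifies that every such bracket lies in the principal ideal generated by the element in question modulo $I_\omega$, using the explicit generators of $I_\omega$ from Figure~\ref{fig:H_primes_gens}. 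The transpose automorphism $\tau : Y_{ij} \mapsto Y_{ji}$ is a Poisson automorphism that permutes the $\HH$-strata by swapping $\omega_+ \leftrightarrow \omega_-$, so it roughly halves the number of independent cases, but the bookkeeping remains the principal difficulty.
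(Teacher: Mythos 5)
Your argument takes essentially the same route as the paper: split by the height of $P/I_{\omega}$, which Table~\ref{tab:gens of centre} bounds by 2; dismiss the height-0 case; use Proposition~\ref{res:SL3 ufd} to get principality (hence Poisson-normality of the generator) in height 1; and in height 2 use the identity $\{r,U\}V = \{r,V\}U$ derived from Poisson-centrality of $UV^{-1}$ to transfer normality from one of $U,V$ to the difference $U-\lambda V$. The one input you flag as ``the main obstacle'' --- that at least one of $U_i, V_i$ from Table~\ref{tab:gens of centre} is Poisson-normal in $R/I_{\omega}$, for each of the 36 strata --- is precisely the point the paper handles by citing \cite[Lemma~5.4.1]{MeThesis}: the denominators $V_i$ in that table are deliberately chosen to be Poisson-normal, so the finite case-check you anticipate has already been carried out there. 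Your slight strengthening (it suffices that \emph{either} $U_i$ or $V_i$ be Poisson-normal, since the identity forces the other to share the same normaliser) is correct and a minor improvement in flexibility, but since the table already guarantees normality of every $V_i$, it confers no extra mileage here. One small inefficiency: in the height-1 case you rederive that $P/I_{\omega}$ is a Poisson ideal from the extension--contraction machinery, but this is automatic since $P$ is a Poisson prime; the paper simply invokes that a height-1 prime in a UFD is principal and that a principal Poisson ideal has a Poisson-normal generator.
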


\begin{proof}
To simplify the notation, write $J$ for $I_{\omega}$.  From Table~\ref{tab:gens of centre}, we see that $P/J$ has height at most 2.  If $P = J$ then the result is trivial.  When $ht(P/J) =1$, $P/J$ must be principally generated since $R/J$ is a UFD by Proposition~\ref{res:SL3 ufd}; since $P/J$ is a Poisson ideal, this generator must be Poisson-normal.  

This leaves only the case where  $ht(P/J) = 2$, i.e. $P/J$ is maximal in its stratum and hence it is Poisson-primitive.  All that remains is to show that the generating sets given in Table~\ref{tab:gens for primitives} consist of Poisson-normal elements in $R/J$.

Observe that every generator in Table~\ref{tab:gens for primitives} has the form $U -\lambda V$, where $UV^{-1}$ is one of the Poisson-central generators of $\PZ(R_J)$ listed in Table~\ref{tab:gens of centre}.  Since $UV^{-1}$ will also be Poisson-central in $R/J$, we compute that for any $a \in R/J$,
\begin{equation}\label{eq:thing with Poisson centrality}
0 = \{a,UV^{-1}\} = \{a,U\}V^{-1} - \{a,V\}UV^{-2} \implies \{a,U\}V = \{a,V\}U.
\end{equation}
Further, the generators in Table~\ref{tab:gens of centre} have been chosen to ensure that the denominator $V$ is itself always Poisson-normal in $R/J$ (see \cite[Lemma~5.4.1]{MeThesis}).  In other words, for each $a \in R/J$, there is some $a_V \in R/J$ such that $\{a,V\} = a_VV$.  Combining this with \eqref{eq:thing with Poisson centrality}, we have:
\begin{align*}
\{a,U-\lambda V\}V &= \{a,U\}V - \lambda\{a,V\}V \\
&=  \{a,V\}U - \lambda\{a,V\}V\\
&= a_VV(U-\lambda V).
\end{align*}
Since $R/J$ is a domain, we can cancel the $V$ on each side to obtain $\{R/J,U-\lambda V\} \subseteq (U-\lambda V)R/J$, i.e. $U-\lambda V$ is Poisson-normal as required.\end{proof}

\begin{corollary}\label{res:Poisson conjecture holds for SL3}
For any pair of Poisson $\HH$-primes $J \subset K$ in $\OO(SL_3)$, we have
\[\varphi_{JK} = f_{JK}^{\circ}\overline{|}g_{JK}^{\circ},\]
where $f_{JK}^{\circ}\overline{|}g_{JK}^{\circ}$ is defined as in \eqref{eq:definition of overline notation}.
\end{corollary}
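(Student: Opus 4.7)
The plan is to chain together three results that have all been assembled by this point in the paper, so that the corollary follows with essentially no new content. Specifically, I would apply Theorem~\ref{res:Poisson normal generators for SL3}, Proposition~\ref{res:conjecture satisfied if normal generation of primes}, and Proposition~\ref{res:main equality of maps result} in sequence.

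First, fix an arbitrary pair of Poisson $\HH$-primes $J \subset K$ in $R = \OO(SL_3)$. The goal is to verify condition \eqref{eq:main condition for equality of the two maps} of Proposition~\ref{res:main equality of maps result} for this pair, i.e.\ that
\[
(P/J) \cap \PZ_{JK} \cdot \EE_{JK} \subseteq Q/J \implies P \subseteq Q
\]
for every $P \in pspec_J(R)$ and every $Q \in pspec_K(R)$. Fix such a $P$. By Theorem~\ref{res:Poisson normal generators for SL3}, the ideal $P/J$ admits a generating set consisting of Poisson-normal elements of $R/J$. This is precisely the hypothesis of Proposition~\ref{res:conjecture satisfied if normal generation of primes}, so that proposition applies to $P$ and delivers condition \eqref{eq:main condition for equality of the two maps} for this particular $P$.

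Since $P \in pspec_J(R)$ was arbitrary, condition \eqref{eq:main condition for equality of the two maps} holds for all $P \in pspec_J(R)$ and all $Q \in pspec_K(R)$. Applying Proposition~\ref{res:main equality of maps result} then gives the desired equality $\varphi_{JK} = f_{JK}^{\circ}\overline{|}g_{JK}^{\circ}$ on all closed sets $Y \in CL(pspec_J(R))$, which is exactly the conclusion of the corollary.

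There is no genuine obstacle here: all the work has been done already. The computational content, namely the case-by-case construction of the Poisson-normal generators in Table~\ref{tab:gens for primitives} together with the UFD property from Proposition~\ref{res:SL3 ufd}, was absorbed into the proof of Theorem~\ref{res:Poisson normal generators for SL3}; the conceptual content, namely the translation from normal generation to the inclusion condition \eqref{eq:main condition for equality of the two maps} and from that condition to the equality of maps, was absorbed into Propositions~\ref{res:conjecture satisfied if normal generation of primes} and~\ref{res:main equality of maps result}. The corollary is therefore a one-line invocation of these three results.
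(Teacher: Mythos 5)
Your proof is correct and is essentially identical to the paper's own one-line proof, which simply cites the same three results (Proposition~\ref{res:main equality of maps result}, Proposition~\ref{res:conjecture satisfied if normal generation of primes}, and Theorem~\ref{res:Poisson normal generators for SL3}) in combination. You have merely spelled out the logical chaining that the paper leaves implicit.
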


\begin{proof}
Combine Proposition~\ref{res:main equality of maps result}, Proposition~\ref{res:conjecture satisfied if normal generation of primes} and Theorem~\ref{res:Poisson normal generators for SL3}.
\end{proof}

\begin{corollary}\label{res:Poisson conjecture holds for GL2}
For any pair of Poisson $\HH$-primes $J \subset K$ in $\OO(GL_2)$, we have
\[\varphi_{JK} = f_{JK}^{\circ}\overline{|}g_{JK}^{\circ}.\]
\end{corollary}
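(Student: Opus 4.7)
The plan is to follow the template of Corollary~\ref{res:Poisson conjecture holds for SL3} verbatim. By Propositions~\ref{res:main equality of maps result} and \ref{res:conjecture satisfied if normal generation of primes}, it is enough to show that for every Poisson $\HH$-prime $J$ of $R := \OO(GL_2)$ and every Poisson prime $P \in pspec_J(R)$, the ideal $P/J$ has a Poisson-normal generating set in $R/J$.

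The direct route is to enumerate the Poisson $\HH$-primes of $\OO(GL_2) = \OO(M_2)[D^{-1}]$: these correspond to the Poisson $\HH$-primes of $\OO(M_2)$ not containing $D = ad-bc$, a short finite list read off directly from \eqref{eq:2x2 Poisson relations} and the action \eqref{eq:H action on M and GL}. In each case $R/J$ is manifestly a UFD (it is an iterated extension of a localised polynomial ring, in analogy with Proposition~\ref{res:SL3 ufd}), so any height-1 Poisson prime of $R/J$ is principal, and its generator is automatically Poisson-normal because the ideal itself is Poisson. The remaining primes of $R/J$ have height~2 and are by Theorem~\ref{res:poisson stratification}(3) exactly the Poisson-primitive ideals of their stratum; for these one reuses the key step of Theorem~\ref{res:Poisson normal generators for SL3}, namely identify $\PZ(R_J)$ as a Laurent polynomial ring on generators $U_iV_i^{-1}$ with Poisson-normal denominators $V_i$, and then conclude from \eqref{eq:thing with Poisson centrality} that each primitive generator $U_i - \lambda V_i$ is itself Poisson-normal.

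A more elegant alternative, and the one I would pursue, is to realise $\OO(GL_2)$ as the Poisson $\HH$-quotient $\OO(SL_3)/I_{(132,132)}$, where $I_{(132,132)} = \langle Y_{12},Y_{13},Y_{21},Y_{31}\rangle$: in this quotient the bottom-right $2\times 2$ block of the $SL_3$ matrix plays the role of the $GL_2$ matrix, while the condition $D_{SL_3}=1$ makes $Y_{11}$ the inverse of the $2\times 2$ determinant of that block. Under this identification the Poisson $\HH$-primes of $\OO(GL_2)$ correspond bijectively to the Poisson $\HH$-primes of $\OO(SL_3)$ that contain $I_{(132,132)}$, and the Poisson-normal generating sets produced by Theorem~\ref{res:Poisson normal generators for SL3} descend directly to generating sets for $P/J$ in $\OO(GL_2)/J$.

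The main piece of bookkeeping that warrants care is verifying that the Poisson bracket and the induced torus action on $\OO(SL_3)/I_{(132,132)}$ match those of $\OO(GL_2)$ with respect to its native $(k^{\times})^{4}$-torus; this is straightforward from the fact that the bracket on $\OO(M_{m,p})$ is built out of $2\times 2$ rectangles (so the bracket on the bottom-right block in $\OO(SL_3)$ is literally the $\OO(M_2)$ bracket, and $Y_{11}$ is forced to be Poisson-central in the quotient because the rectangles $(Y_{11},Y_{1j},Y_{i1},Y_{ij})$ collapse). Once that compatibility is settled, Proposition~\ref{res:main equality of maps result} and Proposition~\ref{res:conjecture satisfied if normal generation of primes} close the argument exactly as in Corollary~\ref{res:Poisson conjecture holds for SL3}.
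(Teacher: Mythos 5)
Your second (preferred) route is exactly the paper's proof: the paper identifies $\OO(GL_2)$ with $\OO(SL_3)/I_{132,132}$ via $X_{22},X_{23},X_{32},X_{33}\mapsto a,b,c,d$ and $X_{11}\mapsto\Delta^{-1}$, then invokes Theorem~\ref{res:Poisson normal generators for SL3} together with Propositions~\ref{res:main equality of maps result} and~\ref{res:conjecture satisfied if normal generation of primes}. Your extra remarks about the direct enumeration and the torus-compatibility bookkeeping are reasonable but not part of the paper's argument, which simply relies on the isomorphism being one of Poisson $\HH$-algebras.
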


\begin{proof}
Let $a,b,c,d$ denote the standard generators in $\OO(GL_2)$, and $\Delta$ the $2\times 2$ determinant.  There is an isomorphism of Poisson algebras $\OO(SL_3)/J_{132,132} \cong \OO(GL_2)$ which sends $X_{22},X_{23},X_{32},X_{33}$ to $a,b,c,d$ (in that order), and $X_{11}$ to $\Delta^{-1}$.  The result now follows from Theorem~\ref{res:Poisson normal generators for SL3} and Propositions~\ref{res:main equality of maps result}, \ref{res:conjecture satisfied if normal generation of primes}.
\end{proof}


\section{A homeomorphism between $pspec(\OO(SL_3))$ and $spec(\OO_q(SL_3))$}\label{sec:homeomorphism}

In Section~\ref{sec:SL3 poisson stuff} we have seen that we can understand the Zariski topology of $pspec(\OO(SL_3))$ in terms of the commutative algebras $\PZ(R_J)$, $\PZ(R_K)$ and $\PZ_{JK}$, and homomorphisms $f_{JK}$ and $g_{JK}$ between them.  As already noted, this setup is based on the corresponding results for quantum algebras in \cite{GBrown}, which similarly reduces the study of the non-commutative prime spectrum of various quantum algebras to questions about commutative algebras and maps between them.  In this section we will show that for $\OO(SL_3)$ and its uniparameter quantum analogue $\OO_q(SL_3)$ these commutative algebras and homomorphisms are the \textit{same} in each case, and hence that these two topological spaces are homeomorphic.

Throughout this section, fix $q \in k^{\times}$ to be not a root of unity.  We continue to assume that $k$ is algebraically closed of characteristic 0.

We first need to define the algebra of quantum matrices, which quantizes the Poisson structure given in \eqref{eq:2x2 Poisson relations}.  The $2\times 2$ quantum matrix algebra $\OO_q(M_2)$ is defined to be the quotient of the free algebra $k\langle a,b,c,d\rangle$ by the relations
\begin{equation}\label{eq:2x2 quantum minor relations}\begin{gathered}
ab=qba, \qquad ac=qca, \\
bd=qdb, \qquad cd=qdc,\\
bc=cb,\\
ad-da = (q-q^{-1})bc.
\end{gathered}\end{equation}
As in the Poisson case, this extends to a definition of the $m \times p$ quantum matrices by considering the free algebra on $mp$ generators $\{X_{ij}:1 \leq i \leq m, \ 1 \leq j \leq p\}$ and imposing the condition that each set of four variables $\{X_{ij}, X_{il}, X_{kj}, X_{kl}\}$ satisfies a copy of relations \eqref{eq:2x2 quantum minor relations}.  The \textit{quantum determinant} is given by the formula
\begin{equation}\label{eq:quantum det}D_q = \sum_{\pi \in S_m}(-q)^{l(\pi)}X_{1,\pi(1)}X_{2,\pi(2)}\dots X_{n,\pi(n)},\end{equation}
where $l(\pi)$ denotes the length of $\pi \in S_m$, and quantum minors are defined analagously to the Poisson case (but see Notation~\ref{notation for quantum/poisson simultaneously} below).  As we might expect, $D_q$ is a central element in $\OO_q(M_m)$, and we obtain the algebras 
\[\OO_q(SL_m) := \OO_q(M_m)/(D_q-1), \qquad \OO_q(GL_m) := \OO_q(M_m)[D_q^{-1}].\]
Finally, there are rational actions of tori $\HH$ on each of these algebras which are defined in the same way as the Poisson case, see \eqref{eq:H action on M and GL}, \eqref{eq:torus acting on SL}.  We also have a transposition map $\tau: X_{ij} \longrightarrow X_{ji}$ as before, which defines an automorphism on each of the algebras $\OO_q(M_m)$, $\OO_q(GL_m)$, $\OO_q(SL_m)$.

Generating sets for $\HH$-primes in quantum matrices have been studied extensively (e.g. \cite{MR3285618,GLL2,MR2679698}) under various restrictions on $k$ and $q$.  In particular, generating sets for all 230 $\HH$-primes in $\OO_q(M_3)$ are computed in \cite{GL2} for fields of arbitrary characteristic and $q$ not a root of unity.  The $\HH$-primes of $\OO_q(SL_3)$ correspond to exactly those $\HH$-primes of $\OO_q(M_3)$ which do not contain $D_q$; as noted in Section~\ref{sec:SL3 poisson stuff}, we may take the same generating sets for $\HH$-primes in $\OO_q(SL_3)$ and Poisson $\HH$-primes in $\OO(SL_3)$ (up to the obvious modification of replacing quantum minors with minors).  Recall that these generating sets are listed in Figure~\ref{fig:H_primes_gens}.

\begin{notation}\label{notation for quantum/poisson simultaneously}
Since we will often want to treat the quantum and Poisson cases simultaneously in what follows, we make the following conventions.  Generators of $A:=\OO_q(SL_3)$ are denoted by $X_{ij}$ and generators of $R:=\OO(SL_3)$ by $Y_{ij}$; we will use $W_{ij}$ to mean ``$X_{ij}$ in the quantum case, $Y_{ij}$ in the Poisson case''.  Minors and quantum minors will both be denoted by $[I|J]$ and referred to simply as ``minors'', but minors in $\OO_q(SL_3)$ will always be computed using \eqref{eq:quantum det}.  Any references to $\HH$-primes in $\OO(SL_3)$ implicitly mean \textit{Poisson} $\HH$-primes, and ``centre'' should be taken to mean the centre with respect to the non-commutative multiplication or Poisson bracket as appropriate.  Where a formula in the $W_{ij}$ involves terms in $q$, we take $q$ to be a non-zero scalar which is not a root of unity in the quantum case, and $q=1$ in the Poisson case.
\end{notation}

Two types of algebra appear in the definitions of the maps $f_{JK}$, $g_{JK}$: the commutative algebras $\ZZ_{JK}$ and $\PZ_{JK}$ defined in \cite[Definition~3.8]{GBrown} and \eqref{eq:PZ} respectively, and the centres of $\HH$-simple localizations $\ZZ(A_J)$ and $\PZ(R_J)$ appearing in the quantum (resp. Poisson) stratification theorems.  We deal with the latter algebras first.

\begin{definition}\label{def:preservation of notation map} Since we are using the same notation for quantum minors and minors but will sometimes need to emphasise that we are passing from a minor in the quantum world to the corresponding minor in the Poisson world, we make the following definition: let $\theta$ be the ``preservation of notation'' map 
\[\theta: [I|J] \mapsto [I|J],\]
which takes a quantum minor $[I|J]$ and maps it to the corresponding commutative minor $[I|J]$ with the same index sets.  In particular, $\theta(X_{ij}) = Y_{ij}$.
\end{definition}

\begin{remark}
Clearly $\theta$ has no hope of extending to a well-defined map $A \longrightarrow R$; it exists purely as a notational convenience in order to pass from certain generating sets on the quantum side to the corresponding constructions on the Poisson side.  No attempt should be made to expand out a quantum minor and apply $\theta$ as if it were a homomorphism.  We will sometimes need to apply $\theta$ to products and inverses of quantum minors, so we make the following conventions: if $z_1z_2\dots z_n$ is a product of quantum minors, we apply $\theta$ pointwise to the minors \textit{in that order}, i.e.
\[\theta(z_1z_2\dots z_n):= \theta(z_1)\theta(z_2)\dots\theta(z_n).\]
Similarly, if $z$ is an invertible quantum minor we define
\[\theta(z^{-1}) := \theta(z)^{-1}.\]
\end{remark}

\begin{proposition}\label{res:centres of h simple localizations are the same}
Let $J$ be one of the 36 $\HH$-primes appearing in Figure~\ref{fig:H_primes_gens}, viewed as a $\HH$-prime in $A$ or $R$ as appropriate.  Let $A_J$ (resp. $R_J$) denote the localization of $A/J$ (resp $R/J$) at the set of all its regular $\HH$-eigenvectors.  Then $\ZZ(A_J)$ and $\PZ(R_J)$ are both commutative Laurent polynomial rings, there is a presentation
\[\ZZ(A_J) = k[Z_1^{\pm1}, \dots, Z_d^{\pm1}],\]
where each $Z_i$ is a product of quantum minors (and possibly their inverses), and there is an isomorphism of commutative algebras determined by
\[\Theta_J: \ZZ(A_J) \longrightarrow \PZ(R_J): Z_i \mapsto \theta(Z_i), \quad 1 \leq i \leq d.\]
\end{proposition}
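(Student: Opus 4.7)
The plan is to proceed by matching the quantum and Poisson computations case-by-case over the 36 $\HH$-primes of Figure~\ref{fig:H_primes_gens}, mirroring Proposition~\ref{res:poisson centre gens for sl3} on the quantum side. First I would invoke the quantum version of the $\HH$-stratification theorem \cite[II.2]{GBbook}, which already guarantees that $\ZZ(A_J)$ is a commutative Laurent polynomial ring over $k$ in at most $\mathrm{rank}(\HH)$ indeterminates; combined with Proposition~\ref{res:poisson centre gens for sl3}, both $\ZZ(A_J)$ and $\PZ(R_J)$ are Laurent polynomial rings, so what remains is to find compatible presentations.

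For each $J = I_\omega$, let $U_1V_1^{-1}, \dots, U_dV_d^{-1}$ be the generators of $\PZ(R_J)$ read from Table~\ref{tab:gens of centre}, and define $Z_i \in A_J$ to be the element obtained by replacing every Poisson minor by the corresponding quantum minor, so that by construction $\theta(Z_i) = U_iV_i^{-1}$. The main technical claim is that $Z_1, \dots, Z_d$ generate $\ZZ(A_J)$ as a Laurent polynomial ring. I would establish this in two steps. Step (a): a case-by-case verification that each $Z_i$ commutes with every $X_{kl}$ modulo $J$, using the $q$-commutation relations among quantum minors assembled in \cite{GL2}. Step (b): a rank count showing that the $Z_i$ exhaust $\ZZ(A_J)$; here the rank of $\ZZ(A_J)$ equals the dimension of the sublattice of $X(\HH)$ consisting of characters $\chi$ for which the $\chi$-eigenspace of $A_J$ contains a central element, and this combinatorial datum depends only on the $\HH$-eigenvalues of the generators of $A/J$, which coincide with those of the Poisson generators of $R/J$.

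Once $Z_1,\dots,Z_d$ are known to generate $\ZZ(A_J)$, the assignment $Z_i \mapsto \theta(Z_i)$ is defined on generators. Algebraic independence of the $\theta(Z_i)$, which comes from Proposition~\ref{res:poisson centre gens for sl3}, forces algebraic independence of the $Z_i$, so the assignment extends uniquely to an isomorphism of commutative $k$-algebras $\Theta_J: \ZZ(A_J) \longrightarrow \PZ(R_J)$.

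The main obstacle is step (a), i.e.\ the centrality verification for each of the 36 cases. Fortunately most of them collapse immediately: the empty entries of Table~\ref{tab:gens of centre} require nothing, the rank-$1$ cases reduce to a single $q$-commutator check against each $X_{kl}$, and the transpose automorphism $\tau$ (which is compatible with $\theta$ in the obvious sense) halves the remaining work. The handful of genuinely new rank-$2$ verifications are the direct quantum analogues of the Poisson computations carried out in \cite[\S5.3]{MeThesis}, with $\{-,-\}$ replaced by $[-,-]_q$, and can be dispatched by the same eigenvalue bookkeeping used on the Poisson side.
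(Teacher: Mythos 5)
Your proposal takes a genuinely different route from the paper, and it contains a gap worth flagging.

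The paper does not re-derive the quantum centres at all: it cites the already-computed presentations of $\ZZ(\OO_q(GL_3)_J)$ from \cite[Figure 5]{GL1}, transfers them to $\OO_q(SL_3)$ via the Levasseur--Stafford isomorphism $\OO_q(GL_3) \cong \OO_q(SL_3)[z^{\pm1}]$ (which sends $D_q \mapsto z$ and so reduces the problem to finding a presentation in which $D_q$ is one of the Laurent generators), and then simply compares the resulting lists of generators against Table~\ref{tab:gens of centre}. Apart from a change of variables in three entries, the match is immediate. Your proposal instead reconstructs the quantum computation from scratch: define quantum lifts $Z_i$ of the Poisson generators, check centrality case-by-case, and then argue by a rank count that nothing is missing. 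This is workable in principle, but it essentially amounts to re-proving \cite[Figure 5]{GL1}, so it trades one citation for thirty-six commutator checks.

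The more serious issue is step (b). Two things are off. First, the claim that the rank of $\ZZ(A_J)$ ``depends only on the $\HH$-eigenvalues of the generators of $A/J$'' is not justified and is not quite the right invariant: the rank is determined by the $q$-commutation matrix of a quantum-torus presentation of $A_J$ (equivalently, the kernel of that additively skew-symmetric integer matrix), which is related to but not literally read off from the $\HH$-weights, and you would need to exhibit this matrix and verify it coincides with the Poisson bracket matrix of $R_J$ before concluding the ranks agree. Second, and independently, rank equality does not finish the argument. If $Z_1, \dots, Z_d$ are central, algebraically independent Laurent monomials in a rank-$d$ Laurent polynomial centre, they still need not generate the whole centre --- compare $k[x^{\pm 2}] \subsetneq k[x^{\pm1}]$. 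You must show that the lattice of exponent vectors spanned by the $Z_i$ is \emph{saturated} inside the lattice cutting out $\ZZ(A_J)$, i.e.\ is the full kernel of the commutation matrix, not merely a finite-index sublattice. The paper sidesteps both problems by quoting an explicit presentation of $\ZZ(A_J)$ rather than only bounding its rank. Your proposal, as written, does not close this gap.
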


\begin{proof}
Generators for the centres $\PZ(R_J)$ are given in Table~\ref{tab:gens of centre}; on the quantum side, the centres are computed for the corresponding algebras $\OO_q(GL_3)_J$ (i.e. $\HH$-simple localizations of the algebras $\OO_q(GL_3)/J$) in \cite[Figure~5]{GL1}.  These can be translated into results for $\OO_q(SL_3)$ via the Levasseur-Stafford isomorphism $\OO_q(GL_3) \cong \OO_q(SL_3)[z^{\pm1}]$ \cite{LS1}; since this isomorphism maps $D_q$ to $z$, and $D_q$ does not belong to any $\HH$-prime of $\OO_q(GL_3)$, it is easily verified that
\[\ZZ\big(\OO_q(GL_3)_J\big) \cong \ZZ\big(A_J[z^{\pm1}]\big) = \ZZ(A_J)[z^{\pm1}],\]
where by a slight abuse of notation we use $J$ to denote both a $\HH$-prime in $\OO_q(GL_3)$ and its projection to $\OO_q(SL_3)$.

In other words, as long as we can find a presentation of $\ZZ(\OO_q(GL_3)_J)$ in which $D_q$ appears as one of the generators, we can then obtain a presentation for $\ZZ(A_J)$ simply by deleting $D_q$ from the list of generators. In all but three cases, namely 
\[\begin{array}{ccc}
\hprime{
 \circ & \bullet & \bullet \\
 \bullet & \circ & \circ \\
 \bullet & \circ & \circ 
}
&
\hprime{
 \circ & \circ & \bullet \\
 \circ & \circ & \bullet \\
 \bullet & \bullet & \circ 
}
&
\hprime{
 \circ & \bullet & \bullet \\
 \bullet & \circ & \bullet \\
 \bullet & \bullet & \circ 
}
\\
(132,132) & (213,213) & (123,123),
\end{array}\]
the result now follows immediately by comparing Table~\ref{tab:gens of centre} with \cite[Figure~5]{GL1}.  (Note that whenever there is a product of quantum minors in \cite[Figure~5]{GL1}, we will always use the order specified in that table.) 

The remaining three cases can be handled by a simple change of variables; for example, when $J = I_{132,132}$ we have $D_q = X_{11}[23|23]$ in $\OO_q(GL_3)/J$.  The generators in position $(132,132)$ of \cite[Figure~5]{GL1} are $\{X_{11},[23|23],X_{23}X_{32}^{-1}\}$ and clearly we can replace the generator $[23|23]$ with $X_{11}[23|23] = D_q$ without changing the algebra; the resulting presentation now matches the one in position (132,132) of Table~\ref{tab:gens of centre}.  The other two cases follow similarly.
\end{proof}

Our main task is to describe the intermediate algebras $\PZ_{JK}$ and $\ZZ_{JK}$.  Let $B$ denote either $\OO(SL_3)$ or $\OO_q(SL_3)$, and let $J \subset K$ be a pair of $\HH$-primes in $B$.  Recall from Lemma~\ref{res:shrinking E_JK set} and \cite[Lemma~3.9]{GBrown} that $\PZ_{JK}$ and $\ZZ_{JK}$ are the (Poisson) centres of localizations of $B/J$ at a multiplicative set $E_{JK}$ (which should also be an Ore set in the non-commutative setting), satisfying the following conditions:
\vspace{0.5em}
\begin{itemize}[leftmargin=1.5cm]
\item[\textbf{(M1)}] $E_{JK}$ consists of regular $\HH$-eigenvectors in $B/J$ which are regular modulo $K/J$;
\item[\textbf{(M2)}] If $L \supseteq J$ is any other $\HH$-prime in $B$ such that $L \not\subseteq K$, then $(L/J) \cap E_{JK} \neq \emptyset.$
\end{itemize}
\vspace{0.5em}

We start by constructing sets $E_{K}$ for each $K$; these will correspond to the case $J = 0$, and the general sets $E_{JK}$ will follow easily as projections of these.

\begin{figure}[h!]
\begin{tabular}{c|cccccc}
$\omega$ & 321 & 231 & 312 & 132 & 213 & 123 \\ \hline
$E_{\omega_{+}}$ & \makecell{$[23|12]$ \\ $W_{31}$ \\ $W_{32}$} & \makecell{$[23|12]$ \\ $W_{32}$} & \makecell{$W_{31}$ \\ $W_{32}$} & $W_{32}$ & $[23|13]$ & $\emptyset$ \\ \hline
$E_{\omega_{-}}$ & \makecell{$[12|23]$ \\ $W_{13}$ \\ $W_{23}$} & \makecell{$W_{13}$ \\ $W_{23}$} & \makecell{$[12|23]$ \\ $W_{23}$} & $W_{23}$ & $[13|23]$ & $\emptyset$
\end{tabular}
\caption{Generators for multiplicative (Ore) sets $E_{K}$.}\label{tab:gens of EEJK}
\end{figure}

\begin{definition}\label{def:E_K with a view to E_JK}For a $\HH$-prime $K$ appearing in position $(\omega_{+},\omega_{-})$ of Figure~\ref{fig:H_primes_gens}, define $E_{K}$ to be the multiplicative set generated in $B$ by
\begin{equation}\label{eq:first def of E_K}E_{\omega_{+}} \cup E_{\omega_{-}} \cup \{[23|23],W_{33}\},\end{equation}
where $E_{\omega_{+}}$ and $E_{\omega_{-}}$ are defined in Figure~\ref{tab:gens of EEJK}.\end{definition}

\begin{remark}\label{rem:Ore sets}
In the quantum setting, we need to know that a multiplicative set satisfies the Ore conditions (e.g. \cite[Chapter 6]{GW1}) before attempting to localize at it.  For all of the localizations we will consider in this paper (and in particular for the sets $E_K$ in Definition~\ref{def:E_K with a view to E_JK}) we get this condition for free thanks to the descriptively named paper ``Every quantum minor generates an Ore set'', \cite{MinorsOreSets}.
\end{remark}

\begin{remark}\label{rem:H primes are difficult}
The commutation relations in $\OO_q(SL_3)$ (resp. the Poisson bracket in $\OO(SL_3)$) mean that we cannot always easily identify which minors belong to a given ideal.  Solving this problem has been the subject of considerable recent work, e.g. \cite{MR2774624,CauchonDD,GLL1}. The choice of generating sets in Figure~\ref{tab:gens of EEJK} in particular is informed by the general theory of Cauchon diagrams and techniques from the study of totally nonnegative matrices, which we do not discuss here.  The interested reader is referred to the survey \cite{LL_TNNsurvey} for more detail on this.

We also highlight the following key results, which hold when $k$ is a field of characterstic zero and $q \in k^{\times}$ is not a root of unity.

\begin{enumerate}
\item A quantum minor $[I|J]$ belongs to a $\HH$-prime $I_{\omega}$ in $\OO_q(SL_3)$ if and only if the corresponding minor $[I|J]$ belongs to the Poisson $\HH$-prime $I_{\omega}$ in $\OO(SL_3)$ \cite[Theorem 4.2]{GLL2}.
\item All prime ideals in $\OO_q(SL_3)$ are completely prime (i.e. prime in the commutative sense) \cite[Corollary II.6.10]{GBbook}.
\end{enumerate}
\end{remark}

\begin{lemma}\label{res:EK and K are disjoint}
For each $\HH$-prime $K$ in $B$ and its corresponding multiplicative set $E_K$, we have $K \cap E_K = \emptyset$.
\end{lemma}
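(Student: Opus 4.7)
The plan is to reduce the statement to a finite check---that each explicit generator of $E_K$ lies outside $K$---and then to carry out that check using Figure~\ref{fig:H_primes_gens} together with the quantum/Poisson correspondence from Remark~\ref{rem:H primes are difficult}.

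First I would use the fact that $K$ is completely prime (automatic in the Poisson case, and stated in Remark~\ref{rem:H primes are difficult}(2) in the quantum case) to observe that $B \setminus K$ is closed under multiplication. Hence $E_K \cap K = \emptyset$ is equivalent to the \emph{a priori} weaker claim that no generator of $E_K$ belongs to $K$. By Definition~\ref{def:E_K with a view to E_JK}, these generators form the finite collection $E_{\omega_+} \cup E_{\omega_-} \cup \{[23|23], W_{33}\}$ listed in Figure~\ref{tab:gens of EEJK}, each of which is a quantum (or commutative) minor of size one or two.

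Next, by Remark~\ref{rem:H primes are difficult}(1), for any index sets $I, J$ the quantum minor $[I|J]$ lies in the quantum $\HH$-prime $I_\omega$ of $\OO_q(SL_3)$ if and only if the corresponding commutative minor lies in the Poisson $\HH$-prime $I_\omega$ of $\OO(SL_3)$. This reduces the verification to the Poisson case, in which $R/K$ is a commutative integral domain (indeed a UFD, by Proposition~\ref{res:SL3 ufd}), and in which the image of each generator of $E_K$ can be tested directly.

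The final step is inspection against Figure~\ref{fig:H_primes_gens}. The elements $W_{33}$ and $[23|23]$ are handled uniformly for all 36 $\HH$-primes: the $(3,3)$ entry is a circle in every diagram, so $Y_{33}$ is never among the prescribed generators, and the determinant relation $D=1$ then prevents $Y_{33}$ from vanishing in any quotient $R/I_\omega$; a parallel argument, using that the leading summand $Y_{22}Y_{33}$ survives each quotient, applies to $[23|23]$. Each element of $E_{\omega_+}$ (resp.\ $E_{\omega_-}$) only has to be tested against the six $\HH$-primes in row $\omega_+$ (resp.\ column $\omega_-$) of Figure~\ref{fig:H_primes_gens}, and the specific minors chosen in Figure~\ref{tab:gens of EEJK} were selected precisely so that this test is passed. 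The main obstacle is the essentially case-by-case nature of this final inspection; a more conceptual argument via Cauchon-diagram technology (see the references in Remark~\ref{rem:H primes are difficult}) is possible, but for $\OO(SL_3)$ direct verification is both feasible and more transparent.
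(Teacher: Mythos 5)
Your broad strategy --- reduce via complete primality to the finitely many generators of $E_K$, transfer to the Poisson side via Remark~\ref{rem:H primes are difficult}(1), and then verify each generator directly against Figure~\ref{fig:H_primes_gens} --- is the same in spirit as the paper's first suggested proof (direct verification, e.g.\ by computer).  The two preliminary reductions you add are correct and sensible.  However, the ``uniform'' shortcut you propose for the ubiquitous elements $W_{33}$ and $[23|23]$ has a genuine gap, and those two elements cannot in fact be dispatched any more cheaply than the others.

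For $W_{33}$: that $Y_{33}$ is never among the \emph{listed} generators of $K$ does not tell you $Y_{33}\notin K$, since an ideal can contain elements not in its prescribed generating set.  And the relation $D=1$ does not by itself prevent $Y_{33}$ from vanishing: by Laplace expansion along the third column, $D = Y_{13}[23|12]-Y_{23}[13|12]+Y_{33}[12|12]$, so assuming $Y_{33}\in K$ only yields $Y_{13}[23|12]-Y_{23}[13|12]\equiv 1 \pmod K$, which is not an immediate contradiction.  To make this rigorous one would have to bring in further structure, e.g.\ that $K$ is a Poisson ideal, so $Y_{33}\in K$ forces $\{Y_{33},Y_{11}\}=-2Y_{13}Y_{31}\in K$ and hence (by primality) $Y_{13}\in K$ or $Y_{31}\in K$, and then chase consequences; this is no longer a one-line argument.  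For $[23|23]$ the gap is even clearer: from $Y_{22}Y_{33}\notin K$ one cannot conclude $[23|23]=Y_{22}Y_{33}-Y_{23}Y_{32}\notin K$, since a difference of two elements outside a prime ideal can perfectly well lie inside it (compare $\OO(M_2)/([12|12])$, where $a,b,c,d$ are all nonzero yet $ad-bc=0$).  So these two generators require the same case-by-case inspection (or, more elegantly, the Cauchon-diagram/lacunary-sequence argument via \cite[Prop.~4.2]{LaunoisLenagan1} and \cite[Thm.~4.2]{GLL2} that the paper offers as an alternative) as everything else in $E_K$.
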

\begin{proof}
This can be verified directly (e.g. by computer) using the results of \cite[Definition 2.6]{GLL1} to contruct a complete list of the minors belonging to a given $\HH$-prime.

For those familiar with Cauchon diagrams and their relationship to $\HH$-primes, there is also a more elegant proof: given a $\HH$-prime $K$ and its Cauchon diagram $C_K$, observe that the generating minors of $E_K$ are chosen to be exactly those minors defined by lacunary sequences (see \cite[Definition~3.1]{LaunoisLenagan1}) starting at the white squares of $C_K$.  The result now follows by combining \cite[Theorem 4.2]{GLL2} and \cite[Proposition 4.2]{LaunoisLenagan1}.  
\end{proof}

We can now verify that the sets $E_K$ have the property (M2) in the case where $J= 0$.

\begin{lemma}\label{res:EE_K kills appropriate primes L}
Let $L$ be a $\HH$-prime in $B$, where $B$ is either $\OO(SL_3)$ or $\OO_q(SL_3)$.  Then for any $\HH$-prime $K$ with $L \not\subseteq K$, we have
\[L \cap E_K \neq \emptyset.\]
\end{lemma}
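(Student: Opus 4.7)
The plan is to reduce the statement to a finite combinatorial check on pairs of $\HH$-primes. Since every prime appearing is completely prime (by Remark~\ref{rem:H primes are difficult}(2) in the quantum setting, automatically in the Poisson setting) and $E_K$ is multiplicatively generated by the elements listed in Definition~\ref{def:E_K with a view to E_JK}, we have $L \cap E_K \neq \emptyset$ if and only if $L$ contains at least one of these generators. Combined with Lemma~\ref{res:EK and K are disjoint}, the assertion becomes the following: the $\HH$-primes of $B$ which are disjoint from $E_K$ form precisely the order ideal $\{L : L \subseteq K\}$.

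Two observations cut the casework substantially. First, the universal generators $[23|23]$ and $W_{33}$ belong to $E_K$ for every $K$, so the result holds immediately for any $K$ whenever one of them lies in $L$; we may therefore assume neither does. Second, the transpose automorphism $\tau$ interchanges the $\omega_{+}$ and $\omega_{-}$ components of an $\HH$-prime, sends $E_{I_{\omega}}$ to $E_{I_{\tau(\omega)}}$ and preserves containments, so it suffices to verify the claim on a set of representatives modulo the $\tau$-action.

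The remaining analysis is then a direct case check on the surviving pairs $(L,K)$. For each generator of $E_K$ (an indeterminate $W_{ij}$ or a $2\times 2$ minor $[I|J]$), membership in a given $\HH$-prime of $\OO_q(SL_3)$ is described explicitly by \cite[Definition~2.6]{GLL1}, and by \cite[Theorem~4.2]{GLL2} the same information transfers verbatim to $\OO(SL_3)$; this yields a complete list of minors lying in each $\HH$-prime that can be produced by direct computation (and checked by computer if desired). More conceptually, the generators of $E_K$ listed in Figure~\ref{tab:gens of EEJK} correspond to the lacunary sequences rooted at the white squares of the Cauchon diagram $C_K$, as already used in the proof of Lemma~\ref{res:EK and K are disjoint}, and the lacunary-sequence framework of \cite[Proposition~4.2]{LaunoisLenagan1} explains why these specific minors are numerous enough to witness membership in every $L$ not contained in $K$.

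The principal obstacle is purely one of bookkeeping: there are $36^2$ pairs in principle, reduced substantially by the two symmetries above, by the universal generators $[23|23]$ and $W_{33}$, and by the immediate pairs with $L \subseteq K$. A fully case-free proof would demand a sharper combinatorial comparison between the Cauchon diagrams $C_L$ and $C_K$, but since Figure~\ref{fig:H_primes_gens} already provides an explicit description of all 36 $\HH$-primes, the direct verification is the most economical route.
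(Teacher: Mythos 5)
Your reduction is sound: since every $\HH$-prime $L$ is completely prime (in both settings) and $E_K$ is multiplicatively generated by the listed elements, $L\cap E_K\neq\emptyset$ holds precisely when $L$ contains one of those listed generators, so the lemma becomes a finite combinatorial check.  However, your ``first observation'' is vacuous rather than casework-reducing: Lemma~\ref{res:EK and K are disjoint} already shows $K\cap E_K=\emptyset$ for \emph{every} $K$, and since $[23|23]$ and $W_{33}$ lie in every $E_K$, no $\HH$-prime contains either of them.  So the hypothesis ``one of them lies in $L$'' never occurs and you may not rely on it to eliminate pairs.

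The paper organizes the verification quite differently, and more economically.  It observes that $L\not\subseteq K$ forces some \emph{listed generator} $u$ of $L$ (one of $W_{12},W_{13},W_{21},W_{23},W_{31},W_{32},[23|12],[12|23]$) to lie outside $K$, and then runs a case analysis on $u$ rather than on the pair $(L,K)$.  For $u\in\{W_{13},W_{23},W_{31},W_{32}\}$ a direct inspection of Figures~\ref{fig:H_primes_gens} and~\ref{tab:gens of EEJK} shows $u\in E_K$ whenever $u\notin K$, so $u$ itself witnesses the intersection.  For $u=[23|12]$ (resp.\ $[12|23]$), $u\notin K$ forces $\omega_+\in\{321,231\}$ (resp.\ $\omega_-\in\{321,312\}$), and in those rows/columns $u$ is a generator of $E_K$.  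The genuinely nontrivial case is $u=W_{21}$ (and symmetrically $W_{12}$): here the paper uses the commutation/bracket relation $W_{21}W_{32}-W_{32}W_{21}=(q-q^{-1})W_{22}W_{31}$, complete primality, and $W_{22}\notin L$ to deduce $W_{31},[23|13],[23|12]\in L$ as well, and then checks that whichever row $K$ sits in (with $\omega_+\notin\{132,123\}$), at least one of those elements is a generator of $E_K$.  Your brute-force pair enumeration would absorb this step silently, but the paper's organization makes the key structural observation about Figure~\ref{tab:gens of EEJK} explicit and avoids having to touch all $36\times 36$ pairs.

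As written, your proposal is a correct strategy but not a complete proof: the actual case verification (whether pair-by-pair as you suggest, or generator-by-generator as in the paper) is deferred to ``direct computation,'' and the appeal to lacunary sequences of \cite{LaunoisLenagan1} is used in the paper only to prove $E_K\cap K=\emptyset$ (Lemma~\ref{res:EK and K are disjoint}), not to prove $L\cap E_K\neq\emptyset$ for $L\not\subseteq K$ --- that would require a genuine comparison between the Cauchon diagrams $C_L$ and $C_K$, which you correctly identify as missing but then lean on anyway.  If you want a write-up at the level of detail the paper supplies, adopt the generator-of-$L$ organization; it collapses the work to five short cases.
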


\begin{proof}
Fix a $\HH$-prime $L \neq 0$.  For any $K$ with $L \not\subseteq K$ there must be at least one element $u \in L \backslash K$; in fact, we can take $u$ to be a generator of $L$, i.e. $u \in \{W_{12}, W_{13},W_{21},W_{23},W_{31},W_{32},[23|12],[12|23]\}$.  We will consider each of these elements in turn and show that if $u \in L$ and $K$ is a $\HH$-prime with $u \not\in K$, then $L \cap E_K \neq \emptyset$.  

First suppose that $u \in\{W_{13},W_{23},W_{31},W_{32}\}$, and let $K$ be a $\HH$-prime with $u \not\in K$.  Comparing Figure~\ref{fig:H_primes_gens} and Figure~\ref{tab:gens of EEJK}, we see that whenever $u$ doesn't appear as a \textit{generator} of $K$, it always appears as a generator of the corresponding set $E_K$; applying Lemma~\ref{res:EK and K are disjoint}, it follows that $u \in E_K$ if and only if $u \not\in K$.  Since $u \in L\backslash K$ by assumption, we have $u \in L \cap E_K \neq \emptyset$ as required.

Now suppose that $u = [23|12] \in L$.  If $K$ is a $\HH$-prime such that $[23|12] \not\in K$, then it must lie somewhere in rows 321 or 231 of Figure~\ref{fig:H_primes_gens} (since these are the only two rows whose $\HH$-primes do not feature $[23|12]$ as either a generator or an obvious corollary of the generators).  But $[23|12]$ appears as a generator for $E_K$ whenever $\omega_+ = 321$ or $231$, and so $u \in L \cap E_K$ as required.  A similar argument handles $u = [12|23]$ (with $\omega_- = 321$ or $312$).

Finally, suppose that $u = W_{21} \in L$.  Using the relation $X_{21}X_{32} - X_{32}X_{21} = (q-q^{-1})X_{22}X_{31}$ in the quantum case and $\{Y_{21},Y_{32}\} = 2Y_{22}Y_{31}$ in the Poisson case, we can conclude that $W_{31} \in L$ as well: this follows from the complete primality of $K$ and the fact that $W_{22} \in K$ would imply $1 \in K$ (e.g. \cite[Lemma 5.1.5]{MeThesis}).  It follows that
\begin{equation}\label{eq:things in L}W_{21}, W_{31}, [23|13], [23|12] \in L.\end{equation}
Now if $K$ is a $\HH$-prime with $W_{21} \not\in K$, then $K$ does not lie in rows 132 or 123 of Figure~\ref{fig:H_primes_gens}.  From Figure~\ref{tab:gens of EEJK}, $E_{K}$ must therefore contain at least one of $W_{31}$, $[23|12]$ and $[23|13]$; it now follows from \eqref{eq:things in L} that $L \cap E_K \neq \emptyset$ by \eqref{eq:things in L}.  A symmetric argument handles the case $u = W_{12}$.
\end{proof}

Since $K \cap E_K = \emptyset$ by Lemma~\ref{res:EK and K are disjoint}, it makes sense to consider the image of the sets $E_K$ modulo another $\HH$-prime $J \subset K$:

\begin{definition}\label{def:E_JK}
Let $J \subset K$ be $\HH$-primes, and let $E_K$ be as in Definition~\ref{def:E_K with a view to E_JK}.  Define $E_{JK}$ to be the image of $E_K$ in $B/J$, where $B$ is either $\OO_q(SL_3)$ or $\OO(SL_3)$ as appropriate.  
\end{definition}

We are now in a position to describe the algebras $\ZZ_{JK}$ and $\PZ_{JK}$ in the setting of $SL_3$. 

\begin{lemma}\label{res:finally EEJK is correct}
Let $J \subset K$ be $\HH$-primes in $B$, where $B$ is either $\OO_q(SL_3)$ or $\OO(SL_3)$.  If $L \in \HH$-$spec(B)$ satisfies $J \subseteq L \not\subseteq K$, then in $B/J$ we have
\[E_{JK} \cap L/J \neq \emptyset.\]
\end{lemma}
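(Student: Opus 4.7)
The plan is to deduce this lemma immediately from the two preceding lemmas (Lemma~\ref{res:EK and K are disjoint} and Lemma~\ref{res:EE_K kills appropriate primes L}), which have already done all of the real work in the case $J = 0$. The only step needed here is to verify that passing from $E_K \subset B$ down to its image $E_{JK} \subset B/J$ does not accidentally collapse the required witness to zero.

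First I would apply Lemma~\ref{res:EE_K kills appropriate primes L} to the hypothesis $L \not\subseteq K$: this produces some element $u \in L \cap E_K$. By construction $E_{JK}$ is the image of $E_K$ in $B/J$, so the natural projection $\overline{u} := u + J$ lies in $E_{JK}$, and because $u \in L$ and $J \subseteq L$ we automatically have $\overline{u} \in L/J$.

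The only subtlety is that one should check $\overline{u}$ is genuinely nonzero in $B/J$, i.e., $u \notin J$. For this I would use $J \subseteq K$, which gives the chain of inclusions
\[ E_K \cap J \;\subseteq\; E_K \cap K \;=\; \emptyset, \]
where the last equality is Lemma~\ref{res:EK and K are disjoint}. Hence $u \notin J$, and $\overline{u}$ is a nonzero element of $E_{JK} \cap (L/J)$, as required.

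Since all the combinatorial case analysis has been absorbed into Lemma~\ref{res:EE_K kills appropriate primes L} and the disjointness $K \cap E_K = \emptyset$ into Lemma~\ref{res:EK and K are disjoint}, the only ``main obstacle'' is recognising that the definition of $E_{JK}$ as the image of $E_K$ makes the reduction from the general case $J \subseteq L \not\subseteq K$ to the case $J = 0$ entirely formal, with the inclusion $J \subseteq K$ ensuring no element of $E_K$ is killed in the quotient.
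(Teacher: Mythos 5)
Your proposal is correct and follows essentially the same approach as the paper: invoke Lemma~\ref{res:EE_K kills appropriate primes L} to obtain $u \in L \cap E_K$, then use $J \subseteq K$ together with the disjointness $E_K \cap K = \emptyset$ from Lemma~\ref{res:EK and K are disjoint} to conclude that $u$ survives in $B/J$. The only cosmetic difference is that you phrase the final step via the chain $E_K \cap J \subseteq E_K \cap K = \emptyset$, whereas the paper says more tersely that $v \notin K$ and $J \subset K$ imply $v \neq 0$ in $B/J$; the content is identical.
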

\begin{proof}
By Lemma~\ref{res:EE_K kills appropriate primes L}, we have $E_{0K} \cap L \neq \emptyset$ in $B$, i.e. there exists some non-zero $v \in L \cap E_{0K} = L \cap E_{K}$.  This element $v$ continues to be non-zero in $B/J$, since $v \not\in K$ and $J \subset K$.  We therefore have $v + J \in L/J \cap E_{JK} \neq \emptyset$, as required.
\end{proof}

\begin{corollary}\label{res:EE_JK is the correct set}
Let $J \subset K$ be $\HH$-primes in $\OO_q(SL_3)$ or $\OO(SL_3)$, let $\ZZ_{JK}$ be defined as in \cite[Definition~3.8]{GBrown} and $\PZ_{JK}$ defined as in \eqref{eq:PZ}.  Then for the set $E_{JK}$ in Definition~\ref{def:E_JK}, we have:
\[\ZZ_{JK} = \ZZ\Big((\OO_q(SL_3)/J)[E_{JK}^{-1}]\Big) \quad \textrm{ and } \quad \PZ_{JK} = \PZ\Big((\OO(SL_3)/J)[E_{JK}^{-1}]\Big).\]
\end{corollary}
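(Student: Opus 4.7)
The plan is to deduce this corollary directly from Lemma~\ref{res:shrinking E_JK set} (in the Poisson case) and its quantum analogue \cite[Lemma~3.9]{GBrown} by verifying that the multiplicative set $E_{JK}$ from Definition~\ref{def:E_JK} meets the hypotheses of those results. Concretely, three conditions must be checked: (i) $E_{JK}$ is a multiplicative set of homogeneous elements in $B/J$ which are regular in $B/J$ and regular modulo $K/J$; (ii) in the quantum case, $E_{JK}$ additionally satisfies the Ore conditions; and (iii) for every $\HH$-prime $L$ of $B$ with $J \subseteq L$ and $L \not\subseteq K$, the intersection $E_{JK} \cap (L/J)$ is nonempty.

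For (i), the generators of $E_K$ listed in Definition~\ref{def:E_K with a view to E_JK} are all quantum minors (respectively their commutative counterparts), hence $\HH$-eigenvectors in $B$; reducing modulo $J$ preserves $\HH$-homogeneity, and each such image is nonzero in $B/J$ because $E_K \cap K = \emptyset$ by Lemma~\ref{res:EK and K are disjoint} and $J \subset K$. That same lemma gives $E_K \cap K = \emptyset$, which transfers to $E_{JK} \cap (K/J) = \emptyset$ and so supplies regularity modulo $K/J$; combined with $B/J$ being a domain (Remark~\ref{rem:H primes are difficult}(2) in the quantum setting, and automatic in the Poisson setting), this also gives regularity in $B/J$. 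For (ii), the Ore property in $\OO_q(SL_3)/J$ for a multiplicative set generated by quantum minors is guaranteed by the main result of \cite{MinorsOreSets}, as recorded in Remark~\ref{rem:Ore sets}. For (iii), this is exactly the content of Lemma~\ref{res:finally EEJK is correct}.

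With (i)--(iii) in hand, Lemma~\ref{res:shrinking E_JK set} immediately yields the equality
\[\PZ_{JK} = \PZ\bigl((\OO(SL_3)/J)[E_{JK}^{-1}]\bigr),\]
and the analogous application of \cite[Lemma~3.9]{GBrown} in $\OO_q(SL_3)$ gives
\[\ZZ_{JK} = \ZZ\bigl((\OO_q(SL_3)/J)[E_{JK}^{-1}]\bigr).\]

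The corollary is really a bookkeeping step: the substantive content has already been extracted in Lemmas~\ref{res:EK and K are disjoint} and \ref{res:finally EEJK is correct}, where the explicit generating sets of Figure~\ref{tab:gens of EEJK} were designed to be simultaneously small enough to miss $K$ and large enough to meet every competing $\HH$-prime. The main obstacle, if any, is purely notational — making sure that the quantum and Poisson statements are proved in parallel under the conventions of Notation~\ref{notation for quantum/poisson simultaneously} — rather than mathematical.
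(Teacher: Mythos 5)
Your proposal is correct and takes essentially the same approach as the paper: deduce the corollary by checking the hypotheses of Lemma~\ref{res:shrinking E_JK set} (Poisson side) and \cite[Lemma~3.9]{GBrown} (quantum side), with Lemma~\ref{res:finally EEJK is correct} supplying the key intersection property and \cite{MinorsOreSets} handling the Ore condition. The paper states this more tersely, but the content is the same; your explicit verification of homogeneity and regularity via Lemma~\ref{res:EK and K are disjoint} and complete primality is sound and matches what the paper leaves implicit.
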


\begin{proof}
In the Poisson case, this follows directly from Lemma \ref{res:shrinking E_JK set} and Lemma \ref{res:finally EEJK is correct}.  For the quantum case, combine \cite[Lemma~3.9]{GBrown}, Lemma~\ref{res:finally EEJK is correct} and \cite{MinorsOreSets}.
\end{proof}

A standard tactic with algebras of this type is to invert certain elements and then make strategic changes of variables in order to simplify the commutation (resp. Poisson bracket) relations.  In particular, our aim will be to reduce as many of the relations as possible to skew-commutation relations (or their Poisson equivalents).

\begin{definition}\label{def:mixed affine spaces}
Let $\mathbf{q} = (a_{ij})$ be an additively skew-symmetric $n \times n$ matrix for some integer $n \geq 2$, and $r$ an integer such that $0 \leq r \leq n$.  Define the skew-commuting $k$-algebra associated to $\mathbf{q}$, $r$ and $n$ by
\[A_{\mathbf{q}}^{r,n}:=k_{\mathbf{q}}[x_1^{\pm1}, \dots, x_r^{\pm1}, x_{r+1}, \dots, x_n] = k\langle x_1^{\pm1}, \dots, x_r^{\pm1}, x_{r+1}, \dots, x_n\rangle/ (x_ix_j = q^{a_{ij}}x_jx_i).\]
When $r=0$, this is a uniparameter quantum affine space on $n$ variables.  We define the corresponding Poisson algebra by
\[P_{\mathbf{q}}^{r,n}:=k[y_1^{\pm1}, \dots, y_r^{\pm1}, y_{r+1},\dots, y_n] \textrm{ with Poisson bracket } \{y_i,y_j\}_{\mathbf{q}} = a_{ij}y_iy_j.\]
In some cases it will be convenient to list the generators of these algebras in a different order (e.g. to list the uninverted variables before the inverted ones); for notational simplicity, we will treat reorderings of the same algebra as equal.  

Given a pair of algebras $A_{\mathbf{q}}^{r,n}$ and $P^{r,n}_{\mathbf{q}'}$, if there is a choice of reordering of the variables in each algebra such that $\mathbf{q} = \mathbf{q}'$, then we will say that the two algebras are \textit{compatible}.
\end{definition}

The centres of these algebras are particularly easy to compute, as we now describe.
\begin{lemma}\label{res:extension of Oh centres result}
Let $\mathbf{q}$ be an additively skew-symmetric $n \times n$ matrix, and $A_{\mathbf{q}}^{r,n}$, $P_{\mathbf{q}}^{r,n}$ the corresponding quantum and Poisson algebras for some $0 \leq r \leq n$ (see Definition~\ref{def:mixed affine spaces}).  Then $\ZZ(A_{\mathbf{q}}^{r,n})$, $\PZ(P_{\mathbf{q}}^{r,n})$ are each generated by their (Poisson) central monomials, and if 
\[\ZZ(A_{\mathbf{q}}^{r,n}) = k[Z_1^{\pm1},\dots, Z_s^{\pm1},Z_{s+1},\dots,Z_t]\]
for some $s, t \geq 0$ then
\[\PZ(P_{\mathbf{q}}^{r,n}) = k[z_1^{\pm1}, \dots, z_s^{\pm1},z_{s+1},\dots, z_t],\]
where if $Z_i = x_1^{t_1}\dots x_m^{t_m}$ then $z_i := y_1^{t_1}\dots y_m^{t_m}$ for each $i$, $1 \leq i \leq t$.
\end{lemma}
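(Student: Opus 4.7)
The plan is to characterize each centre as the $k$-span of its central monomials, observe that the set of central exponents is governed by the \emph{same} linear system in both the quantum and Poisson settings, and then read off the Poisson presentation directly from the quantum one. To begin, for a basis monomial $x^s := x_1^{s_1}\cdots x_n^{s_n}$ of $A_{\mathbf{q}}^{r,n}$, the skew-commutation relations give
\[x_j \cdot x^s \;=\; q^{\sum_i a_{ji}s_i}\, x^s \cdot x_j,\]
so since $q$ is not a root of unity, $x^s$ is central iff $\sum_i a_{ji}s_i = 0$ for all $j$. The $k$-linear independence of the monomials upgrades this to the statement that an arbitrary $f = \sum_s c_s x^s$ is central iff every $x^s$ in its support is. The Leibniz rule on the Poisson side yields
\[\{y_j,\, y^s\} \;=\; \Bigl(\sum_i a_{ji}s_i\Bigr)\, y_j \cdot y^s,\]
and the same $k$-linear-independence argument characterizes $\PZ(P_{\mathbf{q}}^{r,n})$ as the span of those $y^s$ satisfying the identical linear condition.

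Writing $C := \{\,s \in \mathbb{Z}^r \times \mathbb{N}^{n-r} : \sum_i a_{ji}s_i = 0 \text{ for all } j\,\}$, both centres therefore admit a $k$-basis indexed by $C$. Given the hypothesized presentation $\ZZ(A_{\mathbf{q}}^{r,n}) = k[Z_1^{\pm1},\ldots,Z_s^{\pm1},Z_{s+1},\ldots,Z_t]$ with each $Z_i$ a monomial $x^{s^{(i)}}$, the exponent vectors $s^{(i)}$ must generate $C$ as a commutative monoid, with $s^{(1)},\ldots,s^{(s)}$ generating the maximal subgroup of $C$; moreover, algebraic independence of the $Z_i$ in the domain $A_{\mathbf{q}}^{r,n}$ forces $\mathbb{Z}$-linear independence of the $s^{(i)}$. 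Setting $z_i := y^{s^{(i)}}$, I would then argue on the Poisson side that these monomials generate $\PZ(P_{\mathbf{q}}^{r,n})$ as a $k$-algebra, and that the same $\mathbb{Z}$-linear independence of exponents prevents any algebraic relation beyond those expressing a mixed Laurent polynomial ring $k[z_1^{\pm1},\ldots,z_s^{\pm1},z_{s+1},\ldots,z_t]$.

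The main technical subtlety lies in comparing the multiplicative structures: in the quantum case one has $x^s \cdot x^t = q^{\gamma(s,t)} x^{s+t}$ for some bilinear form $\gamma$, whereas on the Poisson side $y^s \cdot y^t = y^{s+t}$ exactly. Because central monomials commute with each other by definition, $\gamma$ restricted to $C \times C$ is symmetric, and the resulting scalar twists in the quantum centre can be absorbed into the scalar field without altering the underlying commutative-algebra structure. This is exactly what allows the exponent-vector combinatorics extracted from the quantum presentation to be transferred verbatim to the Poisson side, yielding the stated generating set and invertibility pattern for $\PZ(P_{\mathbf{q}}^{r,n})$.
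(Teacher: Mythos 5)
Your argument is correct, but it takes a different route from the paper's. The paper's proof is a short reduction: it cites \cite[\S9.6]{GoodearlSummary} for the claim that the (Poisson) centre is generated by (Poisson) central monomials, cites \cite[\S2]{OhSymplectic} for the entire statement in the Laurent case $r=n$, and then handles $r<n$ by observing that $\ZZ(A_{\mathbf{q}}^{r,n}) = \ZZ(A_{\mathbf{q}}^{n,n}) \cap A_{\mathbf{q}}^{r,n}$ and $\PZ(P_{\mathbf{q}}^{r,n}) = \PZ(P_{\mathbf{q}}^{n,n}) \cap P_{\mathbf{q}}^{r,n}$, so the presentation passes to the intersection. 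Your proof is self-contained: you characterize central monomials on both sides by the \emph{same} linear system $\sum_i a_{ji}s_i = 0$ (using $q$ not a root of unity on one side and $\operatorname{char}(k)=0$ on the other) and then argue that the presentation data is entirely captured by the exponent monoid $C$ and the chosen generating vectors $s^{(i)}$, which are identical in both settings. This explains the transfer without appealing to Oh's result and unifies the $r<n$ and $r=n$ cases; what it costs is re-deriving material that the paper prefers to delegate to references.

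One small remark: your final paragraph on the $2$-cocycle $q^{\gamma(s,t)}$ is an unnecessary detour. The lemma is not asserting an isomorphism $\ZZ(A_{\mathbf{q}}^{r,n}) \cong \PZ(P_{\mathbf{q}}^{r,n})$ carrying $x^s\mapsto y^s$, only that the two algebras admit parallel mixed Laurent/polynomial presentations on the corresponding monomials. Both generation (every central monomial is, up to a nonzero scalar, a product of $Z_i^{\pm1}$) and algebraic independence (a polynomial relation among monomial generators in the PBW basis collapses coefficient-by-coefficient to $\mathbb{Z}$-linear independence of the $s^{(i)}$) are statements about the exponent combinatorics only; the quantum twist never enters. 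So the symmetric-cocycle-triviality argument, while true over an algebraically closed field, can be dropped entirely and the proof tightened.
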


\begin{proof}
The first statement is an easy generalisation of \cite[\S9.6]{GoodearlSummary}, and the second statement is proved in \cite[\S2]{OhSymplectic} when $r = n$.  The result for $r < n$ now follows easily by observing
\[\ZZ(A_{\mathbf{q}}^{r,n}) = \ZZ(A_{\mathbf{q}}^{n,n}) \cap A_{\mathbf{q}}^{r,n} \quad \textrm{ and }\quad \PZ(P_{\mathbf{q}}^{r,n}) = \PZ(P_{\mathbf{q}}^{n,n}) \cap P_{\mathbf{q}}^{r,n},\]
and the fact that the (Poisson) centre is generated by the (Poisson) central monomials.
\end{proof}

\begin{remark}\label{rem:computing central monomials}
Given an additively anti-symmetric matrix $\mathbf{q} = (a_{ij})$ defining $A_{\mathbf{q}}^{r,n}$ or $P_{\mathbf{q}}^{r,n}$ as above, it is very easy to compute which monomials are central: in each case, a monomial $w_1^{m_1}w_2^{m_2}\dots w_n^{m_n}$ is central if and only if
\[\sum_{j=1}^n a_{ij}m_j = 0, \qquad \forall i, 1 \leq i \leq n.\]
\end{remark}

Before diving into the case-by-case analysis of Theorem~\ref{res:main theorem on equality of intermediate spaces}, we first illustrate the general approach with some examples.  When performing computations in $\OO_q(SL_3)$ or $\OO(SL_3)$, it is useful to keep in mind the commutation relations for minors listed in \cite[\S1.3]{GL1} (for the quantum case) and \cite[\S5.1.1]{MeThesis} (for the Poisson case).

\begin{example}\label{ex:invert d in gl2}
Let $a,b,c,d$ denote the standard generators in $\OO_q(M_2)$ or $\OO(M_2)$, and $\Delta$ the $2\times 2$ determinant.  If we invert $d$ (recall that this is possible even in the quantum case thanks to the results of \cite{MinorsOreSets}) then $a = (\Delta + qbc)d^{-1}$; this suggests we might try to rewrite the localized algebra in terms of the variables $\{\Delta, b,c,d^{\pm1}\}$.  Indeed, we obtain isomorphisms
\[\OO_q(M_2)[d^{-1}] \cong A_{\mathbf{q}}^{1,4}, \qquad \OO(M_2)[d^{-1}] \cong P_{\mathbf{q}}^{1,4},\]
on the variables $\Delta,b,c,d^{\pm1}$ with respect to the matrix
\[\mathbf{q} = \begin{bmatrix} 0& 0 & 0 & 0 \\ 0 & 0 & 0 & 1 \\ 0 & 0 & 0 & 1 \\ 0 & -1 & -1 & 0 \end{bmatrix}.\]
The fact that we obtain genuine isomorphisms rather than quotients of $A_{\mathbf{q}}^{r,n}$ and $P_{\mathbf{q}}^{r,n}$ follows from a GK dimension argument as in \cite[Lemma 4.3 (b)]{GL1}.
\end{example}

\begin{example}\label{ex:invert stuff in sl3}
Now let $B$ denote $\OO_q(SL_3)$ or $\OO(SL_3)$, and consider what happens when we invert $[23|23]$ and $W_{33}$.  As in Example~\ref{ex:invert d in gl2}, we have $W_{22} = ([23|23] + qW_{23}W_{32})W_{33}^{-1}$ and the relation $D_q=1$ (resp $D = 1$) can be rewritten as
\[W_{11} = (1 + qW_{12}[23|13] - q^2W_{13}[23|12])[23|23]^{-1},\]
i.e. inverting $W_{33}$ and $[23|23]$ allows us to make the change of variables $W_{22} \rightarrow [23|23]$ and eliminate the relation $D_q=1$ in the localized algebra.  This brings the structure of $B\big[[23|23]^{-1},W_{33}^{-1}\big]$ much closer to that of the algebras in Definition~\ref{def:mixed affine spaces}, although currently the relations involving $W_{12}$ or $W_{21}$ are still badly behaved.

If we now also invert $W_{23}$ and $W_{32}$, we can replace the generators $W_{12}$ and $W_{21}$ by $[12|23]$ and $[23|12]$ respectively (using the same idea as Example~\ref{ex:invert d in gl2} above); the advantage of this is that these new elements are (Poisson-)normal.  We have obtained the compatible algebras
\begin{equation}\label{eq:slightly localized SL3 v1}\begin{gathered}\OO_q(SL_3)\big[[23|23]^{-1},X_{23}^{-1},X_{32}^{-1},X_{33}^{-1}\big] \cong A_{\mathbf{q}}^{4,8},\\
\OO(SL_3)\big[[23|23]^{-1},Y_{23}^{-1},Y_{32}^{-1},Y_{33}^{-1}\big] \cong P_{\mathbf{q}}^{4,8},\end{gathered}\end{equation}
on the variables $\{[12|23],W_{13},[23|12],[23|23]^{\pm1},W_{23}^{\pm1},W_{31},W_{32}^{\pm1},W_{33}^{\pm1}\}$.  The entries of the matrix $\mathbf{q}$ can easily be computed from the relations in $\OO_q(SL_3)$ and $\OO(SL_3)$; we leave this as an exercise for the interested reader.

Note that we could instead have chosen to invert the set $\{[23|23],W_{33},[13|23],[23|13]\}$ (corresponding to $E_K$ with $K = I_{213,213}$).  Now we have the identities
\begin{gather*}
W_{12} = ([13|23] + qW_{13}W_{32})W_{33}^{-1}, \qquad W_{21} = ([23|13] + qW_{23}W_{31})W_{33}^{-1}\\
W_{23} = q[13|23]^{-1}\Big([23|23]W_{13} + q^{-2}[12|23]W_{33}\Big)\\
W_{32} = q^{-1}\Big(W_{31}[23|23] + q^2W_{33}[23|12]\Big)[23|13]^{-1}
\end{gather*}
where the final two equalities follow from a Laplace expansion as in \cite[E1.3a]{GL1}.  Some computation with the commutation relations in \cite[\S1.3]{GL1} and Poisson bracket relations in \cite[\S5.1.1]{MeThesis} verifies that we again have compatible algebras
\begin{equation}\label{eq:slightly localized SL3 v2}\OO_q(SL_3)[E_{I_{213,213}}^{-1}] \cong A_{\mathbf{q}'}^{4,8}, \qquad \OO(SL_3)[E_{I_{213,213}}^{-1}] \cong P_{\mathbf{q'}}^{4,8}\end{equation}
this time on the variables $[13|23]^{\pm1},W_{13},[23|13]^{\pm1},[23|23]^{\pm1},[12|23],W_{31},[23|12],W_{33}^{\pm1}$.

Finally, similar computations show that we can combine these two approaches (for example by inverting $W_{23}$ and $[23|13]$, along with the ubiquitous $W_{33}$ and $[23|23]$) and still obtain compatible quantum/Poisson algebras.
\end{example}
\begin{remark}\label{rem:centers of slightly localized SL3}
If we compute the matrix $\mathbf{q}$ for the algebras in \eqref{eq:slightly localized SL3 v1} or \eqref{eq:slightly localized SL3 v2} and apply Remark~\ref{rem:computing central monomials}, we find that the centres of these algebras are $k$ in each case.  This observation will be useful in Theorem~\ref{res:main theorem on equality of intermediate spaces}, since any algebra which embeds into one of the algebras \eqref{eq:slightly localized SL3 v1}-\eqref{eq:slightly localized SL3 v2} will also have trivial centre.
\end{remark}

We will not usually compute the centres $\PZ_{JK}$ and $\ZZ_{JK}$ explicitly; for our purposes, it will suffice to show that for each pair of $\HH$-primes $J \subset K$ the centres $\PZ_{JK}$ and $\ZZ_{JK}$ are isomorphic and that we can move between them in an intuitive way.  We are now in a position to make this idea precise and to prove our first main theorem.

\begin{theorem}\label{res:main theorem on equality of intermediate spaces}
For each pair of (Poisson) $\HH$-primes $J \subset K$, we can find some integers $r,n$ such that
\[\ZZ_{JK} = A_{\mathbf{0}}^{r,n} \quad \textrm{ and } \quad \PZ_{JK} = P_{\mathbf{0}}^{r,n},\]
where $\mathbf{0}$ denotes the $n\times n$ zero matrix.

Further, there is a presentation $\ZZ_{JK} = k[Z_1^{\pm1}, \dots, Z_r^{\pm1},Z_{r+1},\dots, Z_n]$ with each $Z_i$ a product of quantum minors, such that there is an isomorphism of commutative algebras determined by 
\[\Theta_{JK}: \ZZ_{JK} \longrightarrow \PZ_{JK}: Z_i \mapsto \theta(Z_i).\]
(Recall that $\theta$ is the preservation of notation map from Definition~\ref{def:preservation of notation map} which replaces each quantum minor with the corresponding minor.)
\end{theorem}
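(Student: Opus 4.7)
The plan is to carry out an explicit case-by-case computation over pairs of $\HH$-primes $J \subset K$, leveraging the change-of-variables strategy illustrated in Examples~\ref{ex:invert d in gl2} and~\ref{ex:invert stuff in sl3}. For each pair, first reduce to $J = 0$ by passing to $B/J$, then localize $B/J$ at the multiplicative set $E_{JK}$ (which by Corollary~\ref{res:EE_JK is the correct set} produces the correct centres). The target is to present the localized algebra as $A_{\mathbf{q}}^{r,n}$ (quantum) or $P_{\mathbf{q}}^{r,n}$ (Poisson) for an appropriate additively skew-symmetric matrix $\mathbf{q}$; Lemma~\ref{res:extension of Oh centres result} then reduces the computation of $\ZZ_{JK}$ and $\PZ_{JK}$ to identifying the central monomials via the linear conditions in Remark~\ref{rem:computing central monomials}.

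For the substitutions themselves, the universal move is to invert $[23|23]$ and $W_{33}$ whenever they are nonzero in $B/J$: this lets us replace $W_{22}$ by $[23|23]$ and eliminate the quantum-determinant relation by substituting out $W_{11}$. Additional generators of $E_{JK}$ are then inverted to linearize the remaining obstructions, typically replacing the badly-behaved $W_{12}$ and $W_{21}$ by $[12|23]$, $[23|12]$, $[13|23]$, or $[23|13]$, each of which is (Poisson-)normal in the relevant quotient. After these substitutions the remaining generators pairwise skew-commute (resp.\ satisfy $\{y_i,y_j\} = a_{ij}y_iy_j$), identifying $(B/J)[E_{JK}^{-1}]$ with $A_{\mathbf{q}}^{r,n}$ or $P_{\mathbf{q}}^{r,n}$; compatibility of the matrix $\mathbf{q}$ across the quantum and Poisson cases is guaranteed by Remark~\ref{rem:H primes are difficult}(1), since the minors lying in $J$ are the same in $\OO_q(SL_3)$ and $\OO(SL_3)$.

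Because the linear conditions on exponent vectors that single out central monomials depend only on $\mathbf{q}$, and $\mathbf{q}$ matches on both sides, the two centres are generated by the ``same'' list of monomials: a product $Z_i = w_{i_1}^{t_1}\cdots w_{i_m}^{t_m}$ of localized variables on the quantum side and its formal $\theta$-image on the Poisson side. Each $Z_i$ is a product of quantum minors and their inverses by construction, so the assignment $Z_i \mapsto \theta(Z_i)$ defines a bijection between the generating sets. The relations among the $Z_i$ on both sides are trivial (they are central, so the governing matrix is $\mathbf{0}$), so this bijection extends uniquely to the required isomorphism $\Theta_{JK}: \ZZ_{JK} \to \PZ_{JK}$, and the presentations take the form $A_{\mathbf{0}}^{r,n}$ and $P_{\mathbf{0}}^{r,n}$ on the nose.

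The main obstacle is organizational rather than conceptual: the number of admissible pairs $J \subset K$, though significantly trimmed by the transpose symmetry $\tau$ and by the fact that some $E_K$ are empty (forcing the localization to be trivial and the centre to be $k$), is still large enough that a careful bookkeeping scheme is required to cover every case. A secondary technical point is to verify, in each case, that the change-of-variables produces a \emph{genuine} isomorphism onto $A_{\mathbf{q}}^{r,n}$ (resp.\ $P_{\mathbf{q}}^{r,n}$) rather than a proper quotient; this will be handled by a GK-dimension (or transcendence-degree) count in the spirit of \cite[Lemma 4.3(b)]{GL1}, matched with the analogous Poisson-dimension argument, so that surjectivity of the natural map from the skew-polynomial presentation onto the localization automatically upgrades to injectivity.
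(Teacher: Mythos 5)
Your outline matches the paper's strategy in broad strokes: case-by-case over the $36 \times 36$ poset reduced via $\tau$, inverting $[23|23]$ and $W_{33}$ to eliminate $W_{11}$ and trade $W_{22}$ for $[23|23]$, further inversions to replace $W_{12}$ and $W_{21}$ by Poisson-normal/normal minors, presenting $(B/J)[E_{JK}^{-1}]$ as $A_{\mathbf{q}}^{r,n}$ or $P_{\mathbf{q}}^{r,n}$, applying Lemma~\ref{res:extension of Oh centres result} and Remark~\ref{rem:computing central monomials}, and using GK-dimension to upgrade surjections to isomorphisms. Your observation that compatibility of the matrix $\mathbf{q}$ traces back to the parallel structure of generating sets on the two sides is also sound.

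However, there is a genuine gap that your plan does not address. You claim that after inverting the generators of $E_{JK}$ ``the remaining generators pairwise skew-commute,'' and you attribute the exceptional cases to ``some $E_K$ being empty, forcing the localization to be trivial.'' Neither is correct. Whenever $K = I_{\omega_+,\omega_-}$ with $\omega_+ = 123$ or $\omega_- = 123$, the set $E_{\omega_+}$ (resp.\ $E_{\omega_-}$) contributes nothing, so $E_{JK}$ is too small: the localization $(B/J)[E_{JK}^{-1}]$ is neither trivial nor of the form $A_{\mathbf{q}}^{r,n}$ / $P_{\mathbf{q}}^{r,n}$. The problematic generator (typically $W_{12}$ or $W_{21}$) cannot be normalized by any substitution using only elements of $E_{JK}$, and Lemma~\ref{res:extension of Oh centres result} is therefore not applicable. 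This is not an edge case: it occurs in Cases VI, VII, and VIII of the paper's proof, including the pair $J = 0$, $K = I_{123,123}$, and thus affects a substantial chunk of the poset.

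The missing ingredient is an embedding argument that your proposal never invokes. One further localizes $(B/J)[E_{JK}^{-1}]$ at an element \emph{outside} $E_{JK}$ (e.g.\ $W_{23}$ or $W_{32}$, or a longer minor such as $[13|23]$) to reach a skew-commuting algebra $B''$; then computes $\ZZ(B'')$ or $\PZ(B'')$ via Remark~\ref{rem:computing central monomials}; and finally argues that the original centre is a subalgebra of $\ZZ(B'')$, in fact equal to it, because the extra inverted variable does not occur in any central monomial of $B''$ (Remark~\ref{rem:centers of slightly localized SL3}). Without this step, the cases with $\omega_+ = 123$ or $\omega_- = 123$ are simply not covered by the change-of-variables machinery you describe, and the identification of $\ZZ_{JK}$ with $\PZ_{JK}$ is left unproved precisely where the direct approach fails.
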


\begin{proof}
The proof is by case-by-case analysis.  In each case, we fix some $\HH$-prime $J$ and consider all $\HH$-primes $K$ such that $J \subset K$.  Recall that when referring to a $\HH$-prime in terms of the generating sets in Figure~\ref{fig:H_primes_gens}, $I_{\omega_{+},\omega_{-}}$ will denote the $\HH$-prime listed in position $(\omega_{+},\omega_{-})$.

Recall also that there is a transposition automorphism $\tau: W_{ij} \longrightarrow W_{ji}$ defined on both $\OO_q(SL_3)$ and $\OO(SL_3)$.  From Figure~\ref{fig:H_primes_gens} and Figure~\ref{tab:gens of EEJK} we can easily check that $\tau(I_{\omega_{+},\omega_{-}}) = I_{\omega_{-}^{-1},\omega_{+}^{-1}}$ and $\tau(E_{\omega_{+},\omega_{-}}) = E_{\omega_{-}^{-1},\omega_{+}^{-1}}$, so that if $I_{\omega} \subset I_{\omega'}$ then $\tau$ induces an isomorphism of $k$-algebras or Poisson algebras
\[(B/I_{\omega})[E_{I_{\omega},I_{\omega'}}^{-1}] \stackrel{\sim}{\longrightarrow} (B/I_{\omega^{-1}})[E_{I_{\omega^{-1}},I_{\omega'^{-1}}}^{-1}],\]
where we use $\omega^{-1}$ as a shorthand for $(\omega_{-}^{-1}, \omega_{+}^{-1})$.  This symmetry will allow us to reduce the number of examples we need to compute.

\textbf{Case I}. $J = I_{\omega}$, for $\omega$ one of:
\begin{gather*}
(321,132), \ (321,213),\ (231,312), \ (231,123),\ (312,231), \ (312,123) \\
(132,321),\ (132,213),\ (213,321),\ (213,132), \ (123,231),\ (123,312).
\end{gather*}
Recall from \cite[Eqn (3.7)]{GBrown} that $\ZZ_{JK}$ is a subalgebra of $\ZZ(A_J)$, and similarly from Section~\ref{sec:framework} that $\PZ_{JK}$ is a subalgebra of $\PZ(R_J)$.  In each of the 12 cases listed here, we see immediately from \cite[Figure 5]{GL1} and Table~\ref{tab:gens of centre} that $\ZZ(A_J)$ and $\PZ(R_J)$ are both trivial, and hence $\ZZ_{JK} = \PZ_{JK} = k$ for all $K \supset J$ as well.

\textbf{Case II}. $J = I_{\omega}$, for $\omega$ one of:
\[
\begin{array}{cccc}
\hprime{
 \circ & \bullet & \bullet \\
 \bullet & \circ & \bullet \\
 \bullet & \circ & \circ 
}
&
\hprime{
 \circ & \bullet & \bullet \\
 \circ & \circ & \bullet \\
 \bullet & \bullet & \circ 
}
& 
\hprime{
 \circ & \bullet & \bullet \\
 \bullet & \circ & \circ \\
 \bullet & \bullet & \circ 
}
&
\hprime{
  \circ & \circ & \bullet \\
  \bullet & \circ & \bullet \\
  \bullet & \bullet & \circ 
 } \\
(132,123) & (213,123) &  (123,132) & (123,213)
\end{array}\]
In each of these cases, we will see that $\OO_q(SL_3)/J$ and $\OO(SL_3)/J$ are already compatible algebras of form $A_{\mathbf{q}}^{r,n}$, $P_{\mathbf{q}}^{r,n}$, and that localizing at the set $E_{JK}$ just corresponds to inverting some generators of these algebras.  Consider for example $J = I_{132,123}$, where it is easily computed that we have
\begin{gather*}\OO_q(SL_3)/J = A_{\mathbf{q}}^{2,3} = k_{\mathbf{q}}[X_{22}^{\pm1},X_{32},X_{33}^{\pm1}], \\
\OO(SL_3)/J = P_{\mathbf{q}}^{2,3} = k[Y_{22}^{\pm1},Y_{32},Y_{33}^{\pm1}]\end{gather*}
with respect to the matrix
\begin{equation}\label{eq:q in case II}\mathbf{q} = \begin{bmatrix} 0 & 1 & 0 \\ -1 & 0 & 1 \\ 0 & -1 & 0 \end{bmatrix}.\end{equation}
Only one $\HH$-prime lies above $J$, namely $K=I_{123,123}$, the maximal $\HH$-prime.  Since $[23|23] = W_{22}W_{33}$ modulo $J$, we see that $E_{JK} = \{W_{22}^iW_{33}^j:i,j \geq 0\}$ and so $(\OO_q(SL_3)/J)[E_{JK}^{-1}] = A_{\mathbf{q}}^{2,3}$, $(\OO(SL_3)/J)[E_{JK}^{-1}] = P_{\mathbf{q}}^{2,3}$ (with $\mathbf{q}$ as in \eqref{eq:q in case II}).  We can now apply Lemma~\ref{res:extension of Oh centres result} to complete the proof in this case, and the other three cases follow by similar analysis.

\textbf{Case III}. $J = I_{\omega}$, for  $\omega$ one of 
\[\begin{array}{cc}
\hprime{
 \circ & \bullet & \bullet \\
 \bullet & \circ & \circ \\
 \bullet & \circ & \circ 
}
&
\hprime{
 \circ & \circ & \bullet \\
 \circ & \circ & \bullet \\
 \bullet & \bullet & \circ 
}
\\
(132,132)& (213,213)
\end{array}\]
We focus the case $J = I_{132,132}$, since $I_{213,213}$ follows similarly.  Observe that $\OO_q(SL_3)/J \cong \OO_q(GL_2)$, where we write $a,b,c,d, \Delta$ for the standard generators and determinant of $\OO_q(GL_2)$ and the isomorphism is given by $X_{22},X_{23},X_{32},X_{33} \mapsto a,b,c,d$ and $X_{11} \mapsto \Delta^{-1}$.  Similarly, there is an isomorphism of Poisson algebras $\OO(SL_3)/J \cong \OO(GL_2)$.

As in Example~\ref{ex:invert d in gl2}, upon inverting $W_{33}$ and $[23|23]$ we obtain the quantum algebra $A_{\mathbf{q}}^{2,4}$ (resp. Poisson algebra $P_{\mathbf{q}}^{2,4}$, for the same matrix $\mathbf{q}$) on the generators $[23|23]^{\pm1}$, $W_{23}$, $W_{32}$, $W_{33}^{\pm1}$.  Three $\HH$-primes lie above $J$, which we list below with their corresponding multiplicative sets $E_{JK}$:
\begin{align*}
K &= I_{132,123} & E_{JK} &= \{[23|23]^iW_{32}^jW_{33}^k: i, j, k \geq 0\}\\
K &= I_{123,132} & E_{JK} &= \{[23|23]^iW_{23}^jW_{33}^k: i, j, k \geq 0\}\\
K &= I_{123,123} & E_{JK} &= \{[23|23]^iW_{33}^j: i,j \geq 0\}
\end{align*}
In other words, we are simply inverting some of the generators of our algebras $A_{\mathbf{q}}^{2,4}$ and $P_{\mathbf{q}}^{2,4}$ from above, and so the resulting localizations are again of this form (with respect to the same matrix $\mathbf{q}$, possibly larger $r$). The result now follows by Lemma~\ref{res:extension of Oh centres result}.

The case $J = I_{213,213}$ follows by a similar analysis, since $\OO_q(SL_3)/I_{213,213} \cong \OO_q(GL_2)$ and $\OO(SL_3)/I_{213,213} \cong \OO(GL_2)$ as well.

\textbf{Case IV}. $J = I_{\omega}$, for $\omega$ one of:
\[
\begin{array}{cccc}
\hprime{
 \circ & \bullet & \bullet \\
\circ & \circ & \circ \\
\bullet & \circ & \circ 
}
&
\hprime{
 \circ & \circ & \bullet \\
\circ & \circ & \bullet \\
\bullet & \circ & \circ 
}
&
\hprime{
 \circ & \circ & \bullet \\
 \bullet & \circ & \circ \\
 \bullet & \circ & \circ 
}
&
\hprime{
 \circ & \circ & \bullet \\
 \circ & \circ & \circ \\
 \bullet & \bullet & \circ 
}
\\
(231,132) & (231,213) & (132,312) & (213,312) \\
 & & & 
\\
\hprime{
 \circ & \bullet & \bullet \\
\hsq  & \circ \\
 &  & \circ 
}
&
\hprime{
 \circ & \circ & \bullet \\
\hsq  & \bullet \\
 &  & \circ 
}
&
\hprime{
 \circ &  \hsq  \\
 \bullet &  &  \\
 \bullet & \circ & \circ 
}
&
\hprime{
 \circ &  \hsq  \\
 \circ &  &  \\
 \bullet & \bullet & \circ \\
}
\\
(312,132) & (312,213) & (132,231) & (213,231) 
\end{array}
\]
This is very similar to Case III, in that $\OO_q(SL_3)/J$ and $\OO(SL_3)/J$ are easily seen to simplify to the required form upon inverting $[23|23]$ and $W_{33}$.

For example, when $J = I_{231,132}$ we compute as in Case III to obtain the compatible algebras
\begin{equation}\label{eq:localizations in case IV}
\big(\OO_q(SL_3)/J\big)[X_{33}^{-1},[23|23]^{-1}] \cong A_{\mathbf{q}}^{2,5}, \qquad \big(\OO(SL_3)/J\big)[Y_{33}^{-1},[23|23]^{-1}] \cong P_{\mathbf{q}}^{2,5}
\end{equation}
on the generators $[23|23]^{\pm1}, W_{21},W_{23},W_{32},W_{33}^{\pm1}$, with respect to the matrix
\begin{equation*}
\mathbf{q} = \begin{bmatrix} 0&1&0&0&0 \\ -1&0&1&0&0 \\ 0&-1&0&0&1 \\ 0&0&0&0&1 \\ 0&0&-1&-1&0 \end{bmatrix}
\end{equation*}
There are 7 $\HH$-primes lying above $J$, but in each case inverting the remaining elements of $E_{JK}$ corresponds to inverting some of the generators in \eqref{eq:localizations in case IV} (note that $[23|12] \equiv W_{21}W_{32}$ and $[23|13] \equiv W_{21}W_{33}$ modulo $J$), i.e. $B/J[E_{JK}^{-1}]$ is again a quantum/Poisson algebra with respect to the same matrix $\mathbf{q}$.  As in previous cases, the result now follows by Lemma~\ref{res:extension of Oh centres result}, and the cases $\omega \in \{(231,213),(132,312),(213,312)\}$ follow by similar analysis. 

The remaining four $\HH$-primes require slightly more care, since they have a $2\times 2$ minor appearing in their generating set; for example, consider $J:= I_{312,132}$.  Upon inverting $W_{33}$ and $[23|23]$, we can make changes of variables $W_{22} \rightarrow [23|23]$, $W_{21} \rightarrow [23|13]$ as in Example~\ref{ex:invert stuff in sl3}; upon substituting these into the equality $[23|12] = 0$ and simplifying, we obtain $W_{31} = q^{-1}[23|23]^{-1}[23|13]W_{32}$.  In other words, we have obtained compatible algebras
\[\OO_q(SL_3)/J\big[[23|23]^{-1},X_{33}^{-1}\big] = A_{\mathbf{q}}^{2,5}, \qquad \OO(SL_3)/J\big[[23|23]^{-1},Y_{33}^{-1}\big],\]
on the generators $[23|23]^{\pm1},W_{23},W_{32},W_{33}^{\pm1},[23|13]$.  The remaining analysis follows as above.

\textbf{Case V}. $J = I_{\omega}$, for $\omega$ one of
\[
\begin{array}{cc}
\hprime{
 \circ &  \hsq  \\
\circ &  &  \\
\bullet & \circ & \circ \\
}
&
\hprime{
 \circ & \circ & \bullet \\
 \hsq  & \circ \\
 &  & \circ \\
}
\\
(231,231) & (312,312) 
\end{array}\]

Let $J := I_{231,231}$.  Similar to the computations for $I_{312,132}$ in Case IV, we find that
\[\OO_q(SL_3)/J\big[[23|23]^{-1},X_{33}^{-1}\big] \cong A_{\mathbf{q}}^{2,6}, \qquad \OO(SL_3)/J\big[[23|23]^{-1},Y_{33}^{-1}\big] \cong P_{\mathbf{q}}^{2,6},\]
which are compatible quantum/Poisson algebras in the variables 
\begin{equation}\label{eq:variables in case V}[23|23]^{\pm1},W_{23},W_{32},W_{33}^{\pm1},W_{21},[13|23].\end{equation}
There are 15 $\HH$-primes lying strictly above $J$, corresponding to $\omega = (\omega_{+},\omega_{-}) \in \{231,132,213,123\} \times \{231,132,213,123\} \Big\backslash (231,231)$.  By observing that $[23|12] \equiv W_{21}W_{32}$, $[23|13] \equiv W_{21}W_{33}$, and $W_{13} \equiv q^{-1}[23|23]^{-1}[13|23]W_{23}$ modulo $J$, we see that localizing at $E_{JK}$ for each choice of $K \supset J$ once again corresponds to inverting some of the generators in \eqref{eq:variables in case V}.  Now apply Lemma~\ref{res:extension of Oh centres result}.

The case $J:= I_{312,312}$ follows from the symmetry induced by $\tau$, since $\tau(I_{231,231}) = I_{312,312}$.

\textbf{Case VI}. $J = I_{\omega}$, for $\omega$ one of 
\[
\begin{array}{cc}
\hprime{
 \circ & \circ & \circ \\
 \bullet & \circ & \circ \\
 \bullet & \bullet & \circ \\
}
&
\hprime{
 \circ & \bullet & \bullet \\
\circ & \circ & \bullet \\
\circ & \circ & \circ \\
}
\\
(123,321) & (321,123)
\end{array}
\]
Consider $J := I_{123,321}$.  There are only 5 $\HH$-primes lying above $J$, namely $I_{\omega}$ with $\omega \in \{(123,\omega_{-}): \omega_{-} \in S_3 \backslash \{321\}\}$, but this time we do not obtain an algebra of the form $A_{\mathbf{q}}^{r,n}$ (resp. $P_{\mathbf{q}}^{r,n}$) when inverting only $W_{33}$ and $[23|23]$.  Instead, we expand our initial Ore set slightly and compute that
\begin{equation}\label{eq:algebras for case VI}\begin{aligned}
\OO_q(SL_3)/J\big[[23|23]^{-1},X_{33}^{-1},X_{23}^{-1}\big] &= A_{\mathbf{q}}^{3,5} = k_{\mathbf{q}}[[12|23],X_{13},X_{22}^{\pm1},X_{23}^{\pm1},X_{33}^{\pm1}], \\
\OO(SL_3)/J\big[[23|23]^{-1},Y_{33}^{-1},Y_{23}^{-1}\big] &= P_{\mathbf{q}}^{3,5} = k[[12|23],Y_{13},Y_{22}^{\pm1},Y_{23}^{\pm1},Y_{33}^{\pm1}],
\end{aligned}\end{equation}
(note that $[23|23] = W_{22}W_{33}$ mod $J$), and the result now follows as above for the cases $K = I_{123,132},I_{123,312},I_{123,231}$.

When $K = I_{123,213}$, we note that 
\[E_{JK} = \{W_{33}^i[23|23]^j[13|23]^k:i,j,k\geq 0\} = \{W_{33}^iW_{22}^jW_{12}^k: i,j,k \geq 0\},\]
(using $[23|23] \equiv W_{22}W_{33}$ and $[13|23] \equiv W_{12}W_{33}$ mod $J$). Since $W_{12}$ has been inverted, we can make the change of variables $W_{23} \rightarrow [12|23]$ and obtain compatible algebras in the variables $W_{11}, W_{12}^{\pm1}, W_{13},W_{22}^{\pm1}, [12|23],W_{33}^{\pm1}$.

One possible choice of $K$ remains: the maximal $\HH$-prime $I_{123,123}$.  As noted above, the algebra $B' := B/J\big[[23|23]^{-1},W_{33}^{-1}\big]$ (i.e. $B/J[E_{JK}^{-1}]$ for $K = I_{123,123}$) is \textit{not} an algebra of the form $A_{\mathbf{q}}^{r,n}$ (resp. $P_{\mathbf{q}}^{r,n}$), since there is no inversion and change of variables we can make to eliminate the problems caused by $W_{12}$.  We introduce a new tactic: observe that $B'$ embeds in its localization $B'[W_{23}^{-1}]$, which induces an embedding of centres (resp. Poisson centres) as well.  $B'[W_{23}^{-1}]$ is precisely the quantum (resp. Poisson) algebra described in \eqref{eq:algebras for case VI}, and we may compute the centre of this algebra explicitly using the formula in Remark~\ref{rem:computing central monomials}.  The centre (resp. Poisson centre) of $B'[W_{23}^{-1}]$ turns out to be just $k$, and hence we must have $\ZZ(B') = k$, $\PZ(B')=k$ as well.

The corresponding result for $J := I_{321,123}$ now follows via $\tau$.

\textbf{Case VII}. $J = I_{\omega}$, for $\omega$ one of
\[
\begin{array}{cccc}
\hprime{
 \circ & \circ & \circ \\
\circ & \circ & \circ \\
\bullet & \circ & \circ \\
}
&
\hprime{
 \circ & \circ & \bullet \\
\circ & \circ & \circ \\
\circ & \circ & \circ \\
}
&
\hprime{
 \circ & \circ & \circ \\
 \hsq  & \circ \\
 &  & \circ \\
}
&
\hprime{
 \circ & \hsq \\
\circ & & \\
\circ & \circ & \circ \\
}
\\
(231,321) & (321,312) & (312,321) & (321,231)
\end{array}
\]
We begin with the case $J:=I_{231,321}$; the others follow by similar reasoning and/or application of the automorphism $\tau$.  There are many $\HH$-primes which contain $J$: all ideals of the form $I_{\omega}$ with $\omega = (\omega_{+},\omega_{-}) \in \{231,132,213,123\} \times S_3$.  Observe that upon inverting $[23|23]$ and $W_{33}$ we are once again only one step away from obtaining an algebra of the desired form: this is because $W_{21}$ is (Poisson-)normal modulo $J$. 

As in previous cases we can compute that $B/J\big[[23|23]^{-1},W_{33}^{-1},W_{23}^{-1}\big]$ is of the form $A_{\mathbf{q}}^{3,7}$, resp. $P_{\mathbf{q}}^{3,7}$ (for an appropriate $\mathbf{q}$), and this is sufficient to handle all $K = I_{\omega}$ with $\omega \in \{231,132,213,123\} \times \{321,231,312,132\}$ in the same manner as previous cases.  Similarly, if we instead invert $\{W_{33},[23|23],[13|23]\}$, we obtain compatible algebras
\begin{equation}\label{eq:algebras in case VII}\begin{gathered}\OO_q(SL_3)/J\big[[23|23]^{-1},X_{33}^{-1},[13|23]^{-1}\big] = A_{\mathbf{q}}^{3,7} \\ 
\OO(SL_3)/J\big[[23|23]^{-1},Y_{33}^{-1},[13|23]^{-1}\big] = P_{\mathbf{q}}^{3,7},\end{gathered}\end{equation}
on the generators $[13|23]^{\pm1},W_{13},W_{21},[23|23]^{\pm1},[12|23],W_{32},W_{33}^{\pm1}$.  Applying Remark~\ref{rem:computing central monomials}, we compute that the only central monomials of \eqref{eq:algebras in case VII} are powers of $(W_{21}W_{32}W_{13}^{-1})^{\pm1}$, and hence $\ZZ_{JK} = \PZ_{JK} = k$ for $K \in \{I_{132,213},I_{213,213},I_{123,213}\}$, and $\ZZ_{JK},\PZ_{JK} = k[W_{13}W_{21}^{-1}W_{32}^{-1}]$ for $K = I_{231,213}$.

Once again, for $K=I_{\omega_{+},123}$, $\omega_{+} \in \{231,132,213,123\}$ the algebra $B/J[E_{JK}^{-1}]$ is not of a form where we can easily compute its centre, and so we apply the same technique as in Case VI.  There are embeddings $B/J[E_{\omega_{+},123}^{-1}] \hookrightarrow B/J[E_{\omega_{+},213}^{-1}]$, and the centres (resp. Poisson centres) of the latter algebras have been computed in the previous paragraph.  Thus for $K \in \{I_{132,123},I_{213,123},I_{123,123}\}$ we immediately conclude that $\ZZ_{JK} = \PZ_{JK} = k$.  Now let $K = I_{231,123}$, and $K' = I_{231,213}$.  From Figure~\ref{tab:gens of EEJK} we have $E_{JK} \subset E_{JK'}$ and hence $\PZ_{JK} \subseteq \PZ_{JK'} = k[Y_{13}Y_{21}^{-1}Y_{32}^{-1}]$; however, since $Y_{13}Y_{21}^{-1}Y_{32}^{-1} \in \OO(SL_3)/J[E_{JK}^{-1}]$ we must in fact have $\PZ_{JK} = \PZ_{JK'}$.  Similarly, $\ZZ_{JK} = \ZZ_{JK'}$.

This concludes the analysis for $J = I_{231,321}$, and the other three cases follow by similar computations and the symmetry induced by $\tau$.

\textbf{Case VIII}. $J = I_{\omega}$, for $\omega$ one of
\[\begin{array}{cc}
\hprime{
 \circ & \circ & \circ \\
\circ & \circ & \circ \\
\circ & \circ & \circ \\
}
&
\hprime{
 \circ & \bullet & \bullet \\
 \bullet & \circ & \bullet \\
 \bullet & \bullet & \circ \\
}
\\
(321,321) & (123,123)
\end{array}\]
The two remaining $\HH$-primes are the extreme cases, i.e. the minimal and maximal $\HH$-primes.  When $J = I_{123,123}$, there is no $\HH$-prime $K$ such that $J \subsetneq K$, so there is nothing to prove in this case.

Now let $J = I_{321,321} = (0)$.  For all but 11 $\HH$-primes $K$ (i.e. all $K = I_{\omega_{+},\omega_{-}}$ with $\omega_{+} \neq 123$, $\omega_{-} \neq 123$) we have already seen in Example~\ref{ex:invert stuff in sl3} that $\OO_q(SL_3)[E_{JK}^{-1}]$ and $\OO(SL_3)[E_{JK}^{-1}]$ are compatible quantum/Poisson algebras, so the result follows immediately in these cases from Lemma~\ref{res:extension of Oh centres result}.

For five of the remaining $\HH$-primes, namely $K = I_{\omega}$ for 
\[\omega \in \{(132,123),(213,123),(123,123),(123,132),(123,213)\},\]
observe that the localization $B[E_{JK}^{-1}]$ embeds in one of the algebras described in Example~\ref{ex:invert stuff in sl3} (see \eqref{eq:slightly localized SL3 v1}, \eqref{eq:slightly localized SL3 v2}) and so the centre is trivial by Remark~\ref{rem:centers of slightly localized SL3}.

For the final six $\HH$-primes, we proceed as in Cases VI and VII: take $E' = E_{JK} \cup \{W_{32}\}$ if $\omega_{+} = 123$, or $E' = E_{JK}\cup\{W_{23}\}$ if $\omega_{-} = 123$.  Then $B[E_{JK}^{-1}]\hookrightarrow B[E'^{-1}]$, $B[E'^{-1}]$ is an algebra of the form $A_{\mathbf{q}}^{r,n}$ (resp. $P_{\mathbf{q}}^{r,n}$) by Example~\ref{ex:invert stuff in sl3}, and we have equalities
\[\ZZ\big(\OO_q(SL_3)[E_{JK}^{-1}]\big) = \ZZ\big(\OO_q(SL_3)[E'^{-1}]\big), \quad \PZ\big(\OO(SL_3)[E_{JK}^{-1}]\big) = \PZ\big(\OO(SL_3)[E'^{-1}]\big),\]
since $W_{23}^{-1}$, $W_{32}^{-1}$ do not appear in any central monomials of $B[E'^{-1}]$ by Remark~\ref{rem:centers of slightly localized SL3}.

The result now follows.
\end{proof}

We continue to write $A:=\OO_q(SL_3)$ and $R:=\OO(SL_3)$.  For each pair of $\HH$-primes $J \subset K$, we have seen that there are isomorphisms $\Theta_J: \ZZ(A_J) \longrightarrow \PZ(R_J)$ (Proposition~\ref{res:centres of h simple localizations are the same}) and $\Theta_{JK}: \ZZ_{JK} \longrightarrow \PZ_{JK}$ (Theorem~\ref{res:main theorem on equality of intermediate spaces}), where the action of $\Theta_{\bullet}$ in each case is simply to replace each quantum minor in a (specified) generating set with the corresponding commutative minor.  

This allows us to identify each pair of algebras $\ZZ(A_J)$ and $\PZ(R_J)$, and similarly $\ZZ_{JK}$ and $\PZ_{JK}$; this induces a natural homeomorphism between the spectra of each pair of algebras as well, which by an abuse of notation we will also denote by $\Theta_J$ (resp. $\Theta_{JK}$).  The following corollary now follows immediately from the definitions of the maps $f_{JK}$ and $g_{JK}$.

\begin{corollary}\label{res:commutativity of middle square}
Let $J \subset K$ be a pair of $\HH$-primes in $A$ (resp. $R$), and let $Y$ be a closed set in $spec(\ZZ(A_J))$.  Then
\[f_{JK}^{\circ}\overline{|}g_{JK}^{\circ} \circ \Theta_J (Y) = \Theta_K \circ f_{JK}^{\circ}\overline{|}g_{JK}^{\circ}(Y).\]
\end{corollary}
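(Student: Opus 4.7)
The plan is to lift the statement to the level of commutative $k$-algebras, verify a commuting rectangle of algebra homomorphisms there, and then descend to spectra. The crux is to show that
\[
\begin{array}{ccccc}
\ZZ(A_J) & \xleftarrow{\;g_{JK}\;} & \ZZ_{JK} & \xrightarrow{\;f_{JK}\;} & \ZZ(A_K)\\
\Theta_J\Big\downarrow & & \Theta_{JK}\Big\downarrow & & \Theta_K\Big\downarrow\\
\PZ(R_J) & \xleftarrow{\;g_{JK}\;} & \PZ_{JK} & \xrightarrow{\;f_{JK}\;} & \PZ(R_K)
\end{array}
\]
commutes as a diagram of $k$-algebra homomorphisms, where the top row consists of the quantum versions of $f_{JK}, g_{JK}$ and the bottom row of their Poisson counterparts.

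For the left square, both copies of $g_{JK}$ are natural inclusions (see \eqref{eq:map g}); by Theorem~\ref{res:main theorem on equality of intermediate spaces} the generators of $\ZZ_{JK}$ are products of quantum minors, and these same products appear in the generating set of $\ZZ(A_J)$ prescribed in Proposition~\ref{res:centres of h simple localizations are the same}. Since each of $\Theta_J$ and $\Theta_{JK}$ acts by sending every such product to the corresponding product of commutative minors, the two paths around the square agree on the generators of $\ZZ_{JK}$ and hence as algebra homomorphisms. For the right square, each copy of $f_{JK}$ is induced by the projection modulo $K$ (Lemma~\ref{res:definition of f}); this projection sends any single (quantum or commutative) minor either to zero or to the ``same'' minor in the quotient, which is manifestly compatible with $\theta$, so the right square also commutes on generators and therefore as homomorphisms.

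Applying $\mathrm{Spec}$ then yields the analogous commutative diagram of affine schemes, with the vertical maps now the homeomorphisms I continue to denote $\Theta_J, \Theta_{JK}, \Theta_K$. This in particular gives $\Theta_{JK} \circ g_{JK}^{\circ} = g_{JK}^{\circ} \circ \Theta_J$ and $\Theta_K \circ (f_{JK}^{\circ})^{-1} = (f_{JK}^{\circ})^{-1} \circ \Theta_{JK}$ on subsets of $\mathrm{spec}(\ZZ_{JK})$ (read with the appropriate quantum/Poisson interpretations on each side), and $\Theta_{JK}$ preserves closures. Unwinding Definition~\ref{def:exciting map of interest}, the claim then follows from the chain
\begin{align*}
\Theta_K\bigl(f_{JK}^{\circ}\,\overline{|}\,g_{JK}^{\circ}(Y)\bigr)
&= \Theta_K\bigl((f_{JK}^{\circ})^{-1}(\overline{g_{JK}^{\circ}(Y)})\bigr)\\
&= (f_{JK}^{\circ})^{-1}\bigl(\Theta_{JK}(\overline{g_{JK}^{\circ}(Y)})\bigr)\\
&= (f_{JK}^{\circ})^{-1}\bigl(\overline{\Theta_{JK}(g_{JK}^{\circ}(Y))}\bigr)\\
&= (f_{JK}^{\circ})^{-1}\bigl(\overline{g_{JK}^{\circ}(\Theta_J(Y))}\bigr)
 = f_{JK}^{\circ}\,\overline{|}\,g_{JK}^{\circ}\bigl(\Theta_J(Y)\bigr),
\end{align*}
where on each line the appropriate (quantum or Poisson) interpretation of $f_{JK}^{\circ}, g_{JK}^{\circ}$ is forced by its domain and codomain. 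The main obstacle is simply the bookkeeping required to keep quantum and Poisson versions of the maps separate; every individual step is forced by the explicit constructions of $\Theta_J, \Theta_{JK}, \Theta_K$ given in Proposition~\ref{res:centres of h simple localizations are the same} and Theorem~\ref{res:main theorem on equality of intermediate spaces}.
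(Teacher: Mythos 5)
Your argument is correct and elaborates precisely what the paper asserts ``follows immediately from the definitions of the maps $f_{JK}$ and $g_{JK}$''; lifting to a commuting rectangle of algebra homomorphisms and then descending via $\mathrm{Spec}$ is exactly the right route, and the final chain of equalities is sound. One small imprecision in your left square: the chosen generators of $\ZZ_{JK}$ need not literally appear among the generators of $\ZZ(A_J)$ listed in Proposition~\ref{res:centres of h simple localizations are the same} (they may differ by inversion or regrouping of factors); what actually makes that square commute is that $\Theta_J$ and $\Theta_{JK}$ are both restrictions of the same substitution $\theta$ on products of quantum minors, and this substitution is well defined on the relevant elements because $\ZZ(A_J)$ and $\ZZ_{JK}$ are commutative Laurent polynomial rings, so the value of $\theta$ on a central monomial does not depend on how that monomial is factored into quantum minors.
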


We introduce a few final pieces of notation in order to make the statement of the final theorem precise.  For each $\HH$-prime $J \in \HH$-$spec(A)$, let $h_J^q$ denote the homeomorphism $spec_J(A) \approx spec(\ZZ(A_J))$ from the Stratification Theorem, and similarly for each Poisson $\HH$-prime $J \in \HH$-$pspec(R)$ let $h_J^p$ denote the homeomorphism $pspec_J(R) \approx spec(\PZ(R_J))$.  Since there are now many different maps to keep track of, ranging from homomorphisms to homeomorphisms to maps on closed sets, Figure~\ref{fig:magnificent diagram} may help to highlight the most relevant ones and how they fit together.

\begin{theorem}\label{res:theorem homeomorphism}
Let $A := \OO_q(SL_3)$ and $R:=\OO(SL_3)$, and assume that $k$ is an algebraically closed field of characteristic zero and $q \in k^{\times}$ is not a root of unity.  For each $J \in \HH$-$spec(A)$, define a map
\[\psi_J: spec_J(A) \longrightarrow pspec_J(R): P \mapsto (h_J^p)^{-1} \circ \Theta_J \circ h_J^q(P),\]
and let $\psi$ be the map $spec(A) \longrightarrow pspec(R)$ defined by
\[\psi|_{spec_J(A)} = \psi_J \quad \textrm{ for each } J \in \HH\textrm{-}spec(A).\]  
Then $\psi$ is a bijection, and both $\psi$ and $\psi^{-1}$ preserve inclusions, i.e. $\psi$ is a homeomorphism with respect to the Zariski topology.
\end{theorem}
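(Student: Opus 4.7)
The plan is to use the finite stratification framework (Lemma~\ref{res:properties of finite stratification}) on both sides of $\psi$: a subset is closed if and only if its intersection with each stratum is closed and the stratum-transition maps $\varphi_{JK}$ send these intersections into the corresponding closed subsets of later strata. To show $\psi$ is a homeomorphism, it therefore suffices to establish (a) that $\psi$ restricts to a homeomorphism between corresponding strata, and (b) that $\psi$ intertwines the maps $\varphi_{JK}$ on the two sides. Condition (a) takes care of within-stratum inclusions; condition (b) handles inclusions between different strata, which is where the real content lies.

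First I would record the evident bijection between $\HH$-$spec(A)$ and $\HH$-$pspec(R)$ coming from the common generating sets of Figure~\ref{fig:H_primes_gens}; this bijection is inclusion-preserving, so the two stratifications share a common index poset. Bijectivity of $\psi$ is then immediate, since each $\psi_J = (h_J^p)^{-1}\circ\Theta_J\circ h_J^q$ is a composition of three bijections: the Stratification Theorem homeomorphisms $h_J^q$, $h_J^p$, and the algebra isomorphism $\Theta_J$ from Proposition~\ref{res:centres of h simple localizations are the same}. All three are in fact homeomorphisms of the relevant strata, settling (a).

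For (b), the main task, I would show that for every pair $J \subsetneq K$ of $\HH$-primes,
\[\psi_K \circ \varphi^q_{JK} \;=\; \varphi^p_{JK} \circ \psi_J\]
as maps on closed sets (here the superscripts distinguish the quantum and Poisson $\varphi_{JK}$). This breaks into three inputs, each already established: the quantum identity $\varphi^q_{JK} = f_{JK}^{\circ}\overline{|}g_{JK}^{\circ}$ for $\OO_q(SL_3)$, verified in \cite[\S7]{GBrown}; its Poisson counterpart in Corollary~\ref{res:Poisson conjecture holds for SL3}; and the commuting square $f_{JK}^{\circ}\overline{|}g_{JK}^{\circ}\circ\Theta_J = \Theta_K\circ f_{JK}^{\circ}\overline{|}g_{JK}^{\circ}$ from Corollary~\ref{res:commutativity of middle square}. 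Splicing these together with the definition of $\psi_J$ yields the desired intertwining.

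Applying Lemma~\ref{res:properties of finite stratification}(3) in each direction then shows that $X \subseteq spec(A)$ is closed if and only if $\psi(X)$ is closed in $pspec(R)$, so $\psi$ is a homeomorphism and both $\psi$, $\psi^{-1}$ preserve inclusions. The restriction to primitive ideals (Corollary~\ref{res:homeomorphism sl3 primitives theorem}) drops out from Theorem~\ref{res:poisson stratification}(3), since primitives within a stratum correspond to maximal ideals of $\ZZ(A_J)$ respectively $\PZ(R_J)$, and $\Theta_J$ is an algebra isomorphism. The main obstacle is conceptual bookkeeping rather than new computation: one must verify that the stratum-by-stratum homeomorphisms $\psi_J$, which were defined independently using the specific generating sets of Proposition~\ref{res:centres of h simple localizations are the same} and Theorem~\ref{res:main theorem on equality of intermediate spaces}, actually glue into a single map whose compatibility with the $\varphi_{JK}$ holds uniformly across \emph{all} pairs $J \subsetneq K$ --- but this global compatibility is exactly what Corollary~\ref{res:commutativity of middle square} has packaged.
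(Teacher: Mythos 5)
Your proposal is correct and follows essentially the same route as the paper: both arguments hinge on the intertwining identity $\psi_K \circ \varphi_{JK} = \varphi_{JK} \circ \psi_J$, obtained by splicing the quantum equality $\varphi_{JK}=f_{JK}^{\circ}\overline{|}g_{JK}^{\circ}$ from \cite[\S7]{GBrown}, its Poisson counterpart (Corollary~\ref{res:Poisson conjecture holds for SL3}), and the commuting middle square (Corollary~\ref{res:commutativity of middle square}). The only cosmetic difference is that you phrase the conclusion via the closedness criterion of Lemma~\ref{res:properties of finite stratification}(3), whereas the paper checks directly that $\psi(V(P)) = V(\psi(P))$ by decomposing $V(P)$ into its intersections with the strata above $J$; these are equivalent ways of packaging the same computation.
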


\begin{proof}
It is clear that $\psi$ is a bijection, since it is patched together from the homeomorphisms $\psi_J$.  To prove that $\psi$ and $\psi^{-1}$ preserve inclusions, it is enough to check that $\psi(V(P)) = V(\psi(P))$ for each $P \in spec(A)$.  So we fix some $P \in spec(A)$, and let $J$ be the $\HH$-prime such that $P \in spec_J(A)$.  Now for any $\HH$-prime $K \supseteq J$, we have
\begin{align*}
\psi\big(V(P)\cap spec_K(A)\big) &= \psi_K \circ \varphi_{JK}\Big(V(P) \cap spec_J(A)\Big)\\
&= \varphi_{JK} \circ \psi_J\Big(V(P) \cap spec_J(A)\Big),
\end{align*}
where the first equality follows from the definition of $\varphi_{JK}$ and the second from the fact that if we view all of the maps in Figure~\ref{fig:magnificent diagram} as maps between spaces of closed sets (so we're forgetting some of the homeomorphism structure of the vertical maps), each square of the diagram commutes (by \cite[Theorem~5.3]{GBrown}, Corollary~\ref{res:commutativity of middle square} and Corollary~\ref{res:Poisson conjecture holds for SL3} respectively).

Now since $\psi_J$ is a homeomorphism $spec_J(A)\longrightarrow pspec_J(R)$, we obtain
\begin{align*}\varphi_{JK} \circ \psi_J\Big(V(P) \cap spec_J(A)\Big) &= \varphi_{JK}\Big(V(\psi(P))\cap pspec_J(R)\Big) \\
&= V(\psi(P)) \cap pspec_K(R),
\end{align*}
and the result follows since $V(P)$ is the disjoint union of the $V(P) \cap spec_K(A)$ for $K \supseteq J$, and similarly $V(\psi(P))$ is the disjoint union of the $V(\psi(P)) \cap pspec_K(A)$.
\end{proof}

\begin{figure}[t]
\begin{tikzpicture}[auto]
\node (1) at (0,0) {$spec_J(A)$};
\node (2) at (6,0) {$spec_K(A)$};
\node (3) at (0,-2) {$spec(\ZZ(A_J))$};
\node (4) at (6,-2) {$spec(\ZZ(A_K))$};
\node (7) at (0,-5) {$spec(\PZ(R_J))$};
\node (8) at (6,-5) {$spec(\PZ(R_K))$};
\node (9) at (0,-7) {$pspec_J(R)$};
\node (10) at (6,-7) {$pspec_K(R)$};
\draw [dashed,->] (1) to node {$\varphi_{JK}$} (2);
\draw [->] (1) to node[swap] {$h_J^q$} (3);
\draw [->] (2) to node {$h_K^q$} (4);
\draw [dashed,->] (3) to node {$f_{JK}^{\circ}\overline{|}g_{JK}^{\circ}$} (4);
\draw [->] (3) to node[swap] {$\Theta_J$} (7);
\draw [->] (4) to node {$\Theta_K$} (8);
\draw [dashed,->] (7) to node[swap] {$f_{JK}^{\circ}\overline{|}g_{JK}^{\circ}$} (8);
\draw [<-] (7) to node[swap] {$h_J^p$} (9);
\draw [<-] (8) to node {$h_K^p$} (10);
\draw [dashed,->] (9) to node[swap] {$\varphi_{JK}$} (10);
\end{tikzpicture}
\caption{Some of the maps between the quantum and Poisson strata corresponding to a pair of $\HH$-primes $J \subset K$.  The unbroken arrows denote homeomorphisms, while a dashed arrow indicates a map from the closed sets of the source to closed sets in the target (both w.r.t. the Zariski topology).}\label{fig:magnificent diagram}
\end{figure}

We may also restrict our attention to just the primitive ideals, where we can describe the action of this homeomorphism explicity as follows.

\begin{corollary}\label{res:homeomorphism sl3 primitives theorem}
There is a homeomorphism
\[\psi: prim(\OO_q(SL_3)) \longrightarrow pprim(\OO(SL_3)),\]
obtained by restricting the homeomorphism of Theorem~\ref{res:theorem homeomorphism} to the space of primitive (resp. Poisson primitive) ideals.  Further, for each primitive ideal $P \in prim(\OO_q(SL_3))$, we can find a generating set
\[P = \langle U_1 - \lambda_d V_1, \dots, U_d - \lambda_d V_d \rangle, \qquad \lambda_1,\dots,\lambda_d \in k^{\times},\]
with each $U_i$ and $V_i$ products of quantum minors (we take the empty product of minors to be 1), such that
\[\psi(P) = \big\langle \theta(U_1) - \lambda_1\theta(V_1), \dots, \theta(U_d) - \lambda_d\theta(V_d)\big\rangle.\]
\end{corollary}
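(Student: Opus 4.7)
The plan is to restrict the homeomorphism $\psi$ from Theorem~\ref{res:theorem homeomorphism} to the primitive spectra and then read off the generating sets using the explicit form of $h_J^q$, $\Theta_J$, and $h_J^p$ on each stratum.

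For the first assertion, I would use the Dixmier-Moeglin equivalences on both sides: Theorem~\ref{res:poisson stratification}(iii) gives $pprim_J(R) \approx max(\PZ(R_J))$ and the quantum Stratification Theorem \cite[Theorem II.8.4]{GBbook} gives $prim_J(A) \approx max(\ZZ(A_J))$. Since $\Theta_J$ is an isomorphism of commutative $k$-algebras by Proposition~\ref{res:centres of h simple localizations are the same}, it identifies maximal spectra, so $\psi_J$ restricts to a bijection $prim_J(A) \to pprim_J(R)$ for each $\HH$-prime $J$. Taking the union over $J$ gives the set-theoretic bijection $prim(A) \to pprim(R)$, and since $prim(A)$ and $pprim(R)$ inherit their topologies from $spec(A)$ and $pspec(R)$ as subspaces, the restriction of the homeomorphism of Theorem~\ref{res:theorem homeomorphism} is again a homeomorphism.

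For the explicit generators, fix $P \in prim_J(A)$. By Proposition~\ref{res:centres of h simple localizations are the same}, $\ZZ(A_J) = k[Z_1^{\pm 1}, \dots, Z_d^{\pm 1}]$ with each $Z_i$ a product of quantum minors and their inverses, and since $k$ is algebraically closed we have $h_J^q(P) = \langle Z_1 - \lambda_1, \dots, Z_d - \lambda_d\rangle$ for some $\lambda_i \in k^{\times}$. Writing $Z_i = U_i V_i^{-1}$ with $U_i$ and $V_i$ honest products of quantum minors (the inverted factors gathered into $V_i$), this ideal equals $\langle U_i - \lambda_i V_i : 1 \leq i \leq d\rangle$ in $\ZZ(A_J)$. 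The key technical point is that each $V_i$ is normal in $A/J$ (this is part of the analysis carried out in \cite[\S7]{GBrown}); since $Z_i = U_i V_i^{-1}$ is central in $A_J$, a short computation (the noncommutative analog of the identity $\{a,U_i\}V_i = \{a,V_i\}U_i$ used in the proof of Theorem~\ref{res:Poisson normal generators for SL3}) shows $U_i$ is also normal in $A/J$ with the same conjugation automorphism as $V_i$, and consequently each $U_i - \lambda_i V_i$ is normal. The ideal $N := \langle U_i - \lambda_i V_i : i\rangle$ of $A/J$ is then contained in $P/J$, and extends to the same ideal of $A_J$ as $P/J$ does (both extending to $M \cdot A_J$, where $M = h_J^q(P)$); a standard contraction argument for normal ideals forces $N = P/J$.

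Finally, applying $\Theta_J$ to $h_J^q(P)$ yields the maximal ideal $\langle \theta(U_1) - \lambda_1 \theta(V_1), \dots, \theta(U_d) - \lambda_d \theta(V_d)\rangle$ of $\PZ(R_J)$, and the Poisson analog of the previous paragraph --- which is precisely Theorem~\ref{res:Poisson normal generators for SL3} combined with Table~\ref{tab:gens for primitives} --- identifies this with $\psi(P)/J$ as an ideal of $R/J$ with the claimed generating set. The main potential obstacle is the verification that the chosen $V_i$ are normal in $A/J$ for each of the 36 $\HH$-primes, but this is already part of the case-by-case analysis carried out in \cite[\S7]{GBrown} when establishing Conjecture~\ref{conj:GB conjecture} for $\OO_q(SL_3)$; the remainder of the proof is then a bookkeeping exercise tracking products of minors through the map $\theta$.
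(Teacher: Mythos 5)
Your first paragraph is correct and essentially reconstructs the content of \cite[Lemma~9.4(c)]{GoodearlSummary}, which the paper simply cites; either route works.

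The difficulty is in your second paragraph, at the step where you assert that ``a standard contraction argument for normal ideals forces $N = P/J$.'' No such general argument exists. From $N \subseteq P/J$ and $NA_J = (P/J)A_J$ you can only conclude that the \emph{saturation} of $N$ with respect to the localizing set equals $P/J$; to get $N = P/J$ you would need to know that $N$ is already contracted (i.e.\ $N = (NA_J) \cap (A/J)$), and generation by normal elements does not imply this --- even in the commutative setting, $\langle xz\rangle$ and $\langle x\rangle$ in $k[x,y,z]$ extend to the same ideal after inverting $z$. The paper is explicit about this: just before Theorem~\ref{res:Poisson normal generators for SL3} it states that ``as yet there is no general approach to proving that this ideal is actually $P_\lambda$ itself,'' and that the verification is done by case-by-case analysis in \cite{MeThesis} (Poisson side) and in \cite{GL1} on the quantum side. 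The paper's own proof of the corollary therefore does not attempt to re-derive the generating sets; it simply quotes the already-established explicit generating sets from \cite[Figure~7]{GL1} and Table~\ref{tab:gens for primitives} and observes that $\theta$ matches them up. You do eventually invoke Table~\ref{tab:gens for primitives} for the Poisson side, but on the quantum side you need an analogous citation rather than the contraction argument, which as written has a genuine hole.
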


\begin{proof}
By \cite[Lemma~9.4(c)]{GoodearlSummary}, the homeomorphism from Theorem~\ref{res:theorem homeomorphism} restricts to a homeomorphism (which we also denote by $\psi$) on the primitive level.  Generating sets for the primitive (respectively Poisson primitive) ideals of $\OO_q(SL_3)$ (resp. $\OO(SL_3)$) are given in \cite[Figure~7]{GL1} (resp. Table~\ref{tab:gens for primitives}), and the second part of the result now follows easily.
\end{proof}

\begin{corollary}\label{res:homeomorphism theorem gl2}
The topological spaces $spec(\OO_q(GL_2))$ and $pspec(\OO(GL_2))$ are homeomorphic, and this homeomorphism restricts to a homeomorphism $prim(\OO_q(GL_2)) \approx pprim(\OO(GL_2))$.
\end{corollary}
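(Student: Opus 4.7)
The plan is to deduce this corollary from Theorem~\ref{res:theorem homeomorphism} by identifying $\OO_q(GL_2)$ and $\OO(GL_2)$ with quotients of $\OO_q(SL_3)$ and $\OO(SL_3)$ respectively, and then restricting the homeomorphism $\psi$ to the closed subspace corresponding to $V(J_{132,132})$.

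First, I would invoke the isomorphisms already noted in the proof of Corollary~\ref{res:Poisson conjecture holds for GL2}: there is an isomorphism of $k$-algebras $\OO_q(SL_3)/J_{132,132} \cong \OO_q(GL_2)$ (the quantum analogue of the Poisson isomorphism, cf.\ \cite{LS1}), which pairs with the Poisson isomorphism $\OO(SL_3)/J_{132,132} \cong \OO(GL_2)$ sending $X_{ij}$ or $Y_{ij}$ to $a,b,c,d,\Delta^{-1}$ in the same pattern. These isomorphisms induce homeomorphisms of topological spaces
\[
\mathrm{spec}(\OO_q(GL_2)) \approx V(J_{132,132}) \subseteq \mathrm{spec}(\OO_q(SL_3)), \qquad \mathrm{pspec}(\OO(GL_2)) \approx V(J_{132,132}) \subseteq \mathrm{pspec}(\OO(SL_3)),
\]
and likewise on the (Poisson) primitive level.

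Next, I would observe that the homeomorphism $\psi$ of Theorem~\ref{res:theorem homeomorphism} preserves $\HH$-strata by construction (since $\psi|_{\mathrm{spec}_L(A)} = \psi_L$ maps into $\mathrm{pspec}_L(R)$), and in particular $\psi$ sends the $\HH$-prime $J_{132,132}$ of $A$ to the Poisson $\HH$-prime $J_{132,132}$ of $R$, because each is the unique element of its singleton stratum consisting of just the $\HH$-prime itself (equivalently, each is fixed by the map $\Theta_{J_{132,132}}$ acting on the identity of $\ZZ(A_{J_{132,132}})$). Since $\psi$ is inclusion-preserving in both directions, it restricts to a bijection $V(J_{132,132}) \to V(J_{132,132})$ which is still a homeomorphism (the Zariski topologies on $V(J_{132,132})$ in each spectrum are just the subspace topologies).

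Finally, chasing the above identifications produces a homeomorphism
\[
\psi' \colon \mathrm{spec}(\OO_q(GL_2)) \longrightarrow \mathrm{pspec}(\OO(GL_2)),
\]
and the primitive version follows from Corollary~\ref{res:homeomorphism sl3 primitives theorem} by noting that primitivity is preserved by the quotient/isomorphism correspondence, or alternatively by invoking \cite[Lemma~9.4(c)]{GoodearlSummary} directly for $\OO_q(GL_2)$ and $\OO(GL_2)$. There is essentially no hard step: the only thing to be careful about is verifying that the $\HH$-prime $J_{132,132}$ is sent to the correct Poisson $\HH$-prime under $\psi$, which is immediate from the definition of $\psi$ and the fact that both $\HH$-primes sit as minimal elements of their respective stratifications with trivial (Poisson) centre quotient (Cases~II/III of Theorem~\ref{res:main theorem on equality of intermediate spaces}).
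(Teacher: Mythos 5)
Your overall plan is exactly the paper's: identify $\OO_q(GL_2)$ and $\OO(GL_2)$ with $\OO_q(SL_3)/I_{132,132}$ and $\OO(SL_3)/I_{132,132}$, identify their spectra with $V(I_{132,132})$, and note that $\psi$ preserves strata and inclusions so that it restricts to a homeomorphism of $V(I_{132,132})$. The paper's proof is terse but says precisely this.

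However, your justification for the step $\psi(I_{132,132}) = I_{132,132}$ contains two incorrect claims. You assert that $I_{132,132}$ ``is the unique element of its singleton stratum,'' and later that both $\HH$-primes have ``trivial (Poisson) centre quotient.'' Neither is true: from Table~\ref{tab:gens of centre}, the centre $\PZ(R_{I_{132,132}})$ is a Laurent polynomial ring in two indeterminates ($Y_{11}$ and $Y_{23}Y_{32}^{-1}$), so $\mathrm{pspec}_{I_{132,132}}(R)$ is a two-dimensional stratum, far from a singleton; similarly on the quantum side. Case~III of Theorem~\ref{res:main theorem on equality of intermediate spaces} is about the intermediate algebras $\ZZ_{JK}$, $\PZ_{JK}$, not about $\ZZ(A_J)$ or $\PZ(R_J)$, so it does not support the claim either. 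The fact you want is easy, but follows differently: by definition $\psi|_{\mathrm{spec}_J(A)} = \psi_J = (h_J^p)^{-1} \circ \Theta_J \circ h_J^q$; the stratification homeomorphism $h_J^q$ sends $J$ to the zero ideal of $\ZZ(A_J)$ (since $J$ is the unique minimal prime of the stratum), the algebra isomorphism $\Theta_J$ sends the zero ideal to the zero ideal, and $(h_J^p)^{-1}$ sends it back to $J$. That gives $\psi(J) = J$ for \emph{every} $\HH$-prime $J$, with no need for anything special about $I_{132,132}$. With that fix, the rest of your argument goes through as written.
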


\begin{proof}
As observed in Corollary~\ref{res:Poisson conjecture holds for GL2}, there is an isomorphism of Poisson algebras $\OO(GL_2) \cong \OO(SL_3)/I_{132,132}$, and similarly $\OO_q(GL_2) \cong \OO_q(SL_3)/I_{132,132}$.  The result now follows from Theorem~\ref{res:theorem homeomorphism} and Corollary~\ref{res:homeomorphism sl3 primitives theorem}.
\end{proof}

\textbf{Acknowledgements.} I would like to thank Toby Stafford, Ken Goodearl and St\'ephane Launois for many helpful discussions, comments and explanations, and the referee for their careful reading and suggestions which greatly improved the text.  This research was supported by an EPSRC Doctoral Prize fellowship at the University of Leeds.

\end{document}